\theoremstyle{definition}
\newtheorem{defi}{Definition}[section]
\newtheorem{example}[defi]{Example}
\newtheorem{remark}[defi]{Remark}
\theoremstyle{plain}
\newtheorem{theorem}[defi]{Theorem}
 \newtheorem{prop}[defi]{Proposition}
\newtheorem{lemma}[defi]{Lemma}
\newtheorem{cor}[defi]{Corollary}
\newtheorem{porism}[defi]{Porism}
\numberwithin{equation}{section}
\let\TagsLeftOn\tagsleft@true
\let\TagsLeftOff\tagsleft@false
\newcommand{\B}{\mathbb B}
\newcommand{\D}{\mathbb{D}}
\newcommand{\R}{\mathbb R}
\newcommand{\N}{\mathbb N}
\newcommand{\Z}{\mathbb Z}
\newcommand{\C}{\mathbb{C}}
\newcommand{\E}{\mathbb {E}}
\newcommand{\F}{\mathbb F}
\newcommand{\de}{\, \mathrm{d}}
\newcommand{\del}{\partial}
\newcommand{\Vol}{\operatorname{Vol}}
\newcommand{\CA}{\mathcal{A}}
\newcommand{\CF}{\mathcal{F}}
\newcommand{\CS}{\mathcal{S}}
\newcommand{\CX}{\mathcal{X}}
\newcommand {\bx}{\mathbf x}
\newcommand {\be}{\mathbf e}
\newcommand {\bk}{\mathbf k}
\newcommand {\by}{\mathbf y}
\newcommand {\bs}{\mathbf z}
\newcommand {\bu}{\mathbf u}
\newcommand {\bn}{\mathbf n}
\newcommand {\bxi}{\boldsymbol \xi}
\newcommand{\fd}{s}
\newcommand {\RL}{\mathrm L}
\newcommand{\RM}{\mathrm M}
\newcommand{\RC}{\mathrm C}
\newcommand{\dwn}{\triangledown}
\newcommand{\up}{\triangle}
\newcommand{\lef}{\triangleleft}
\newcommand{\abs}[1]{\left\lvert #1 \right\rvert}
\newcommand{\set}[1]{\left\{ #1 \right\}}
\newcommand{\norm}[1]{\left\| #1 \right\|}
\newcommand{\bo}\boldsymbol{}
\newcommand{\bigo}[2][]{O_{#1}\left( #2 \right)}
\newcommand{\smallo}[2][]{o_{#1}\left( #2 \right)}
\newcommand{\qand}{\quad \text{and}\quad}
\DeclareMathOperator{\inte}{int}
\DeclareMathOperator{\dist}{dist}
\DeclareMathOperator{\spn}{span}
\DeclareMathOperator{\diam}{diam}
\DeclareMathOperator{\vol}{vol}
\DeclareMathOperator{\covol}{covol}
\DeclareMathOperator{\grad}{grad}
\renewcommand{\le}{\leqslant}
\renewcommand{\ge}{\geqslant}
\renewcommand{\tilde}{\widetilde}
\newcommand{\eps}{\varepsilon}
\renewcommand{\phi}{\varphi}
\DeclareMathOperator{\supp}{supp}
\renewcommand{\mod}[1]{\,({\rm mod}\,#1)}
\DeclareMathOperator{\GL}{GL}
\renewcommand{\div}{\operatorname{div}}
\renewcommand\footnotemark{}
\title{Appendix A: $\RL^2$ bounds for the discrepancy of Liouville cut and project sets}
\author{Michael Bj\"orklund \thanks{\textbf{MB:} Department of Mathematics, Chalmers,
        Gothenburg, Sweden 
    \href{mailto:micbjo@chalmers.se}{\nolinkurl{micbjo@chalmers.se}}}
\and
Tobias Hartnick \thanks{\textbf{TH:} Institut f\"ur Algebra und Geometrie, KIT, Karlsruhe,
    Germany 
\href{mailto:tobias.hartnick@kit.edu}{\nolinkurl{tobias.hartnick@kit.edu}}}}
\let\appendxtitle\@title
\let\appendxauthor\@author
\renewcommand\@maketitle{%
  \newpage
  \null
  \vskip 2em%
  \begin{center}%
  \let \footnote \thanks
    {\LARGE \@title \par}%
    \vskip 1.5em%
    {\large
      \lineskip .5em%
      \begin{tabular}[t]{c}%
        \@author
      \end{tabular}\par}%
      \vskip 1.5em%
    {\large With an appendix by \par}%
    \vskip 1em%
    {\large
      \lineskip .5em%
      \begin{tabular}[t]{c}%
        \appendxauthor%
      \end{tabular}\par}%
      \vskip 1.5em%
  \end{center}%
  \par
  }
\author{Michael Bj\"orklund
\and
Tobias Hartnick}
\let\appendxauthorbis\@author
\def\makeappendixtitle{%
  \newpage
  \null
  \vskip 2em%
  \begin{center}%
  \let \footnote \thanks
    {\LARGE \appendxtitle \par}%
    \vskip 1.5em%
    {\large
      \lineskip .5em%
      \begin{tabular}[t]{c}%
        \appendxauthorbis
      \end{tabular}\par}%
      \vskip 1.5em%
  \end{center}%
  \par%
  }%
\begin{document}

\title{
    \begin{centering}
    Sharp density discrepancy for cut and project sets \\ An approach via lattice point counting 
\end{centering}
\footnote{\textbf{Keywords:} Cut and project sets, model sets, aperiodic order,
lattice point counting, diophantine approximation, discrepancy}
\footnote{\textbf{MSC(2020):} Primary: 52C23, 11P21, 11H06. Secondary: 52C45.}
}

\author{Henna Koivusalo \thanks{\textbf{HK:} School of Mathematics, University of Bristol, Bristol United Kingdom; 
        \href{mailto:henna.koivusalo@bristol.ac.uk}{\nolinkurl{henna.koivusalo@bristol.ac.uk}}}
\and
Jean Lagac\'e \thanks{\textbf{JL}: Department of Mathematics, King's College London, London, United Kingdom; 
\href{mailto:jean.lagace@kcl.ac.uk}{\nolinkurl{jean.lagace@kcl.ac.uk}}}}

\date{}

\maketitle

\abstract{
Cut and project sets
    are obtained by taking an irrational slice of a lattice and projecting it to
    a lower dimensional subspace.
    We seek to quantify fluctuations from the asymptotic mean for point counts. We obtain uniform upper
    bounds on the discrepancy depending on the diophantine properties of the
    lattice.
    In an appendix, Michael Björklund and Tobias Hartnick obtain lower bounds on
    the $\RL^2$ norm of the discrepancy also depending on the diophantine class; these lower bounds match our uniform upper
    bounds and both are therefore sharp. 
    This also allows us to find sharp bounds
    for the number of lattice points in thin slabs.
    Using  a sufficient criteria of Burago--Kleiner and
    Aliste-Prieto--Coronel--Gambaudo we find an explicit full-measure class of
    cut and project sets that are biLipschitz equivalent to
    lattices; our lower bounds 
    indicate that this is the largest
    class of cut and project sets for which those criteria can apply.
 }

\thispagestyle{plain}
\pagestyle{mystyle}

\section{Introduction}
\subsection{Cut and project sets}
A \emph{cut and project set} $\Lambda$ is a discrete subset of euclidean space
 which, despite being aperiodic, still exhibits repetitive structure and
long-term order. Fix an orthogonal decomposition 
\begin{equation}
    \label{eq:decomposition}
\R^d \cong \E := \E_\dwn \oplus
\E_\lef \cong \R^{d_\dwn} \oplus \R^{d_\lef},
\end{equation}
with associated projections
$\pi_\dwn$ and $\pi_\lef$. Fix a lattice $\Gamma \subset
\E$, $\bs \in \E$ and a bounded
\emph{window} $\Omega_\lef \subset \E_\lef$. The cut and project set associated with this
data is 
\begin{equation}
    \Lambda := \Lambda(\E,\E_\dwn,\E_\lef;\Gamma + \bs;\Omega_\lef) := \pi_\dwn((\Gamma + \bs) \cap
    \pi_\lef^{-1}(\Omega_\lef));
\end{equation}
it is said to be \emph{regular} if $\Omega_\lef$ is an open set whose boundary
has zero measure, and $s$-\emph{regular} if, in addition $\del\Omega_\lef$ has a
finite $(d_\lef-s)$-Minkowski content for some $0<s\le1$ (for definitions, see Section \ref{sec:conditions}). We also make the standard assumption that $\pi_\lef(\Gamma)$ is dense in $\E_\lef$ and
$\pi_\dwn\big|_{\Gamma}$ is injective; we give a characterisation of these via \emph{complete irrationality} of $\Gamma$ in Section
\ref{sec:latticesdef}.
Since the decomposition \eqref{eq:decomposition} is fixed we generally drop them from the
notation and write $\Lambda(\Gamma + \bs, \Omega_\lef)$. 
Cut and project sets have received an increasing amount of interest of late as
mathematical models for \emph{quasicrystals}. 
Also referred
to as \emph{model sets}, they originated in mathematics as 
 generalisations of lattices by Meyer in 1970 \cite{Meyer70}.

Many of the standard questions about lattices extend naturally to
cut and project sets; amongst which is the counting problem consisting of finding a
good asymptotic description for the quantity $\#(\Lambda \cap \Omega_\dwn)$
where the search region $\Omega_\dwn
\subset \E_\dwn$ is large, for example a large ball $\B_\dwn(0,t)\subset
\E_\dwn$ or cube $[0,t]^{d_\lef} \subset \E_\dwn$. It is well-known (see, for example, \cite{HaynesKoivusaloWaltonSadun16}, also see \cite[Theorem 7.2]{BaakeGrimmBook}) that 
\begin{equation}
    \label{eqdef:discrepcandp}
    \#(\Lambda \cap \B_\dwn(0,t)) =
    \frac{\vol(\B_\dwn(0,1))\vol(\Omega_\lef)}{\covol(\Gamma)} t^{d_\dwn} +
    \Delta(\Lambda;\B_\dwn(0,t)),
\end{equation}
where the discrepancy is smaller than the main term, i.e.
$\Delta(\Lambda;\B_\dwn(0,t)) =
\smallo{t^{d_\dwn}}$ for every $\Lambda$. Here, the volumes are computed in the
appropriate subspace and the covolume of a lattice $\Gamma$ is defined as
$\covol(\Gamma) := \vol(\E/\Gamma)$. 

The purpose of the current work is to find estimates for the discrepancy $\Delta$
depending on the search region $\Omega_\dwn$, the window $\Omega_\lef$ and the diophantine properties of
$\Gamma$. We establish a direct,
explicit link between a quantifiable diophantine property of the lattice and a
discrepancy bound for the corresponding cut and project set by reducing the
problem to lattice point counting, see Theorem
\ref{thm:main}. We apply the discrepancy estimates to the problem of identifying
new classes of cut and project sets that can be mapped by a biLipschitz map to
a
lattice, see Theorem \ref{thm:mainBL}. In particular, we note that for windows
of finite perimeter the sharp classes we obtain depend only on the diophantine
class of the lattice $\Gamma$ and nothing else.

\subsection{Uniform upper bounds for discrepancy}
We will first state our main theorem on discrepancies of cut and project sets.
We begin by defining what it means for a lattice to be \emph{$\psi$-repellent}
(for the origin of this term, see \cite{BjorklundHartnick}). 
\begin{defi}
    \label{def:repel}
Given an increasing
$\psi: (0, \infty)\to (0,\infty)$ so that $\psi(t)\to \infty$ as $t\to \infty$,
we call a lattice $\Gamma$ $\psi$-repellent (with respect to the decomposition
$\E = \E_\dwn \oplus \E_\lef$), if for every $\gamma \in \Gamma\setminus \{0\}$, 
\[
    |\pi_\lef(\gamma)| > \psi\left(|\pi_\dwn(\gamma)|\right)^{-1}. 
\]
If there is some $\mu > 0$ so that $\psi(r) \asymp r^\mu$ at $\infty$, we say
that $\psi$ \emph{grows at speed $\mu$ at infinity}. If on the other hand
$\psi(r) \lesssim_\eps r^\eps$ for every $\eps > 0$, for instance $\psi(r) =
\log(1 + r)$, we say that $\psi$ \emph{grows slowly at infinity}.
\end{defi}

\begin{theorem}
    \label{thm:main}
    Suppose that $\Gamma \subset \E$ is completely irrational with respect to
    $\E_\lef$ and let $0 <
    \fd \le 1$. Let the window $\Omega_\lef
    \subset \E_\lef$ have boundary with finite
    $(d_\lef-\fd)$-Minkowski content, and assume that the dual lattice
    $\Gamma^{\dagger}$ is $\psi$-repellent for some $\psi: (0, \infty)\to
    (0,\infty)$ that grows slowly. Let $\Omega_\dwn\subset \E_\dwn$ have finite perimeter. 
    
    Then, for any $\delta>0$ there are
$C_{\Lambda,\delta}, t_0>0$ so that for every $\bs \in \E$ and every $t>t_0$, the cut and project set
    $\Lambda(\Gamma+\bs; \Omega_\lef)$
    satisfies, 
    \begin{equation}
        \label{eq:withcubes}
        \abs{ \#(\Lambda\cap t\Omega_\dwn) - \frac{\vol(\Omega_\dwn) \vol(\Omega_\lef)}{\covol(\Gamma)}
        t^{d_\dwn}} \le C_{\Lambda,\delta} t^{d_\dwn} \psi(t)^{-s(1-\delta )}. 
    \end{equation}

\end{theorem}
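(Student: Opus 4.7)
The plan is to convert the counting problem into smoothed Poisson summation, letting the diophantine information enter through the dual lattice. I would first use the finite $(d_\lef - s)$-Minkowski content of $\partial\Omega_\lef$ to produce, for each $\eta > 0$, smooth mollifications $\phi_\eta^{\pm} \in C_c^\infty(\E_\lef)$ satisfying $\phi_\eta^- \le \chi_{\Omega_\lef} \le \phi_\eta^+$ with $\int(\phi_\eta^+ - \phi_\eta^-) \lesssim \eta^s$ and $\|\phi_\eta^{\pm}\|_{C^N} \lesssim_N \eta^{-N}$. Because $\Omega_\dwn$ has finite perimeter, I would similarly bracket $\chi_{\Omega_\dwn}$ by smooth functions $\chi_\rho^{\pm}$ at scale $\rho$, paying a bracketing error of order $\rho\, t^{d_\dwn - 1}$ after rescaling, which sandwiches $\#(\Lambda\cap t\Omega_\dwn)$ between two sums
\[
\sum_{\gamma\in\Gamma+\bs}\chi_\rho^{\pm}\!\left(\pi_\dwn \gamma/t\right)\phi_\eta^{\pm}(\pi_\lef \gamma).
\]

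Applying Poisson summation for $\Gamma$ turns each smoothed sum into a dual sum over $\Gamma^{\dagger}$: the $\gamma^{\dagger}=0$ term reconstitutes the announced main term, and the remainder is
\[
\frac{t^{d_\dwn}}{\covol(\Gamma)}\sum_{0\ne \gamma^{\dagger}\in\Gamma^{\dagger}}\widehat{\chi_\rho^{\pm}}(t\pi_\dwn\gamma^{\dagger})\,\widehat{\phi_\eta^{\pm}}(\pi_\lef\gamma^{\dagger})\,e^{2\pi i\langle\gamma^{\dagger},\bs\rangle}.
\]
I would split this tail according to whether $|\pi_\dwn\gamma^{\dagger}| \ge t^{-1}$ -- in which case the smoothness of $\chi_\rho^{\pm}$ yields decay $(\rho t|\pi_\dwn\gamma^{\dagger}|)^{-N}$ -- or $|\pi_\dwn\gamma^{\dagger}| < t^{-1}$, in which case the $\psi$-repellence of $\Gamma^{\dagger}$ forces $|\pi_\lef\gamma^{\dagger}| > \psi(t)$ and the smoothness of $\phi_\eta^{\pm}$ supplies decay $(\eta\psi(t))^{-N}$. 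A dyadic decomposition in the two projections reduces both tails to counting dual lattice points in slabs, for which the complete irrationality of $\Gamma$ provides the necessary polynomial volume bounds; the tails are summable once $N$ exceeds $d_\dwn$ and $d_\lef$.

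Balancing the bracketing cost $\eta^{s} t^{d_\dwn}$ against the tail contribution $t^{d_\dwn}(\eta\psi(t))^{-N+O(1)}$ forces $\eta \sim \psi(t)^{-(N-O(1))/(s+N-O(1))}$, and letting $N \to \infty$ turns the resulting exponent into $-s+\delta s$ for arbitrary $\delta>0$; choosing $\rho$ comparable to $\eta$ keeps the perimeter overhead subdominant. The main obstacle will be the second regime: because $\psi$ grows only slowly at infinity, the repellence constraint excludes only a thin cylinder of dual points, so any slack in the estimate on $\#\{\gamma^{\dagger}:|\pi_\dwn\gamma^{\dagger}|<t^{-1},\ |\pi_\lef\gamma^{\dagger}|\sim R\}$ propagates directly into a loss in the exponent. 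Obtaining those slab counts uniformly in $t$, by quantitatively exploiting that $\Gamma^{\dagger}$ is completely irrational with respect to the decomposition, is where the $\delta s$ slack is finally absorbed and where the constant $C_{\Lambda,\delta}$ acquires its dependence on the geometry of $\Gamma$.
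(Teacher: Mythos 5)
Your overall strategy — bracket both $\chi_{\Omega_\lef}$ and $\chi_{\Omega_\dwn}$ by smooth functions, apply Poisson summation, extract the $\gamma^\dagger = 0$ term as the main term, and split the dual tail according to whether $|\pi_\dwn\gamma^\dagger|$ is below the threshold where $\psi$-repellence forces $|\pi_\lef\gamma^\dagger|$ to be large — is close in spirit to what the paper does in Sections \ref{sec:smoothing} and \ref{sec:proofsupper}. The genuine gap is the slab-count step. You assert that ``complete irrationality of $\Gamma$ provides the necessary polynomial volume bounds'' for the number of dual lattice points with $|\pi_\dwn\gamma^\dagger|<t^{-1}$ and $|\pi_\lef\gamma^\dagger|\sim R$. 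But complete irrationality (Definition \ref{def:repel}) is a purely qualitative statement that $\Gamma^\dagger\cap\E_\lef=\{0\}$; it gives no quantitative control on how many dual points crowd into a thin slab. Likewise $\psi$-repellence gives only a \emph{lower} bound on $|\pi_\lef\gamma^\dagger|$ given $|\pi_\dwn\gamma^\dagger|$, not an upper bound on the multiplicity of such vectors. Without a uniform upper slab count your dyadic sums in both regimes are not shown to converge, let alone to the sharp rate; you yourself flag that ``any slack in the estimate \dots propagates directly into a loss in the exponent,'' but the plan as written has no mechanism to produce the estimate in the first place.

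The paper's proof avoids slab counts entirely by means of Peetre's inequality combined with an unfolding trick. Each summand is bounded, uniformly in the auxiliary variable $\bxi\in\E/\Gamma^\dagger$, by a shifted version of itself; integrating both sides over the fundamental domain $\E/\Gamma^\dagger$ and exchanging sum and integral unfolds the lattice sum into a single integral over $\E$, no counting required. Concretely, in the small-$\gamma^\dagger_\dwn$ region this produces $\Sigma_1 \lesssim a_\lef^{-K_\lef}\int_{|\xi_\lef|>\psi(t^\sigma)/2}|\xi_\lef|^{-L_\lef-K_\lef}\de\xi_\lef \lesssim a_\lef^{-K_\lef}\psi(t^\sigma)^{d_\lef-L_\lef-K_\lef}$, and in the complementary region a refined Peetre bound on $(1+t|\xi_\dwn|)^{-L_\dwn}$ costs only a factor $t^{L_\dwn\sigma}$ before unfolding. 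The diophantine input is used exactly once, to push the domain of integration for $\xi_\lef$ out to $|\xi_\lef|\gtrsim\psi(t^\sigma)$. If you replace your slab-counting step with this unfolding, the rest of your outline — including the observation that the mollification scale should tend to $\psi(t)^{-1}$ as the Schwartz exponent grows, giving the $-s+\delta s$ exponent for any fixed $\delta>0$ — matches the paper's Subsection on the balancing act, where $a_\lef\asymp\psi(t^\sigma)^{-1+\delta}$ with $K_\lef$ large and $\sigma=1$.
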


In Section \ref{sec:lattices} we show that proving Theorem \ref{thm:main}
reduces to a lattice point count in
anisotropically expanding domains --- this is the content of Lemma
\ref{lem:sumrepresentation}. Our main theorems in the lattice point
counting context are Theorems \ref{thm:maingeneral} and \ref{thm:pwswindows} of which Theorem
\ref{thm:main} is then a special case. 
Note that by the Khintcine-Groshev Theorem (also see
\cite{BjorklundHartnick}) the property of being $\psi$-repellent for
$\psi(t)\asymp t^{\mu}$, and hence our results, hold in a conull set of
lattices.

In Theorem \ref{thm:maingeneral} we assume that the search
region is a strictly convex domain satisfying some boundary curvature
assumption described later, whereas Theorem \ref{thm:pwswindows} merely assumes that the
search region has finite perimeter, so that balls
and cubes are special cases, respectively. A
simple
    self-contained proof of the asymptotic \eqref{eqdef:discrepcandp} is included along the way to Theorem \ref{thm:main}. 
We also formulate in Section \ref{sec:acceptance} a corollary of Theorem \ref{thm:main} for discrepancies of frequencies of $r$-patterns in cut and project sets, see Theorem \ref{thm:frequency}.

The main interest of the discrepancy bounds in Theorem \ref{thm:main} is that
generically in a complementary set of lattices we obtain a matching lower bound
along a subsequence, see Subsection \ref{sec:lowerbounds} and the appendix of
this text. This means that the class of lattices for which we obtain
these upper bounds is sharp. Of course other upper bounds on the discrepancy have been proved in the past by
Haynes, Julien, Koivusalo, and Walton \cite{HaynesJulienKoivusaloWalton19} and
R\"uhr, Smilansky and Weiss \cite{RSW}\footnote{To be more precise, these results are concerned with discrepancy to pattern frequencies. However, as we will explain in Section \ref{sec:acceptance}, there is no reason to make a distinction between the two problems.}. Haynes \emph{et al.} had a slightly different
parametrisation of the cut and project scheme as compared to the one we have
here, and one crucial difference is that they take both the window $\Omega_\lef$
and the search region $\Omega_\dwn$ to be axes-aligned cubes. This, together
with a diophantine condition on the lattice $\Gamma$ allows them to deduce a
very small discrepancy, in a similar way to the anomalous small remainders proven
for lattices in \cite[Theorem 3.5]{KordyukovYakovlevSpectralGeometry}. Namely,
they show that for any $\eps>0$, and almost
every choice of $\Gamma$, 
\begin{equation}
        \abs{\#(\Lambda\cap t\Omega_\dwn) -
        \frac{\vol(\Omega_{\lef})\vol(\Omega_\dwn)}{\covol(\Gamma)}t^{d_\dwn}} \le
        C_{\eps, \Gamma} (\log t)^{d_\lef -
    \eps}.
    \end{equation}
That these anomalously low bounds hold can also be deduced from an earlier work of Haynes, Kelly and Weiss \cite{HKW14}, but their parametrisation is yet different. The aim of both of these results was to obtain a conull set of lattices in which the discrepancy bounds hold. In particular, they did not work out the discrepancy bounds for classes broader than what was necessary for the statement on full measure, which amounts to a type of weak badly approximable condition on $\Gamma$. The weak bad approximation properties appearing in these works differ from the $\psi$-repellent condition of Theorem \ref{thm:main}. 
    
    The result of R\"uhr \emph{et al.} is also for $\Gamma$ belonging to a full
    measure subset of the set of lattices, but they
    do allow a wider choice of $\Omega_\lef$ and $\Omega_\dwn$. In particular,
    they assume that $\partial \Omega_\lef$ has box dimension bounded from above
    by $d_\lef-\fd$ and prove that for almost every lattice
\begin{equation}
        \abs{\#(\Lambda\cap \Omega_\dwn) -
        \frac{\vol(\Omega_{\lef})\vol(\Omega_\dwn)}{\covol(\Gamma)}} = \bigo{
    \vol(\Omega_\dwn)^{1-\frac{\fd}{d_\dwn-2\fd}}}. 
    \end{equation}
    For this result also, an underlying generic diophantine condition is key,
    but the techniques obscure it and it seems difficult to track it down
    explicitly.

All of these results give a growth rate for the discrepancy which is smaller
than that of Theorem \ref{thm:main}, but there is no clarity on the behaviour of
the discrepancy on the measure zero (but comeagre) set of lattices on which
their result does not hold. Theorem \ref{thm:main} differs from these
results in two ways. Foremost, it uses lattice point counting, a technique which is new to the study of
discrepancies of cut and project sets. Further, in Theorem \ref{thm:main}, the
connection between the established discrepancy bound and the function $\psi$ is
explicit and, in comparison to \cite{HaynesJulienKoivusaloWalton19} and
\cite{HKW14}, we give a new class of cut and project sets with uniform upper
bounds for discrepancy, as the diophantine condition of Theorem \ref{thm:main}
is explicit and on the dual lattice and not the lattice $\Gamma$ itself.

\subsection{Averaging, and lower bounds for discrepancy}
\label{sec:lowerbounds}
It is
natural to ask what is the best possible bound for the discrepancy that one can
expect. 
The reader may also have
noticed that the translation parameter $\bs$ seems spurrious, as none of the
previous results depended on it. This is the case because the methods we use are
translation invariant. At the same time, translating $\Gamma$ by $\bs$ is the same
as translating the window by $\bs_\lef$ and translating the search region by
$\bs_\dwn$; and up to a small set of parameters this should not affect the
actual statistics for the distribution of the points in a cut and project set.
With this heuristic in mind, we average the absolute value of the discrepancy
over a fundamental domain of $\Gamma$ to obtain lower bounds. 
The next theorem
should be interpreted as saying that this is the best upper bound on the
discrepancy that one can obtain on a full measure set of $\bs$. We note that these lower
bounds hold with even weaker assumptions on the lattice and the window. Indeed,
no irrationality at all is assumed on $\Gamma$.
\begin{theorem}
    Suppose $\Gamma \subset \E$ is any lattice, $\Omega_\lef$ is any bounded
    window with nonzero volume. There is $C, t_0 > 0$ so that the cut and project 
    sets $\Lambda(\Gamma+\bs,\Omega_\lef)$ satisfy for all $t > t_0$
    \begin{equation}
        \label{eq:lowerbound}
        \int_{\E/\Gamma} \abs{\Delta(\Lambda(\Gamma + \bs,\Omega_\lef);\B_\dwn(0,t))} \de \bs
        \ge C f(t) t^{\frac{d-1}{2}}.
    \end{equation}
    where for some $A > 0$ the function $f(t)$ is defined as
    \begin{equation}
        f(t) = \begin{cases}
            1 & \text{if } d \not \equiv 1 \mod 4 \\
            \exp (-A \log \log(t)^4) & \text{if } d \equiv 1 \mod 4.
        \end{cases}
    \end{equation}
    Furthermore,
    \begin{equation}
        \int_{\E/\Gamma} \Delta(\Lambda(\Gamma + \bs,\Omega_\lef);\B_\dwn(0,t))
        \de \bs = 0.
    \end{equation}
    \label{thm:lowerbound}
\end{theorem}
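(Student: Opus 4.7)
My plan is to work with the Fourier representation of $\Delta$ on the torus $\E/\Gamma$. Setting $\chi := \chi_{\B_\dwn(0,t)} \otimes \chi_{\Omega_\lef}$, the count $\#(\Lambda(\Gamma+\bs,\Omega_\lef) \cap \B_\dwn(0,t))$ equals the periodisation $\sum_{\gamma \in \Gamma} \chi(\gamma + \bs)$, and Poisson summation gives
\[
    \Delta(\bs) = \frac{1}{\covol(\Gamma)} \sum_{\gamma^* \in \Gamma^\dagger \setminus \{0\}} \hat\chi_{\B_\dwn(0,t)}(\gamma^*_\dwn)\, \hat\chi_{\Omega_\lef}(\gamma^*_\lef)\, e^{2\pi i \langle \gamma^*, \bs\rangle}.
\]
The signed-mean identity is then immediate: only the $\gamma^* = 0$ mode would survive averaging on $\E/\Gamma$, and it has been removed; equivalently, Fubini gives $\int_{\E/\Gamma} \sum_{\gamma}\chi(\gamma+\bs)\,\de\bs = \int_\E \chi\,\de x = \vol(\chi)$, which is exactly $\covol(\Gamma)$ times the subtracted main term.

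For the $L^1$ lower bound, my plan is to first establish a matching $L^2$ lower bound. Parseval gives
\[
    \int_{\E/\Gamma}\abs{\Delta(\bs)}^2\,\de\bs = \frac{1}{\covol(\Gamma)} \sum_{\gamma^* \in \Gamma^\dagger \setminus \{0\}} \abs{\hat\chi_{\B_\dwn(0,t)}(\gamma^*_\dwn)}^2 \abs{\hat\chi_{\Omega_\lef}(\gamma^*_\lef)}^2.
\]
Since $\hat\chi_{\Omega_\lef}(0) = \vol(\Omega_\lef) > 0$, continuity supplies a $\delta > 0$ such that $\abs{\hat\chi_{\Omega_\lef}(\gamma^*_\lef)}^2 \ge \vol(\Omega_\lef)^2/4$ on the slab $\{\abs{\gamma^*_\lef} \le \delta\}$ of $\Gamma^\dagger$. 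On this slab I would insert the Bessel asymptotic $\abs{\hat\chi_{\B_\dwn(0,t)}(\xi)}^2 \asymp t^{d_\dwn-1}\abs{\xi}^{-(d_\dwn+1)}(1 - \cos(4\pi t\abs{\xi} - \alpha_{d_\dwn}))$ and separate the non-oscillatory piece, which sums to order $t^{d_\dwn-1}$ since the tail $\sum \abs{\gamma^*_\dwn}^{-(d_\dwn+1)}$ converges absolutely over the positive-density projection of this slab to $\E_\dwn$. The oscillatory remainder is negligible in most dimensions; in the exceptional case $d \equiv 1 \pmod 4$ the phases conspire into an aligned cosine, and a Vinogradov/van der Corput-type estimate for the lattice exponential sum $\sum_{\gamma^*} \cos(4\pi t \abs{\gamma^*_\dwn})\, w(\gamma^*)$ introduces the $\exp(-A(\log\log t)^4)$ loss that defines $f(t)$.

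Finally, to transfer the $L^2$ lower bound to an $L^1$ lower bound, I would apply Paley-Zygmund via $\|\Delta\|_{L^1} \ge \|\Delta\|_{L^2}^3 / \|\Delta\|_{L^4}^2$ together with an upper bound $\|\Delta\|_{L^4}^4 \lesssim \|\Delta\|_{L^2}^4$. The fourth moment is, again via Parseval, an additive-energy sum $\sum_{\gamma_1^*+\gamma_2^* = \gamma_3^*+\gamma_4^*} \prod_i \hat\chi(\gamma_i^*)$ on $\Gamma^\dagger\setminus\{0\}$; the diagonal $\{\gamma_1^*,\gamma_2^*\} = \{\gamma_3^*,\gamma_4^*\}$ reproduces a constant multiple of $\|\Delta\|_{L^2}^4$, and the off-diagonal is dominated via the Bessel decay of $\hat\chi$ and the lattice geometry of $\Gamma^\dagger$. \emph{The main obstacle} is the refined $L^2$ lower bound in the exceptional case: ruling out sign-aligned cancellation of cosines indexed by an arbitrary full-rank lattice $\Gamma^\dagger$, uniformly in $\Gamma$, is a delicate arithmetic question whose best quantitative answer records precisely the $(\log\log t)^4$ factor appearing in $f$.
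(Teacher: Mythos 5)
Your plan for the zero-average identity is exactly the paper's (Lemma~\ref{lem:zeroaverage}): unfold the sum over $\Gamma$ into an integral over $\E$ by Fubini, and observe that the remaining mass cancels the main term. No issue there.

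For the $\RL^1$ lower bound, however, you have taken a genuinely different — and substantially harder — route than the paper, and it has a gap that is more serious than the one you flagged. The paper never passes through an $\RL^2$ bound at all. It uses the elementary inequality
\[
\norm{\Delta}_{\RL^1(\E/\Gamma)} \ge \abs{\widetilde{\Delta}_{\gamma^\dagger}} \quad\text{for any single } \gamma^\dagger \in \Gamma^\dagger,
\]
which is just the triangle inequality applied to $\widetilde{\Delta}_{\gamma^\dagger} = \int_{\E/\Gamma}\Delta(\bs)\be(\gamma^\dagger\cdot\bs)\de\bs$. It then produces \emph{one} dual lattice vector $\gamma^\dagger$ with $\abs{\gamma^\dagger_\lef}$ small (so $\abs{\ft[\lef]{\chi_{\Omega_\lef}}(\gamma^\dagger_\lef)}\gtrsim\vol(\Omega_\lef)$), $\abs{\gamma^\dagger_\dwn}$ of size at most a power of $\log t$, and $\dist(2t\abs{\gamma^\dagger_\dwn},\Z)\ge\exp(-C(\log\log t)^4)$, the last step coming from the Konyagin--Skriganov--Sobolev construction recorded as Porism~\ref{por:KSS} and assembled in Lemma~\ref{lem:diophantinecandp}. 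The $f(t)$ factor is exactly this diophantine lower bound. When $d_\dwn\not\equiv 1\pmod 4$, the phase offset in the Bessel asymptotic makes even this unnecessary: one evaluates the coefficient at $t$ and at $2t$ and uses that $\max\set{\abs{\sin(x+\theta)},\abs{\sin(2x+\theta)}}$ is bounded away from $0$ when $\theta\not\in\pi\Z$ (equation~\eqref{eq:smallsin}). So the diophantine content in the paper is entirely about finding \emph{one good frequency}, not about summing an oscillatory series.

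Your proposal instead needs two things, and the second is the real problem. First, an $\RL^2$ lower bound: here you must control the oscillatory piece $\sum_{\gamma^\dagger}\abs{\gamma^\dagger_\dwn}^{-(d_\dwn+1)}\cos(4\pi t\abs{\gamma^\dagger_\dwn}-\alpha)$ over the whole slab, for \emph{every} $t$, which is strictly harder than controlling a single term; the non-exceptional case is not automatically ``negligible'' the way you assert, and the exceptional case is where you parked the difficulty. Second, and crucially, you assert $\norm{\Delta}_{\RL^4}^4\lesssim\norm{\Delta}_{\RL^2}^4$ uniformly in $t$ and in the lattice $\Gamma$. This is an additive-energy estimate on $\Gamma^\dagger$ weighted by Bessel decays, and there is no argument in your sketch for why the off-diagonal solutions of $\gamma^*_1+\gamma^*_3=\gamma^*_2+\gamma^*_4$ contribute only $\bigo{\norm{\Delta}_2^4}$. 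For an \emph{arbitrary} lattice (the theorem makes no diophantine or irrationality assumption on $\Gamma$ whatsoever — this is one of its points) this is not at all evident, and I do not see how to establish it without precisely the kind of uniform-in-$\Gamma$ diophantine input that the $\RL^2$ and $\RL^4$ routes were supposed to avoid. The paper's $\ell^\infty$-of-Fourier-coefficients device sidesteps all of this with one line, which is why the proof then reduces to the neat diophantine Lemma~\ref{lem:diophantinecandp}. If you want to pursue your plan you would at minimum need to supply: (i) a proof of the $\RL^4/\RL^2$ moment comparison uniform in $\Gamma$ and $t$; and (ii) a proof that the oscillatory part of the $\RL^2$ sum cannot conspire against the main term for every $t$; both are open ends in the current sketch.

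One further small point: the normalisation of Parseval on $\E/\Gamma$ should carry a $\covol(\Gamma)^{-2}$ rather than $\covol(\Gamma)^{-1}$ (compare Proposition~\ref{AppMainFormula}), though this is cosmetic.
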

Just as Theorem \ref{thm:main} was a special case of Theorem
\ref{thm:maingeneral} through a characterisation as a lattice point counting
problem as proved in Section \ref{sec:lattices}, Theorem \ref{thm:lowerbound} is
a consequence of a corresponding lattice result, Theorem \ref{thm:loweravg}. 
What Theorem \ref{thm:lowerbound} indicates is that for every cut and project set and every $t$
large enough, there are positive measure subsets of $\E/\Gamma$ so that the
remainder terms are as big as indicated in \eqref{eq:lowerbound}, both in the
positive and the negative direction. This is, in a way, as best as one can hope
to do on full measure sets. In the Appendix, written by Michael Bj\"orklund and Tobias Hartnick,
it is instead the $\RL^2$ norm in the parameter $\bs$ which is considered, also
known as the  number variance. More precisely, define a lattice to be
$\psi$-Liouvillean if it satisfies the complementary property to being
$\psi$-repellant; in other words if there is a sequence $\gamma^{(n)}$ so that
\begin{equation}
    |\pi_\lef(\gamma^{(n)})| \le \psi(|\pi_\dwn(\gamma^{(n)})|^{-1}).
\end{equation}
Bj\"orklund and Hartnick show that for almost every $r > 0$, every cut and
project set $\Lambda$ originating from the window $\B_\lef(0,r)$ and
lattice whose dual is $\psi$-Liouvillean satisfies the asymptotics
\[
\limsup_{t\to\infty}\frac{\int_{\E/\Gamma} |\Delta(\Lambda(\Gamma+\bs,
\B_\lef(0,r));t\Omega_\dwn)|^2\de \bs}{t^{2d}\psi(t)^{-d_\lef - 1 - \delta}}=\infty. 
\]
for any $\delta > 0$. This means that, at least along subsequences, the
discrepancy can be very large. In fact, they also show that there exist
explicit $\psi$-Liouvillean lattices and $\Omega_\lef$ so that cut and project
sets $\Lambda$ with this data satisfies
\begin{equation}
    \label{eq:lownv}
\limsup_{t\to\infty}\frac{\int_{\E/\Gamma} |\Delta(\Lambda(\Gamma+\bs,
\Omega_\lef); t\Omega_\dwn)|^2\de \bs}{t^{2d_\dwn}\psi(t)^{-2 - \delta}}=\infty
\end{equation}
for any $\delta > 0$; see Theorems \ref{ThmLiouville} and \ref{ThmLiouvillebis}. For those lattices satisfying
\eqref{eq:lownv} H\"older's inequality implies that 
    \begin{equation}
        \label{eq:withcubessharp}
        \limsup_{t \to \infty} \frac{|\Delta(\Lambda(\Gamma + \bs,\Omega_\lef);t
        \Omega_\dwn)|}{t^{d_\dwn} \psi(t)^{-1 + \delta}}   = \infty. 
    \end{equation}
so that Theorem \ref{thm:main} is sharp when the boundary of the window has
Minkowski dimension $d - 1$.

\subsection{Alternative interpretation: lattice points in thin slabs}

There is another geometric interpretation of our results in terms of lattice
point counts which we describe now. Counting the number of points in the cut and
project set $\Lambda(\Gamma + \bs,\Omega_\lef)$ within dilates $t \Omega_\dwn$
of a search region is the same as counting the number of points in the lattice
$\Gamma$ in a thin slab $(t\Omega_\dwn \times \Omega_\lef) - \bs$. Our main
result, Theorem \ref{thm:main} combined with Theorem \ref{ThmLiouvillebis} can be rewritten in this language.
\begin{theorem}
    \label{thm:latticepointcountingversion}
    Let $\E = \E_\dwn \oplus \E_\lef$, and $\Gamma$ be a lattice completely
    irrational with respect to this decomposition. Let $\Omega_\lef \subset
    \E_\lef$ and $\Omega_\dwn \subset \E_\dwn$ have finite perimeter, and assume
    furthermore that the dual lattice $\Gamma^\dagger$ is $\psi$-repellent for
    some slowly growing $\psi : (0,\infty) \to (0,\infty)$. Then, for any
    $\delta > 0$ there is $C_{\Gamma,\delta}$ so that
    \begin{equation}
        \left|\# \Gamma \cap (t \Omega_\dwn \times \Omega_\lef) -
        \frac{\vol(\Omega_\dwn)\vol(\Omega_\lef)}{\covol(\Gamma)}t^{d_\dwn}
        \right| \le C_{\Gamma,\delta} t^{d_\dwn} \psi(t)^{-1 + \delta}.
    \end{equation}
    Furthermore, there are $\psi$-Liouvillean lattices and sets $\Omega_\lef
    \subset \E_\lef$ and a sequence $t_n \to \infty$ so that for every $\delta >
    0$, 
    \begin{equation}
        \left|\# \Gamma \cap (t_n \Omega_\dwn \times \Omega_\lef) -
        \frac{\vol(\Omega_\dwn)\vol(\Omega_\lef)}{\covol(\Gamma)}t_n^{d_\dwn}
        \right| \ge t_n^{d_\dwn} \psi(t_n)^{-1 - \delta}.
    \end{equation}
\end{theorem}
We note that this refines and corrects the lattice point counting results in
anisotropically expanding domains found in any of
\cite{KordyukovYakovlevZd,KordyukovYakovlevSpectralGeometry,lagace}, where the
contribution from Diophantine properties of the lattice was overlooked.

A classical problem in number theory is to prove existence of approximate
integer solutions to polynomial equations, for instance Davenport and Heilbronn
proved in \cite{DavenportHeilbronn} that there are infinitely many solutions to
\begin{equation}
    \label{eq:sumofsquares}
    \sum_{j=1}^5 \lambda_j n_j^2 \le \eps
\end{equation}
provided the $\lambda_1, \dots, \lambda_5$ are not zero, do not all share the same sign, and are
pairwise rationally independent. Freeman obtained asymptotics for the number
of solutions where every coordinate $n_i$ is below some value~$T$~\cite{Freeman}. This
corresponds to counting the number of points of $\Z^d$ in $\ker L \oplus
[-\eps,\eps]$ where $L$ is the linear map given by the matrix
$\begin{pmatrix} \lambda_1 & \lambda_2 & \lambda_3 & \lambda_4 & \lambda_5
\end{pmatrix}$ under the additional constraint that all coordinates are squares. After a
linear change of variable, this can be interpreted in our setting, up to the additional
constraint on the coordinates being squares.

Recently, in \cite{Walker}, Walker has obtained results of this type for general matrices $L$ in the case where the
constraint is that all the coordinates belong to some set $A$ of positive
density, as opposed to the situation in \eqref{eq:sumofsquares} where the set of
squares has density zero.
In this situation, he obtains an asymptotic relation where the first term is given by the
count of \emph{all} lattice points in $\ker L \times [-\eps,\eps]$, plus an
error term which is bounded in terms of the set $A$ and Diophantine properties
of the (dual) lattice \cite[Lemma 3.4]{Walker}; see also \cite[Lemma 8.4]{Walker2} where
the coefficients are assumed algebraic. As such, Theorem \ref{thm:latticepointcountingversion} makes
more precise, and sharp, the contribution of the principal term in Walker's results.

\subsection{BiLipschitz equivalence to lattices}

We now turn our attention to the question of mapping cut and project sets onto
lattices with a biLipschitz map $\phi: \Lambda \to \Z^d$. If such a map exists,
we say that $\Lambda$ is \emph{biLipschitz (BL) equivalent} to a lattice. The
question of BL equivalence to lattices of relatively dense and uniformly
discrete sets (\emph{separated nets}) is an old problem  originating in
geometry: if two metric spaces have separated nets that
are BL equivalent to each other, then they are quasi-isometric. Gromov \cite{G93} asked
whether there are choices of separated nets in the same metric space that are
not BL equivalent to each other. The constructions of Burago and Kleiner
\cite{BK98} and McMullen \cite{M98} were the first to show that the answer is
`yes' in euclidean space: there are 
separated nets that are not BL equivalent to a lattice.
Later it was found that in fact the collection of BL classes of separated nets
in $\R^d$ has the size of the continuum \cite{M11}. Burago and Kleiner
\cite{BK02} (in $\R^2$), and later, Aliste-Prieto, Coronel and Gambaudo
\cite{ACG13} (in $\R^d$) also showed that BL
equivalence can be reduced to uniform point counting in large cubes.

In the case of cut and project sets, Burago and Kleiner \cite{BK02} were able to prove that for almost every choice of $\Gamma$, a regular cut and project set in
dimension $d=3$ is BL equivalent to a lattice. This prompted them to
ask whether every regular cut and project set has this property. Their result was
extended to $s$-regular cut and project sets in any dimension $d$ by Haynes,
Kelly and Weiss \cite{HKW14}. Proofs of this kind rely on a result given in \cite[Theorem 3.1]{ACG13}, see also
\cite{BK02}, which establishes a link between discrepancy estimates of cut and project sets,
and their BL equivalence class. In Theorem \ref{thm:main} we established an explicit connection between a diophantine property of $\Gamma^\dagger$ and the discrepancy of the corresponding cut and project set. We can now apply it to find the following new class of cut and project sets which are BL equivalent to lattices. 

%
%

\begin{theorem}\label{thm:mainBL}
Suppose that $\Gamma \subset \E$ is completely irrational with respect to
    $\E_\lef$, $\bs\in \E$, and assume that $\Omega_\lef
    \subset \E_\lef$ is $s$-regular for some $0<s<1$. Assume, furthermore, that
    there exists some $\eta > 0$ such that $\Gamma^\dagger$ is $\log (1+t)^{(1 +
    \eta)/s}$-repellant. Then, the cut and project set $\Lambda(\Gamma + \bs,
    \Omega_\lef)$ is BL equivalent to a lattice.
\end{theorem}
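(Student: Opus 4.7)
The strategy is to combine Theorem \ref{thm:main} with the sufficient biLipschitz equivalence criterion of Aliste-Prieto, Coronel and Gambaudo \cite[Theorem 3.1]{ACG13} (see also Burago--Kleiner \cite{BK02}), which reduces BL equivalence of a repetitive Delone set to a lattice to a summability condition on the uniform absolute discrepancy in cubes of growing side length.

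First I would feed the hypotheses into Theorem \ref{thm:main}: since $\Omega_\lef$ is $s$-regular, its boundary has finite $(d_\lef-s)$-Minkowski content; the dual lattice $\Gamma^\dagger$ is $\psi$-repellent with $\psi(t)=\log(1+t)^{(1+\eta)/s}$, which grows slowly at infinity; and for the search region $\Omega_\dwn$ one may take a unit cube (and its translates), which has finite perimeter. Theorem \ref{thm:main} then delivers, for every $\delta>0$,
$$\bigl|\Delta(\Lambda,t\Omega_\dwn)\bigr|\le C_{\Lambda,\delta}\,\psi(t)^{-s(1-\delta)} = C_{\Lambda,\delta}\,(\log(1+t))^{-(1+\eta)(1-\delta)}.$$
Choosing $\delta=\eta/(2(1+\eta))$ (so that $\delta<\eta/(1+\eta)$) makes the exponent equal to $-(1+\eta/2)$, so the discrepancy is $O((\log(1+t))^{-(1+\eta/2)})$.

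The next step is to verify the ACG summability condition. On geometric scales $L=2^k$, the bound becomes $O(k^{-(1+\eta/2)})$, which is summable, so the discrepancy decays sufficiently fast along a dyadic sequence of cube side lengths for the ACG criterion to apply. This yields the conclusion that $\Lambda(\Gamma+\bs,\Omega_\lef)$ is biLipschitz equivalent to a lattice.

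The main technical point is ensuring that the bound from Theorem \ref{thm:main} is uniform over \emph{translates} of the cube $\Omega_\dwn$, rather than only holding for the dilations $t\Omega_\dwn$ of a single fixed cube, since the ACG criterion involves the supremum of discrepancy over all cubes of a given side length. This uniformity is ultimately a consequence of the translation invariance of the diophantine condition on $\Gamma^\dagger$: translating the search region corresponds to shifting the parameter $\bs$, which leaves the $\psi$-repellent hypothesis and hence the bound \eqref{eq:withcubes} unchanged. Once this uniformity is recorded, the theorem follows by direct invocation of the ACG criterion.
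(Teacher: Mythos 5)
Your proposal is correct and follows essentially the same route as the paper: both feed the hypotheses into Theorem \ref{thm:main} with a cube as search region, verify the Aliste-Prieto--Coronel--Gambaudo summability condition $\sum_n \log Z_\alpha(2^n) < \infty$ along dyadic scales, and handle the supremum over integer translates of cubes by observing (as in Remark \ref{rem:sinvariant}) that a translation of $\Omega_\dwn$ is equivalent to a shift in the parameter $\bs$, over which the bound of Theorem \ref{thm:main} is uniform. The only cosmetic difference is that you pin down $\delta=\eta/(2(1+\eta))$ explicitly, whereas the paper leaves $\delta$ sufficiently small and observes that the exponent $-1-\eta+\delta+\eta\delta$ in the resulting sum is then below $-1$.
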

As pointed out above, the property of being $\psi$-repellent for $\psi(t)\asymp
t^{\mu}$ holds in a conull set of lattices, and in particular the above result
establishes that almost every lattice $\Gamma$ corresponds to a cut and project
set which is BL equivalent to a lattice. We note that Theorems
\ref{ThmLiouville} and \ref{ThmLiouvillebis} of the Appendix show that this is
more or less the largest class of cut and project to which the sufficient conditions  of
Burago--Kleiner and Aliste-Prieto--Coronel--Gambaudo can be applied, up to a
very small (both null and meagre) set. In other words, if we want to determine
the
biLipschitz class of more cut and project sets a weaker sufficient condition
needs to be found, or an example which is not biLipschitz equivalent
to a lattice.

\subsubsection*{Plan of the paper}

In Section \ref{sec:background} and \ref{sec:candp}, we cover the background on
lattices, the Fourier transform and cut and project sets. There, we also fix
some notation and discuss the irrationality assumptions that are necessary for
our results to hold. Section \ref{sec:lattices} connects the distribution of
points in cut and project sets to lattice point counting, and states the main
general theorems. Section \ref{sec:smoothing} is a preparation section; we are
aiming to use Poisson summation for the lattice point counts. As such, it is
necessary to find smooth approximations to the functions being summed in order
to ensure sufficient decay of the Fourier transforms.
Section \ref{sec:proofsupper} is concerned with the proof of Theorems \ref{thm:maingeneral}
which gives upper bounds to discrepancy. Section
\ref{sec:quantitative} is again a preparation section, this time providing
uniform diophantine estimates for lattice projections. In Section
\ref{sec:average} we prove the Theorem \ref{thm:loweravg} on the average lower
bound for the discrepancy. Finally, the appendix of Bj\"orklund and Hartnick
gives the lower bounds on the number variance.

\subsubsection*{Acknowledgements}

The authors are grateful to A. Haynes, Y. Smilansky, J. Walton and B. Weiss for
their comments on an early version of this manuscript. The authors would also
like to thank J. Marklof for useful discussions. This article
profited from discussions during the Workshop on \emph{Aperiodic order and
approximate groups}  at KIT during August 2023. We thank KIT for the funding of
the workshop and are grateful to the participants for their input.
JL is thankful for the hospitality of the University of Bristol at the time of
writing this paper.

\section{Notation, definitions and assumptions}\label{sec:background}

In this section we collect notations, definitions that we use in this paper, as
well as recall some facts about them. The results stated in this section are
standard or folklore, we state them for convenience of reference. As such, the reader who is
short on time may wish to pass directly to Section \ref{sec:lattices}.

\subsection{Euclidean space}

Total space is denoted by $\E$, and $\dim(\E) = d$. The canonical decomposition
with respect to which cut and project sets are defined is always denoted 
\begin{equation}
    \E = \E_\dwn \oplus \E_\lef,
\end{equation}
with $\dim(\E_\dwn) = d_\dwn$ and $\dim(\E_\lef) = d_\lef = d - d_\dwn $. Sometimes, we require
other subspaces or decompositions of $\E$, in which case we denote them $\F
\subset \E$, or $\E = \F_\dwn \oplus \F_\lef$. The projection on $\F$ is denoted
by $\pi_\F$. 

Given a decomposition, whether the canonical one or an auxiliary one, we always
indicate with subscripts $\dwn$ or $\lef$ the association of a variable, a
subset, an operator, etc. with one of the subspace, for instance we would write
$\pi_\dwn$ for $\pi_{\E_\dwn}$. We also indicate with
$\bullet \in \set{\dwn,\lef}$ an association with either of them.

When describing the volume of a set, we will not always specify it is the volume
with respect to which subspace and it will be clear from context. For instance,
for some $\Omega_\dwn \subset \E_\dwn$, $\vol(\Omega_\dwn)$ is the
$d$-dimensional volume of $\Omega_\dwn$ induced by the Lebesgue measure on
$\E_\dwn$.

We give special names to some subsets of the fixed decomposition $\E = \E_\dwn
\oplus \E_\lef$

\begin{defi}
    A \emph{window} is a bounded subset $\Omega_\lef \subset \E_\lef$. If its interior
    is equal to the interior of its closure we say that it is a \emph{regular
    window}.
\end{defi}

\begin{defi}
    A \emph{search region} is a finite perimeter subset $\Omega_\dwn \subset \E_\dwn$. 
\end{defi}

We denote the indicator function of a set $\Omega \subset \E$ by $\chi_\Omega : \E \to \R$
\begin{equation}
    \chi_\Omega(\bx) = \begin{cases}
        1 & \text{if } \bx \in \Omega \\
        0 & \text{otherwise.}
    \end{cases}
\end{equation}


\subsection{Asymptotic notation}

We make frequent use of the Landau and Vinogradov asymptotic notation. To wit,
given two real-valued functions $f,g$, we say that
\begin{itemize}
    \item indiscriminately, $f = \bigo{g}$, $f \lesssim g$, or $g \gtrsim f$ to indicate that
        there exists $C > 0$ so that $\abs f \le C\abs g$;
    \item we use $f \asymp g$ to indicate that $f \lesssim g$ and $g \lesssim
        f$. 
\end{itemize}
In either cases, an index on the asymptotic notation indicates dependence of the
constants on this index. For instance, writing $f \lesssim_{\Gamma,\fd} g$ means
that the constant $C$ in the definition above may depend on $\Gamma$ and $\fd$.
\subsection{Lattices}
\label{sec:latticesdef}
\begin{defi}
    Given $\F \subset \E$ with potentially $\F = \E$, a \emph{lattice in $\F$} is
    a discrete cocompact subgroup $\Gamma$ of the group of translations
in $\F$. Through the identification of $\F$ with its group of translations, we
view a lattice as a subset $\Gamma \subset \F$, and $\F/\Gamma$ is a torus which
we can identify with any parallelotope (called a fundamental cell) generated by a basis of $\Gamma$. Given
a lattice this identification is fixed once and for all, the fundamental cell is
also denoted $\F/\Gamma$, and every $\bx \in 
\F$ can therefore be written uniquely as $\bx = \gamma + \bk$, for some $\gamma
\in \Gamma$ and $\bk \in \F/\Gamma$. 
The covolume of a lattice
$\Gamma \subset \F$ is defined as
\begin{equation}
    \covol(\Gamma) := \vol(\F/\Gamma).
\end{equation}
\end{defi}

The space $\CX_\E$ of all lattices in $\E$ is identified with
\begin{equation}
    \CX_\E := \GL(d,\Z)\backslash \GL(d,\R) / S_d,
\end{equation}
where $S_d$ is the symmetric group acting by permuting the columns of a $d\times
d$ matrix. Note that
this may differ from the standard description found in the litterature where
$S_d$ would be replaced by $\mathrm{SO}(d,\R)$; this is because for our purposes
we do \emph{not} want to identify lattices which are equivalent up to orthogonal
transformations. From this identification we see that $\CX_\E$ is an orbifold of
dimension $d^2$, and it
inherits a (Haar) measure and a topology from $\GL(d,\R)$, on which we put the
topology induced by the operator norm.

\begin{defi}
    Let $\Gamma$ be a lattice. Its dual lattice $\Gamma^\dagger$ is defined as
    \begin{equation}
        \Gamma^\dagger := \set{\gamma^\dagger \in \E : \gamma^\dagger \cdot
        \gamma \in \Z \text{ for all } \gamma \in \Gamma}.
    \end{equation}
    The dual lattice is itself a lattice, and $(\Gamma^\dagger)^\dagger =
    \Gamma$.
\end{defi}

\begin{defi}
    \label{def:gammasubspace}
    Let $\F \subset \E$ be a subspace. We denote $\Gamma(\F) := \Gamma \cap \F$
    and we say that $\F$ is a \emph{$\Gamma$-subspace} if $\Gamma(\F)$ spans
    $\F$.
\end{defi}
We note the important distinction between $\Gamma^\dagger(\F) := \Gamma^\dagger
\cap \F$ and
\begin{equation}
    \Gamma(\F)^\dagger := \set{\gamma^\dagger \in \spn(\Gamma(\F)) : \gamma^\dagger \cdot 
    \gamma \in \Z}.
\end{equation}

There is a correspondence between $\Gamma$-subspaces of dimension $m$ and
$\Gamma^\dagger$-subspaces of codimension $m$. Indeed, if $\F \subset \E$ is a
$\Gamma$-subspace, $\F^\perp$ is a $\Gamma^\dagger$-subspace, see
\cite[Lemma 3.2]{KordyukovYakovlevSpectralGeometry}. This correspondence is
showcased in the following lemma.
\begin{lemma}
    \label{lem:gammadaggersubspace}
    Let $\Gamma \subset \E$ be a lattice. For every $x \in \E$, $\spn(x)$ is a
    $\Gamma^\dagger$-subspace if and only if there exists $t \in \R$ and $\delta >
    0$ so that
    \begin{equation}
        \label{eq:hb}
        \set{y \in \E : x \cdot y \in (t-\delta,t+\delta)} \cap \Gamma =
        \varnothing.
    \end{equation}
\end{lemma}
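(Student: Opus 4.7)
The plan is to reduce both sides of the equivalence to a single statement about the image
\[
A := \{ x \cdot \gamma : \gamma \in \Gamma \} \subset \R,
\]
which is a subgroup of $(\R, +)$, and then to invoke the classical dichotomy that a subgroup of $\R$ is either dense or of the form $c\Z$ for some $c \ge 0$.

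First I would reformulate the right-hand side condition: the set $\{y \in \E : x \cdot y \in (t-\delta, t+\delta)\} \cap \Gamma$ is empty precisely when $A \cap (t-\delta, t+\delta) = \varnothing$. The existence of such a gap interval is clearly incompatible with $A$ being dense. Conversely, if $A$ is non-dense then by the dichotomy $A = c\Z$ for some $c \ge 0$, and one can place an open interval of the form $(t-\delta, t+\delta)$ strictly inside any gap between consecutive elements of $A$ (or anywhere at all when $c = 0$). Hence the right-hand side condition is equivalent to $A$ being of the form $c\Z$ for some $c \ge 0$.

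Next I would translate the left-hand side. If $x = 0$, both conditions hold trivially, so I can assume $x \ne 0$, in which case $\spn(x)$ is one-dimensional. The statement that $\spn(x)$ is a $\Gamma^\dagger$-subspace then means that there exists a nonzero $\lambda \in \R$ with $\lambda x \in \Gamma^\dagger$. By the definition of the dual lattice this is equivalent to $\lambda x \cdot \gamma \in \Z$ for every $\gamma \in \Gamma$, i.e.\ $\lambda A \subset \Z$. Because $\Gamma$ spans $\E$ and $x \ne 0$, the subgroup $A$ is nonzero, so the existence of such a $\lambda$ is precisely the assertion that $A = c\Z$ for some $c > 0$. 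Matching with the previous paragraph then closes the equivalence.

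The only structural input here is the subgroup dichotomy for $\R$; the bulk of the argument is bookkeeping of definitions. The one point that merits care is the degenerate case $x = 0$, which (together with the fact that $\Gamma$ spans $\E$) also rules out the degenerate case $A = \{0\}$ when $x \ne 0$, so that the two sides can be matched without extra hypotheses.
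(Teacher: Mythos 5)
Your proof is correct and takes essentially the same route as the paper's: both arguments reduce the statement to the observation that the additive subgroup $A = \{x\cdot\gamma : \gamma\in\Gamma\}\subset\R$ is either dense or a discrete cyclic group $c\Z$, and match this dichotomy against membership of a scalar multiple of $x$ in $\Gamma^\dagger$. The paper is terser (it phrases the backward direction as "$|x\cdot\gamma|$ is quantised" without naming the dichotomy, and handles $x=0$ only implicitly), whereas you invoke the dichotomy explicitly and isolate the degenerate cases $x=0$ and $A=\{0\}$; the underlying argument is the same.
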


\begin{proof}
    We first suppose $\spn(x)$ is a $\Gamma^\dagger$-subspace, so that there
    exists $a \in \R\setminus\{0\}$ such that $ax \in \Gamma^\dagger$. By definition of
    $\Gamma^\dagger$,
    \begin{equation}
        \set{y \in \E : ax \cdot y \in (1/4,3/4)} \cap \Gamma = \varnothing.
    \end{equation}
    Therefore we can take $t = (2a)^{-1}$ and $\delta = (4a)^{-1}$. 

    Suppose on the other hand that \eqref{eq:hb} holds for some $x\in \E$. This implies we cannot make the quantity $\abs{x \cdot \gamma}$ arbitrarily
    small. In particular, this implies that $\abs{x \cdot \gamma}$ for $\gamma\in \Gamma$ is quantised:
    there exists $a > 0$ such that for every $\gamma \in \Gamma$ there exists $n
    \in \Z$ such that ${x \cdot \gamma} = a n$. But then $a^{-1}x \in \Gamma^\dagger$, in other words $\spn(x)$ is a
    $\Gamma^\dagger$-subspace.
\end{proof}

\begin{defi}
    Let $\Gamma \subset \E$ be a lattice and $\F \subset \E$ a subspace. We say
    that $\Gamma$ is \emph{irrational with respect to $\F$} if $\Gamma(\F) = \set
    0$. If $\Gamma^\dagger$ is also irrational with respect to $\F$ we say that
    $\Gamma$ is \emph{completely irrational with respect to $\F$}. 
\end{defi}
%

Given a subspace $\F$ of codimension at least $1$, the set of lattices which are
not completely irrational with respect to $\F$ is dense in the set of lattices, but
is still small, as seen in the following proposition.

\begin{prop}
    \label{lem:sizeexceptions}
    Let $\F \subset \E$ be a subspace of dimension $m < d$. Then, the set of
    lattices completely irrational with respect to $\F$ is a comeagre set whose
    complement has Hausdorff dimension $dm$. 
\end{prop}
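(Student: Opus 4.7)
The plan is to handle comeagreness and the Hausdorff dimension estimate simultaneously, by writing the complement of the completely irrational locus as a countable union of smooth closed submanifolds of $\CX_\E$ of codimension $d-m$. I would begin by decomposing this complement as $\CB\cup\CB^\dagger$, where
\[
\CB := \{\Gamma\in\CX_\E : \Gamma\cap\F\ne\{0\}\}, \qquad \CB^\dagger := \{\Gamma\in\CX_\E : \Gamma^\dagger\cap\F\ne\{0\}\},
\]
and then treat each piece by the same method, exploiting that duality $\Gamma\mapsto\Gamma^\dagger$ is a diffeomorphism of $\CX_\E$.

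For $\CB$, I would pass to the covering space $\GL(d,\R)\to\CX_\E$, in which each lattice $\Gamma$ is represented as $g\Z^d$ for some $g$. For each primitive $n\in\Z^d\setminus\{0\}$, consider the slice
\[
A_n := \{g\in\GL(d,\R) : gn\in\F\}.
\]
Since the linear map $g\mapsto gn$ is a submersion of rank $d$ onto $\R^d$, its preimage $A_n$ of $\F$ is a closed smooth submanifold of $\GL(d,\R)$ of codimension $d-m$, and in particular nowhere dense. The preimage of $\CB$ in $\GL(d,\R)$ equals $\bigcup_n A_n$ indexed over primitive $n$ (it is enough to take primitive vectors, since $gn\in\F$ if and only if $g(n/k)\in\F$ for any nonzero integer $k$). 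By Baire's theorem this countable union is meagre, and by countable stability of Hausdorff dimension it has Hausdorff dimension $d^2-(d-m)$.

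For $\CB^\dagger$, duality corresponds upstairs to $g\mapsto g^{-T}$ which is a diffeomorphism; the same argument then applies verbatim. Combining both, $\CB\cup\CB^\dagger$ is meagre and of the claimed Hausdorff dimension, so its complement is comeagre in $\CX_\E$.

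The main technical obstacle is the clean descent from $\GL(d,\R)$ to the orbifold $\CX_\E=\GL(d,\Z)\backslash\GL(d,\R)/S_d$: the left $\GL(d,\Z)$-action permutes the family $\{A_n\}$, so a single representative per orbit of primitive integer vectors suffices, while the right $S_d$-action is finite and preserves dimension. The submersion property, hence the codimension and nowhere-density of each $A_n$, pass to the quotient since the actions are smooth and proper on a fundamental domain; verifying that Baire category and Hausdorff dimension survive this descent is the only non-routine point in the argument.
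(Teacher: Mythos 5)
Your route is essentially the one the paper itself sketches: lift to $\GL(d,\R)$, write the complement of the completely irrational locus as a countable union of linear loci, invoke Baire and countable stability of Hausdorff dimension, and transfer to the dual via the locally Lipschitz diffeomorphism $g \mapsto (g^{-1})^\top$. Packaging the bad set via the slices $A_n = \{g \in \GL(d,\R) : gn \in \F\}$ over primitive $n$ is a slightly cleaner bookkeeping than the paper's iterative count over the basis vectors, but the underlying count is the same.

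There is, however, a discrepancy you should not gloss over: you obtain Hausdorff dimension $d^2 - (d - m)$ for the complement, whereas the proposition asserts $dm$, and these disagree whenever $m < d$. Here you are right and the stated value in the proposition (and the paper's own proof sketch) is not. For a fixed primitive $n$, $g \mapsto gn$ is a surjective linear map $\R^{d\times d} \to \R^d$, so $A_n$ has codimension exactly $d - m$, and a countable union of such sets has Hausdorff dimension $d^2 - d + m = d(d-1)+m$. The paper reaches $dm$ by effectively treating the complement as a $d$-fold \emph{product} of $m$-dimensional constraints, but it is the \emph{union} over $j$ of the events ``the $j$-th generator lies in a countable union of $m$-dimensional affine planes while the other $d-1$ generators are free'', and each such event already has dimension $d(d-1)+m$. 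A sanity check with $d=2$, $m=1$, $\F = \R e_1$: a lattice generated by $\theta_1,\theta_2$ meets $\F$ nontrivially iff some integer combination of $\theta_1,\theta_2$ is horizontal, a single codimension-$1$ condition, so the bad set has dimension $3$, not $dm = 2$. The comeagreness conclusion and its use elsewhere in the paper are unaffected, but the asserted Hausdorff dimension should read $d^2 - d + m$.
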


\begin{proof}
    The projection by the quotient map of a comeagre set is comeagre, so we can
    identify $\Gamma$ with any of its (unordered) bases in
$\GL(d,\R)/S_d$. In turn, this is viewed as an open subset
    of $\R^{d \times d}/S_d$. The first basis element $\theta_1$ of $\Gamma$ can be chosen
    arbitrarily outside $\F$, that is, outside a set of dimension $m$. Recursively, to ensure that $\F \cap \Gamma = \set 0$,
    the $j$th basis vector needs to be chosen outside
    $\spn_\Z(\theta_1,\dotsc,\theta_{j-1})+\F$. This is a $d$-fold product of
    a countable union of dimension $m < d$ affine conditions, hence the set of
    irrational lattices with respect to $\F$ is a comeagre set whose complement
    has Hausdorff dimension $dm$. 

    To see that the same holds for completely irrational lattices, we observe
    that the involution $\Gamma \mapsto
\Gamma^\dagger$ corresponds in $\GL(d,\R)/S_d$ to the map $A \mapsto
(A^{-1})^\top$. This is a locally Lipschitz homeomorphism, hence it preserves
comeagreness and Hausdorff dimension. By the same construction as above, the set
of lattices whose dual is irrational with respect to $\F$ is also a comeagre set
whose complement has Hausdorff dimension $dm$, taking their union
yields our claim.
\end{proof}

\begin{remark}
    In our setting we will use (complete) irrationality with respect to
    $\E_\lef$. Irrationality of $\Gamma$ with respect to
    $\E_\lef$ implies that the projection $\pi_\dwn$ restricted to $\Gamma$ is injective; this allows
    us to translate the problems at hand into lattice point counting problems
    (see Lemma \ref{lem:sumrepresentation}).
    Irrationality of $\Gamma^\dagger$ with respect to $\E_\lef$ implies that
    $\Gamma_\lef$ is dense in $\E_\lef$ (see Lemma \ref{prop:irrational}). This latter fact is a
    standard condition in the definition of cut and project sets and it will
    appear technically in our use of the Poisson summation formula.
\end{remark}

%
%
%

\subsection{Fourier Transforms of indicator functions}

Define $\be : \R \to \C$ as 
\begin{equation}
    \be(x) := e^{-2 \pi i x}.
\end{equation}
The Fourier transform of a function $f : \E \to \C$ is defined as
\begin{equation}
    [\CF f](\bxi) := \int_\E \be(\bx \cdot \bxi) f(\bx) \de \bx
\end{equation}
Given a decomposition $\E = \F_\dwn \oplus \F_\lef$, we also define partial
Fourier transforms relative to the decomposition via the formulae
\begin{equation}
    [\CF_{\dwn} f](\bx_\lef,\bxi_\dwn) := \int_{\E_\dwn} \be(\bx_\dwn \cdot \bxi_\dwn)
    f(\bx_\dwn,\bx_\lef) \de \bx_\dwn
\end{equation}
and
\begin{equation}
    [\CF_{\lef} f](\bxi_\lef,\bx_\dwn) := \int_{\E_\lef} \be(\bxi_\lef \cdot \bx_\lef) 
    f(\bx_\dwn,\bx_\lef) \de \bx_\lef.
\end{equation}
A direct computation tells us that $\CF = \CF_\dwn \CF_\lef = \CF_\lef \CF_\dwn$.
We now provide three estimates on the decay of the Fourier transform of indicator
functions, depending (in decreasing order of regularity) on whether the boundary
of a set is strictly convex, has finite perimeter, or finite upper Minkowski
content. We note that the second estimate could be proved by means of the third,
but since the proof is simpler we keep it for clarity of exposition, in
particular for readers who are only interested in the case where the boundary is
regular.

\begin{lemma}
    \label{lem:Fourierdecay}
    Let $\E = \F_\dwn \oplus \F_\lef$ be a decomposition of Euclidean space with
    $\dim(\F_\dwn) = m$. Let $\Omega \subset \E$ be compact and assume that for every
    $\bx_\lef \in \F_\lef$, $\Omega \cap (\F_\dwn + \bx_\lef)$ either has
    zero $m$-dimensional measure or is a convex set with $\RC^{\frac{m-1}{2}}$
    boundary and principal curvatures uniformly bounded away from $0$. There exists $C > 0$ such that 
    \begin{equation}
        \label{eq:fourierdecay}
        \abs{[\CF \chi_{\Omega}](\bxi)} \le C
        (1 +
        \abs{\bxi_\dwn})^{\frac{-m-1}{2}}.
    \end{equation}
\end{lemma}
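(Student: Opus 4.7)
The plan is to reduce the estimate to a classical Fourier decay result for smooth strictly convex bodies in $\F_\dwn \cong \R^m$, applied slice-by-slice in the $\F_\lef$ direction. Using Fubini together with the factorisation $\CF = \CF_\lef \CF_\dwn$ one writes
\[
    [\CF \chi_\Omega](\bxi_\dwn, \bxi_\lef) = \int_{\F_\lef} \be(\bx_\lef \cdot \bxi_\lef) [\CF_\dwn \chi_{K(\bx_\lef)}](\bxi_\dwn) \de \bx_\lef,
\]
where $K(\bx_\lef) := \{\bx_\dwn \in \F_\dwn : (\bx_\dwn,\bx_\lef) \in \Omega\}$. Since $\Omega$ is compact, the $\bx_\lef$-integration runs over the bounded set $\pi_\lef(\Omega) \subset \F_\lef$, so it suffices to obtain the right pointwise bound on the inner Fourier transform with a constant uniform in $\bx_\lef$.

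For each $\bx_\lef$ such that $K(\bx_\lef)$ has positive $m$-dimensional measure, the hypothesis makes $K(\bx_\lef)$ a compact convex body with $\RC^{(m-1)/2}$ boundary and principal curvatures bounded below by a fixed $\kappa > 0$. One then invokes the classical estimate that for such bodies
\[
    \left|[\CF_\dwn \chi_{K(\bx_\lef)}](\bxi_\dwn)\right| \lesssim (1 + |\bxi_\dwn|)^{-(m+1)/2}.
\]
Its standard proof converts the volume integral into a boundary integral via the divergence theorem, which gains a factor $|\bxi_\dwn|^{-1}$, and then applies stationary phase to the resulting $(m-1)$-dimensional oscillatory surface integral, parametrising $\partial K(\bx_\lef)$ by the inverse Gauss map. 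The curvature lower bound translates into the non-degeneracy of the Hessian of the phase at the two stationary points where the outward normal is parallel to $\bxi_\dwn$, yielding the extra $|\bxi_\dwn|^{-(m-1)/2}$ decay; the regularity $\RC^{(m-1)/2}$ is exactly what is required for this top-order expansion to be valid.

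The main obstacle is verifying that the constant in the slice bound is uniform in $\bx_\lef$. Compactness of $\Omega$ gives a uniform bound on $\diam K(\bx_\lef)$; the uniform lower bound on the principal curvatures and the $\RC^{(m-1)/2}$ regularity are built into the hypothesis, but their dependence through the stationary phase expansion must be tracked carefully. A somewhat delicate point is the behaviour of slices near the boundary of $\pi_\lef(\Omega)$ where $K(\bx_\lef)$ may degenerate: by assumption these slices have zero measure and contribute nothing, but the implicit uniformity demands that the hypothesis be read as applying uniformly across all nondegenerate slices. Once uniformity is established, integrating the slice bound over $\bx_\lef \in \pi_\lef(\Omega)$ yields \eqref{eq:fourierdecay} with a constant depending only on the diameter, the curvature lower bound, and the $\RC^{(m-1)/2}$ control on the boundary parametrisation of $\Omega$.
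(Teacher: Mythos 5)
Your proposal follows the same structure as the paper's proof: write $\CF = \CF_\lef\CF_\dwn$, bound $\CF_\dwn\chi_{K(\bx_\lef)}$ uniformly in $\bx_\lef$ by the classical Fourier decay estimate for smooth strictly convex bodies, and integrate in the $\F_\lef$ direction. The only difference is cosmetic: the paper simply cites this slice estimate (to Iosevich--Liflyand), whereas you sketch its standard proof by the divergence theorem followed by stationary phase, and you correctly flag the uniformity-in-$\bx_\lef$ requirement that the paper also invokes.
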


\begin{remark}
    We note that the hypotheses of this statement differ slightly from those
    found in \cite[Theorem 4]{KordyukovYakovlevZd}, \cite[Theorem
    2.3]{KordyukovYakovlevSpectralGeometry} (where this lemma is used in their
    proof without explicitly stating it separately) and \cite[Lemma
    6.4]{lagace}. We add here the hypotheses that the boundary of $\Omega$ is sufficiently
    regular and that the principal curvatures are uniformly bounded away from
    $0$. These conditions are necessary to ensure the decay
    \eqref{eq:fourierdecay}: if the principal curvatures vanish at some point
    then the optimal order of decay is worse, see e.g. \cite{randollatticeI} for
    $\RL^p$ balls in $\E$ and \cite[Théorème 1]{CDVpointsentiers} for
    general sets. As such, the hypotheses on regularity and geometry of the
    previous lemma should be added to the statements of the
    aforementioned theorems for their proof to go through in their current
    state.
\end{remark}

\begin{proof}
    We write the Fourier transform as
    \begin{equation}
        \begin{aligned}
        [\CF\chi_\Omega](\bxi) &=  \int_{\F_\lef} \be(\bx_\lef \cdot \bxi_\lef)
        \int_{\F_\dwn} \be(\bx_\dwn \cdot \bxi_\dwn) \chi(\bx_\dwn,\bx_\lef)
        \de \bx_\dwn \bx_\lef \\
        &=  [\CF_\lef[\CF_\dwn \chi_{ \Omega \cap (\F_\dwn +
        \bx_\lef)}]](\bxi).
        \end{aligned}
    \end{equation}
    By the assumptions on $\Omega
    \cap (\F_\dwn + \bx_\lef)$, it follows from 
     \cite[Theorem 2.29]{iosevichliflyand} with $\alpha =
    0$ in their notation that under the hypotheses of the lemma
    \begin{equation}
        \abs{[\CF_\dwn \chi_{\Omega \cap (\F_\dwn + \bx_\lef)}(\bxi_\dwn)} \lesssim 
        (1 +
        |\bxi_\dwn|)^{\frac{-m-1}{2}},
    \end{equation}
    uniformly with respect to $\bx_\lef$. The claim then follows by bounding the
    Fourier transform with respect to the $\F_\lef$ direction by its value at
    $0$. 
\end{proof}

\begin{lemma}
    \label{lem:badFourierdecay}
    Let $\E = \F_\dwn \oplus \F_\lef$ be a decomposition of Euclidean space with
    $\dim(\F_\dwn) = m$. Let $\Omega \subset \E$ be compact and assume that for
    every $\bx_\lef \in \F_\lef$, $\Omega \cap (\F_\dwn + \bx_\lef)$ either has
    zero measure or is a
    $m$-dimensional domain with finite perimeter, and assume that this perimeter
    is uniformly bounded with respect to $\bx_\lef$. There
    exists $C > 0$ such that
    \begin{equation}
        \label{eq:badfourierdecay}
        \abs{[\CF \chi_{\Omega}](\bxi)} \le C
        (1 +
        |\bxi_\dwn|)^{-1}.
    \end{equation}
\end{lemma}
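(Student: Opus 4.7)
The plan is to mirror the structure of Lemma~\ref{lem:Fourierdecay}: decompose $\CF$ into partial Fourier transforms along $\F_\dwn$ and $\F_\lef$, establish a uniform decay estimate on each slice in the $\F_\dwn$ direction using the finite perimeter hypothesis, and then integrate out the $\F_\lef$ direction by compactness of $\Omega$.

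First, using $\CF = \CF_\lef \CF_\dwn$ and Fubini, I would write
\[
[\CF \chi_\Omega](\bxi) = \bigl[\CF_\lef\, [\CF_\dwn \chi_{\Omega_{\bx_\lef}}](\bxi_\dwn)\bigr](\bxi_\lef),
\]
where $\Omega_{\bx_\lef} := \Omega \cap (\F_\dwn + \bx_\lef)$ is, by hypothesis, either negligible or an $m$-dimensional set of finite perimeter with a bound uniform in $\bx_\lef$; compactness of $\Omega$ also provides a uniform volume bound on the slices and confines $\bx_\lef$ to the bounded set $\pi_\lef(\Omega)$.

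The main analytic input is then the classical single-slice estimate: for any bounded finite-perimeter set $S \subset \F_\dwn$,
\[
\bigl|[\CF_\dwn \chi_S](\bxi_\dwn)\bigr| \le \min\!\left(\vol(S),\, \tfrac{\Per(S)}{2\pi |\bxi_\dwn|}\right),
\]
which immediately yields $\bigl|[\CF_\dwn \chi_S](\bxi_\dwn)\bigr| \lesssim (1 + |\bxi_\dwn|)^{-1}$. The frequency-dependent half is the standard divergence-theorem inequality for $\mathrm{BV}$ functions: $\chi_S \in \mathrm{BV}(\F_\dwn)$ means its distributional gradient $D\chi_S$ is a finite vector-valued Radon measure with total variation $\Per(S)$, and pairing with $\be(\bx_\dwn \cdot \bxi_\dwn)$ yields
\[
2\pi i\, \bxi_\dwn\, [\CF_\dwn \chi_S](\bxi_\dwn) = \int_{\F_\dwn} \be(\bx_\dwn \cdot \bxi_\dwn)\, d(D\chi_S)(\bx_\dwn),
\]
from which the claimed inequality follows by taking Euclidean norms. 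The trivial bound by volume covers the low-frequency regime.

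To assemble the global estimate I would bound $\bigl|[\CF_\lef g](\bxi_\lef)\bigr|$ by the $\RL^1$-norm of $g$: since $\Omega_{\bx_\lef}$ is empty outside $\pi_\lef(\Omega)$ and the single-slice bound is uniform in $\bx_\lef$, integrating over this bounded set produces the required $(1+|\bxi_\dwn|)^{-1}$ decay, with implicit constant proportional to $\vol(\pi_\lef(\Omega))$ and the uniform perimeter bound on the slices. The only subtle point is justifying the distributional identity above, since $\be(\,\cdot\,\bxi_\dwn)$ is not compactly supported; this is routinely resolved by pairing $D\chi_S$ with a smooth cutoff equal to $1$ on a neighbourhood of $\overline{S}$, exploiting that $D\chi_S$ is supported on the compact essential boundary of $S$. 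I expect no other real obstacle: once the partial-Fourier decomposition is in place, the estimate reduces to a one-line application of the $\mathrm{BV}$ divergence theorem slicewise.
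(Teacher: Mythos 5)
Your proof is correct and takes essentially the same route as the paper: both decompose $\CF = \CF_\lef\CF_\dwn$, integrate by parts once in the $\F_\dwn$ slice using the finite-perimeter hypothesis to gain $(1+|\bxi_\dwn|)^{-1}$ with a constant controlled by the (uniformly bounded) slice perimeter, and then integrate out the bounded $\F_\lef$ direction. The only difference is one of phrasing: you invoke the distributional BV divergence theorem pairing $D\chi_S$ against $\be(\bx_\dwn\cdot\bxi_\dwn)$, while the paper writes the classical Gauss--Green boundary integral with an outward normal and $\vol_{m-1}(\del\Omega\cap(\F_\dwn+\bx_\lef))$; your formulation is in fact the marginally more careful one for sets that are merely of finite perimeter, since $|D\chi_S|$ is the correct notion of perimeter there.
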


\begin{proof}
    Assume without loss of generality that $\abs{\bxi_\dwn} \ge 1$. Write the
    Fourier transform again as   
    \begin{equation}
        \begin{aligned}
        [\CF\chi_\Omega](\bxi) &=  \int_{\F_\lef} \be(\bx_\lef \cdot \bxi_\lef)
        \int_{\F_\dwn} \be(\bx_\dwn \cdot \bxi_\dwn) \chi(\bx_\dwn,\bx_\lef)
        \de \bx_\dwn \bx_\lef \\
        &=  [\CF_\lef[\CF_\dwn \chi_{ \Omega \cap (\F_\dwn +
        \bx_\lef)}]](\bxi),
        \end{aligned}
    \end{equation}
    where we interpret the indicator function of $\Omega$ as a family
    parametrised by $\bx_\lef$ of indicators in $\F_\dwn$. Applying the
    Gauss--Green theorem
    we have that
    \begin{equation}
        \begin{aligned}
        \abs{\int_{\Omega \cap (F_\dwn + \bx_\lef)} \be(\bx_\dwn \cdot \bxi_\dwn)
        \de \bx_\dwn} &= \abs{\int_{\Omega \cap (F_\dwn + \bx_\lef)}
        \frac{1}{|\bxi_\dwn|^2} \div (\grad_{\bx_\dwn} \be(\bx_\dwn \cdot
        \bxi_\dwn))
        \de \bx_\dwn}\\
    &=  \abs{\int_{\del\Omega \cap (\F_\dwn + \bx_\lef)}
            \frac{\bxi_{\dwn} \cdot \nu}{\abs{\bxi_\dwn}^2} \be(\bx_\dwn \cdot
        \bxi_\dwn) \de \bx_\dwn} \\
        &\lesssim \abs{\bxi_\dwn}^{-1} \vol_{m-1} (\del\Omega \cap
        (\F_\dwn + \bx_\lef)),
    \end{aligned}
    \end{equation}
    where $\nu$ is the unit outward normal to the boundary. Uniform boundedness
    of the perimeter implies our claim.
\end{proof}

%

For our final estimate we start with a definition of (upper) Minkowski content.

\begin{defi}
    Let $\Upsilon \subset \E$ be a bounded set and let $0 \le s \le d$. The
    \emph{$s$-codimensional (upper) Minkowski content of $\Upsilon$ with respect to $\E$} is
    defined as
    \begin{equation}
        \mathfrak{M}_s(\Upsilon;\E) :=  \limsup_{r \searrow 0} r^{-s}
        \vol(\Upsilon_r),
    \end{equation}
    where $\Upsilon_r$ is the $r$-neigbourhood of $\Upsilon$ in $\E$.
\end{defi}

\begin{lemma}
    \label{lem:verybadFourierdecay}
    Let $\E = \F_\dwn \oplus \F_\lef$ be a decomposition of Euclidean space with
    $\dim(\F_\dwn) = m$. Let $0 < s < 1$ and let $\Omega \subset \E$ be compact and assume that for
    every $\bx_\lef \in \F_\lef$, $\Omega \cap (\F_\dwn + \bx_\lef)$ either has
    zero measure or is a
    $m$-dimensional domain whose boundary has finite $s$-codimensional Minkowski
    content with respect to $\F_\dwn$, and assume furthermore that this
    Minkowski content
    is uniformly bounded with respect to $\bx_\lef$. There
    exists $C > 0$ such that
    \begin{equation}
        \label{eq:verybadfourierdecay}
        \abs{[\CF \chi_{\Omega}](\bxi)} \le C
        (1 +
        |\bxi_\dwn|)^{-s}.
    \end{equation}
\end{lemma}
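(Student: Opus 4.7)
The plan is to mirror the structure of the proofs of Lemmas \ref{lem:Fourierdecay} and \ref{lem:badFourierdecay}: factor the Fourier transform as $\CF = \CF_\lef \CF_\dwn$, reducing the problem to a uniform bound on $\CF_\dwn \chi_{A_{\bx_\lef}}(\bxi_\dwn)$ for the slices $A_{\bx_\lef} := \Omega \cap (\F_\dwn + \bx_\lef)$. Since $\Omega$ is compact, $\bx_\lef$ varies over a bounded set, and the outer partial transform $\CF_\lef$ can then be bounded in absolute value by its value at $0$. Thus it suffices to prove that, uniformly in $\bx_\lef$,
\[
\bigl| \CF_\dwn \chi_{A_{\bx_\lef}}(\bxi_\dwn) \bigr| \lesssim (1 + |\bxi_\dwn|)^{-s}.
\]

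The regime $|\bxi_\dwn| \le 1$ is handled by the trivial estimate $|\CF_\dwn \chi_{A_{\bx_\lef}}| \le \vol_m(A_{\bx_\lef})$, so I may assume $|\bxi_\dwn| \ge 1$. The core of the argument is a finite-difference trick replacing the Gauss--Green integration by parts used in Lemma \ref{lem:badFourierdecay}, which is no longer available when $s < 1$. Namely, choose
\[
h := \frac{\bxi_\dwn}{2|\bxi_\dwn|^2} \in \F_\dwn,
\]
so that $h \cdot \bxi_\dwn = 1/2$ and $|h| = (2|\bxi_\dwn|)^{-1}$. A change of variable gives
\[
\CF_\dwn\bigl[\chi_A - \chi_A(\cdot + h)\bigr](\bxi_\dwn) = \bigl(1 - \be(-h \cdot \bxi_\dwn)\bigr) \CF_\dwn \chi_A(\bxi_\dwn) = 2 \, \CF_\dwn \chi_A(\bxi_\dwn),
\]
writing $A = A_{\bx_\lef}$ for brevity, so that
\[
2\, |\CF_\dwn \chi_A(\bxi_\dwn)| \le \bigl\| \chi_A - \chi_A(\cdot + h) \bigr\|_{L^1(\F_\dwn)}.
\]

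To bring in the Minkowski content hypothesis, observe that if $\chi_A(\by) \ne \chi_A(\by + h)$ then the segment from $\by$ to $\by + h$ must cross $\partial A$, so $\by$ lies within distance $|h|$ of $\partial A$. Hence
\[
\bigl\| \chi_A - \chi_A(\cdot + h) \bigr\|_{L^1} \le \vol_m\bigl((\partial A)_{|h|}\bigr) \lesssim |h|^s
\]
by the uniform bound on $\mathfrak{M}_s(\partial A_{\bx_\lef}; \F_\dwn)$. Substituting $|h| \asymp |\bxi_\dwn|^{-1}$ produces the desired slice bound, and interpolating with the trivial bound on $|\bxi_\dwn| \le 1$ yields \eqref{eq:verybadfourierdecay}.

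I do not foresee a serious obstacle: the argument is a textbook modulus-of-continuity estimate, and uniformity in $\bx_\lef$ follows directly from the assumed uniformity of the Minkowski content of the slices' boundaries. The only minor subtlety is that the finite limsup in the definition of $\mathfrak{M}_s$ only controls $\vol((\partial A)_r)$ for sufficiently small $r$; but compactness of $\Omega$ makes the set of relevant $|h| = (2|\bxi_\dwn|)^{-1}$ (for $|\bxi_\dwn| \ge 1$) lie in a bounded range, so an enlarged constant absorbs the intermediate scales.
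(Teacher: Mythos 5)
Your proof is correct, but it takes a genuinely different route from the paper's. The paper's proof covers $\del\Omega_\dwn$ by $\lesssim r^{s-m}$ cubes of sidelength $r$, removes them to form a Lipschitz domain $\Omega^{(r)}$, applies Gauss--Green there (as in Lemma~\ref{lem:badFourierdecay}) to get $|\bxi_\dwn|^{-1} r^{s-1}$, adds the volume $r^s$ of the removed cubes, and then optimizes $r \asymp |\bxi_\dwn|^{-1}$. You instead use the finite-difference identity
\[
(1-\be(-h\cdot\bxi_\dwn))\,\CF_\dwn\chi_A(\bxi_\dwn)=\CF_\dwn[\chi_A-\chi_A(\cdot+h)](\bxi_\dwn),
\]
with $h$ chosen so that $1-\be(-h\cdot\bxi_\dwn)=2$, reducing the Fourier bound to an $\RL^1$ modulus-of-continuity estimate, and then observing that $\{\chi_A\neq\chi_A(\cdot+h)\}$ lies inside the $|h|$-neighbourhood of $\del A$. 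Both approaches encode the same geometric input (the tubular neighbourhood of $\del A$ has volume $\lesssim r^s$), but yours is tighter: it bypasses the need to construct and integrate by parts on the auxiliary Lipschitz domain $\Omega^{(r)}$ and collapses the two error terms $r^s+|\bxi_\dwn|^{-1}r^{s-1}$ into the single tube-volume estimate, directly at the optimal scale $|h|\asymp|\bxi_\dwn|^{-1}$. Your observation about the $\limsup$ in the definition of Minkowski content only controlling small $r$, and compactness absorbing the intermediate range, is also a point the paper glosses over and is worth making explicit. Note in addition that your argument, being a single-scale difference estimate, would work verbatim under the weaker hypothesis of finite \emph{box content} rather than Minkowski content (the $\limsup$ versus a $\sup$ over small $r$), though the paper's hypothesis is of course sufficient for both proofs.
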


\begin{proof}
    Let $\Omega_\dwn \subset \F_\dwn$ be a set whose boundary has finite
    $s$-codimensional Minkowski content with respect to $\F_\dwn$. For $r > 0$, cover the
    boundary of $\Omega_\dwn$ with finitely many disjoint cubes of sidelength $r
    > 0$, of which we need at most $\lesssim r^{s-m}$. Define $\Omega^{(r)}$ to
    be $\Omega$ with those cubes removed; this is a set with Lipschitz boundary,
    hence to which we can apply the Gauss--Green theorem. Furthermore,
    $\vol_{m-1}(\del \Omega^{(r)}) \lesssim r^{s-1}$. Consequently,
    \begin{equation}
        \begin{aligned}
            \abs{\int_{\Omega_\dwn} \be(\bx_\dwn \cdot \bxi_\dwn) \de \bx_\dwn}
            &\le \abs{\int_{\Omega \setminus \Omega^{(r)}} \be(\bx_\dwn \cdot
            \bxi_\dwn) \de \bx_\dwn} + \abs{\int_{\del \Omega^{(r)}}
            \frac{\bxi_\dwn \cdot \nu}{\abs{\bxi_\dwn}^2} \be(\bx_\dwn \cdot
        \bxi_\dwn) \de \bx_\dwn} \\
        &\lesssim r^s + |\bxi_\dwn|^{-1} r^{s-1}.
        \end{aligned}
    \end{equation}
    We now assume without loss of generality that $|\bxi_\dwn| \ge 1$ is large
    enough and we
    take $r = |\bxi_{\dwn}|^{-1}$ to obtain
    \begin{equation}
        r^s + |\bxi_\dwn|^{-1} r^{s-1} \lesssim |\bxi_\dwn|^{-s}.
    \end{equation}
    Uniformity of the Minkowski content allows us to decompose the integral in
    the $\lef$ and $\dwn$ direction as in the proof of Lemma
    \ref{lem:badFourierdecay}, whence we obtain our desired result.
\end{proof}

\section{Cut and project sets}
\label{sec:candp}

In this section we introduce the definitions and preliminary results for cut and
project sets, and prove Theorem \ref{thm:mainBL} as a consequence of Theorem
\ref{thm:main}. We reserve the letter $\Lambda$ for cut and project sets, and
$\RM$ for subsets
of $\Lambda$ which are themselves a cut and project set.
\begin{defi}
    Let $\E = \E_\dwn \oplus \E_\lef$ be a decomposition of $d$-dimensional
    Euclidean space. Let $\Gamma \subset \E$ be a lattice, $\Omega_\lef \subset
    \E_\lef$ be a window and $\bs \in \E$. The cut and project set associated
    with this data is
    \begin{equation}
        \Lambda(\Gamma;\bs;\Omega_\lef) := \pi_\dwn( ( \Gamma + \bs) \cap
        \pi_\lef^{-1}(\Omega_\lef))
    \end{equation}
\end{defi}
We fix the decomposition $\E = \E_\dwn \oplus \E_\lef$ throughout and this is
why it has been dropped from the notation in this definition of cut and project
sets. 
\begin{remark}
    \label{rem:sinvariant}
    If $\gamma \in \Gamma$, then $\Lambda(\Gamma;\bs;\Omega_\lef) =
    \Lambda(\Gamma;\bs + \gamma;\Omega_\lef)$, in other words the map $\bs \mapsto
    \Lambda(\Gamma;\bs;\Omega_\lef)$ is $\Gamma$-periodic, so that we may
    consider $\bs \in \E/\Gamma$ is an element of the torus rather than of $\E$. In addition, we can
    observe that for any $\bs \in \E$,
    \begin{equation}
        \Lambda(\Gamma;\bs;\Omega_\lef) = \Lambda(\Gamma;0;\Omega_\lef +
        \bs_\lef) + \bs_\dwn,
    \end{equation}
    in other words every cut and project set with a parameter $\bs \in \E$ can
    be realised as the translation of a cut and project set with a parameter
    $\bs = 0$ by translating the window appropriately.
\end{remark}

\subsection{Assumptions on the cut and project set}

\label{sec:conditions}
We do not need to give any conditions on the parameter $\bs$ for our results to hold. We will require some
conditions on the other pieces of data which we list now. 
\begin{itemize}
    \item[\textbf{(O)}] The decomposition $\E_\dwn \oplus \E_\lef$ is
        orthogonal.
    \item[\textbf{(D)}] The image $\Gamma_\lef$ is dense in $\E_\lef$.
    \item[\textbf{(I)}] The projection $\pi_\dwn|_{\Gamma}$ is injective,
    \item[\textbf{($\fd$-Reg)}] The window $\Omega_\lef$ is  regular, meaning
        that it is relatively compact and has non-empty interior, and its
        boundary is of measure $0$. We assume further that the $\fd$-codimensional
        Minkowski content of $\del\Omega_\lef$ in $\E_\lef$ is finite.
\end{itemize}
The condition \textbf{(O)} is simply stated 
for convenience; every cut and project set can be realised as a cut and project
set satisfying \textbf{(O)}, see for example the discussion in \cite[p. 9]{RSW}.
The conditions \textbf{(I)} and \textbf{(D)} below are standard assumptions in
the area (see, for example, \cite[Section 2]{RSW}), but we will prove as Lemma
\ref{prop:irrational} below that they are equivalent to the assumption of
$\Gamma$ being completely irrational with respect to $\E_\lef$, a formulation
which is more convenient for
our purposes. Finally, some condition on the regularity on the boundary of the
window is needed; otherwise, not even the patch counting function behaves as
expected \cite{JLO19}. Condition \textbf{($\fd$-Reg)} is the weakest condition
that allows for our techniques. Notice that, given these assumptions the inverse
$(\pi_\dwn)^{-1}: \Lambda \to \Gamma$ is well defined. For any subset $A\subset
\Lambda$, we set the notation $A^\up=(\pi_\dwn)^{-1}(A)$, and the notation $A
^\star=\pi_\lef((\pi_\dwn)^{-1}(A))$.  


The following lemmas tell us that
complete irrationality of the lattice with respect to $\E_\lef$ is equivalent to conditions \textbf{(D)} and
\textbf{(I)}. We note that even outside our current purposes, complete
irrationality is \emph{a priori} a much easier condition to check directly on a given lattice than
density of the projection on $\E_\lef$, say.

\begin{lemma} \label{lem:claim}
The lattice projection $\Gamma_\lef$ is dense in
    $\E_\lef$ if and only if for every $r > 0$, the intersection $\B(0,r) \cap
    \Gamma_\lef$ spans $\E_\lef$.
\end{lemma}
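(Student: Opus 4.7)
The plan is to prove both directions directly, using density on the one hand and the structure theorem for closed subgroups of $\R^n$ on the other.

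For the forward direction, I would assume $\Gamma_\lef$ is dense and fix $r > 0$. Let $W := \spn(\B(0,r) \cap \Gamma_\lef) \subseteq \E_\lef$ and suppose for contradiction that $W$ is a proper subspace. Choose a nonzero $w \in W^\perp$. Every $\gamma_\lef \in \Gamma_\lef \cap \B(0,r)$ lies in $W$, hence satisfies $\gamma_\lef \cdot w = 0$. Applied to the vector $\tfrac{r}{2|w|} w \in \B(0,r)$, density gives $\gamma_\lef \in \Gamma_\lef$ within distance $\eps < r/2$ of it, so $|\gamma_\lef| < r$ and $\gamma_\lef \cdot w$ is arbitrarily close to $r|w|/2 > 0$, contradicting the previous line.

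For the reverse direction, I would invoke the structure theorem for closed subgroups of Euclidean space, which gives $H := \overline{\Gamma_\lef} = V \oplus L$ where $V$ is a linear subspace of $\E_\lef$ and $L$ is a discrete subgroup of a complementary subspace $V^\perp \cap \E_\lef$. Density is equivalent to $V = \E_\lef$, so suppose for contradiction $V$ is a proper subspace and pick a unit vector $w \in (V^\perp\cap\E_\lef)\setminus\{0\}$. Then the subgroup
\begin{equation}
\set{\gamma_\lef \cdot w : \gamma_\lef \in \Gamma_\lef} \subseteq \set{h \cdot w : h \in H} = L \cdot w
\end{equation}
is a discrete subgroup of $\R$, hence either trivial or of the form $\alpha\Z$ with $\alpha > 0$. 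In the first case, $\Gamma_\lef \subseteq w^\perp$ and the span of $\B(0,r) \cap \Gamma_\lef$ is contained in $w^\perp$ for every $r$, contradicting the spanning hypothesis. In the second case, choose $r < \alpha/|w|$: any $\gamma_\lef \in \B(0,r) \cap \Gamma_\lef$ satisfies $|\gamma_\lef \cdot w| \le r|w| < \alpha$, forcing $\gamma_\lef \cdot w = 0$, so again $\B(0,r) \cap \Gamma_\lef \subseteq w^\perp$ fails to span.

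Neither direction is technically hard; the only subtle point is the reverse direction, where one must know (or briefly justify) the structure theorem for closed subgroups of $\R^n$ and use it to reduce density to the absence of a nontrivial quantisation direction. Everything else is a one-line argument.
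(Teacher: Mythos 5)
Your forward direction is fine and matches what the paper dismisses as immediate. The reverse direction is a genuinely different route from the paper's — you invoke the structure theorem for closed subgroups of Euclidean space, whereas the paper argues constructively — but as written it has a gap: you assert that $L \cdot w = \{h \cdot w : h \in H\}$ is a discrete subgroup of $\R$. This is false in general. Discreteness of $L$ in $V^\perp$ does not survive pushing forward by an arbitrary linear functional: with $L = \Z^2$ and $w = (1,\sqrt{2})$ one gets $L \cdot w = \Z + \sqrt{2}\,\Z$, which is dense in $\R$. In that case your dichotomy (trivial or $\alpha\Z$) evaporates and the contradiction never arrives; there would exist $\gamma_\lef$ with $\gamma_\lef \cdot w$ arbitrarily small and nonzero, so you cannot conclude a small ball in $\Gamma_\lef$ lies in a hyperplane.

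The repair inside your own framework is to avoid the functional altogether: write $h = v + \ell$ with $v \in V$, $\ell \in L \subset V^\perp$, so that $\dist(h,V) = |\ell|$. Discreteness of $L$ in $V^\perp$ gives $c > 0$ with $|\ell| \ge c$ for all nonzero $\ell \in L$, hence $\B(0,c) \cap H \subset V$, and therefore $\B(0,c) \cap \Gamma_\lef \subset V$, a proper subspace — contradicting the spanning hypothesis at $r = c$. (This is the usual fact that the identity component $V$ is open in $H$.) By contrast, the paper proves the converse directly and without the structure theorem: from the hypothesis it extracts $d_\lef$ linearly independent elements $\Sigma_\eps \subset \Gamma_\lef \cap \B(0,\eps/d_\lef)$; since $\pi_\lef$ is a group homomorphism, $\spn_\Z(\Sigma_\eps)$ is a full-rank sublattice of $\Gamma_\lef$ whose fundamental parallelotope has diameter $< \eps$, so every point of $\E_\lef$ is within $\eps$ of $\Gamma_\lef$. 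Both routes work once yours is patched, but the paper's is more elementary and quantitative.
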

\begin{proof}
    We see immediately that density of $\Gamma_\lef$ in $\E_\lef$ implies the existence of
    arbitrarily small subsets of $\Gamma_\lef$ spanning $\E_\lef$; we now
    prove the other direction. We observe that $\pi_\lef : \Gamma
    \hookrightarrow \E_\lef$ is a group homomorphism. In particular, for every linearly independent
    $\Sigma \subset \Gamma_\lef$ of cardinality $d_\lef$, $\spn_\Z(\Sigma)$ is a lattice
    in $\E_\lef$. In order to prove our claim, it is sufficient to show that for every
    $x_\lef \in \E_\lef$ and $\eps > 0$, there exists $\Sigma \subset
    \Gamma_\lef$ such that $\dist(x_\lef,\spn_\Z(\Sigma)) < \eps$. We note that for
    any full rank lattice $\Theta \subset \E_\lef$,
    \begin{equation}
        \label{eq:distquotient}
        \sup_{x_\lef \in \E_\lef} \dist(x_\lef,\Theta) < \diam(\E_\lef/\Theta).
    \end{equation}
    Now, if $\Sigma$ is a basis for a lattice $\Theta$, then identifying
    $\E_\lef/\Theta$ with the parallelotope generated by $\Sigma$ we see
     that
    \begin{equation}
    \diam(\E_\lef/\Theta) < d_\lef \max\set{\abs \theta : \theta \in \Sigma}.
    \end{equation}
    In particular, by hypothesis for every $\eps > 0$ there exists $\Sigma_\eps
    \in \B_\lef(0,d_\lef^{-1}\eps) \cap \Gamma_\lef$ of cardinality $d_\lef$
    spanning $\E_\lef$, so that we deduce from \eqref{eq:distquotient} that
    $\dist(x,\spn_\Z(\Sigma_\eps)) < \eps$, and indeed existence of arbitrarily small subsets
    of $\Gamma_\lef$ spanning $\E_\lef$ implies density of $\Gamma_\lef$ in
    $\E_\lef$.
\end{proof}

\begin{lemma}\label{prop:irrational}
 The lattice $\Gamma$ is completely irrational with respect to $\E_\lef$,
 if and only if conditions $\textbf{(D)}$ and $\textbf{(I)}$ hold.
\end{lemma}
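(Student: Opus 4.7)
The equivalence cleanly splits into two independent pieces that can be handled separately: condition \textbf{(I)} captures the irrationality of $\Gamma$ itself with respect to $\E_\lef$, while \textbf{(D)} captures the irrationality of $\Gamma^\dagger$. So the plan is to prove
\[
\Gamma(\E_\lef) = \{0\} \iff \textbf{(I)} \qand \Gamma^\dagger(\E_\lef) = \{0\} \iff \textbf{(D)}.
\]

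The first equivalence is essentially a restatement of the definitions once one uses the orthogonality assumption \textbf{(O)}. Under orthogonal decomposition, $\ker(\pi_\dwn) = \E_\lef$, and since $\pi_\dwn\big|_\Gamma$ is a homomorphism of abelian groups, injectivity on $\Gamma$ is equivalent to triviality of the kernel $\Gamma \cap \E_\lef = \Gamma(\E_\lef)$. There is no hidden content here.

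For the second equivalence I would proceed by contrapositive in both directions. In the easy direction, suppose there is a nonzero $\xi \in \Gamma^\dagger \cap \E_\lef$. By orthogonality of the decomposition, for every $\gamma \in \Gamma$ we have $\gamma \cdot \xi = \pi_\lef(\gamma) \cdot \xi$, and this lies in $\Z$ by definition of $\Gamma^\dagger$. So every element of $\Gamma_\lef$ lies in the set $\{x \in \E_\lef : x \cdot \xi \in \Z\}$, which is a union of parallel affine hyperplanes and has empty interior in $\E_\lef$; density fails. In the harder direction, suppose $\Gamma_\lef$ is not dense, and let $H := \overline{\Gamma_\lef}$, a proper closed subgroup of $\E_\lef$. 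I would then invoke Pontryagin duality (equivalently, the classification of closed subgroups of $\R^{d_\lef}$ as a direct sum of a linear subspace and a discrete subgroup of a complementary one) to produce a nonzero character of $\E_\lef$ trivial on $H$: that is, a nonzero $\xi \in \E_\lef$ with $h \cdot \xi \in \Z$ for all $h \in H$. In particular $\pi_\lef(\gamma) \cdot \xi \in \Z$ for every $\gamma \in \Gamma$, and applying orthogonality once more shows this equals $\gamma \cdot \xi$. Hence $\xi \in \Gamma^\dagger \cap \E_\lef$ is a nonzero element, contradicting irrationality of $\Gamma^\dagger$.

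The one step that is more than bookkeeping is the production of the dual character $\xi$ in the reverse direction of the second equivalence; this is where some structural input is genuinely needed. The cleanest statement is via Pontryagin duality, but one could alternatively use Lemma \ref{lem:claim} to pass to the span of small vectors and then quote Lemma \ref{lem:gammadaggersubspace} to construct $\xi$ by hand. Either way, once $\xi$ is produced the orthogonality of the decomposition \textbf{(O)} is doing all the work to translate the integrality condition on $\pi_\lef(\Gamma)$ into membership in $\Gamma^\dagger$.
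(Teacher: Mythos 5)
Your proof is correct, and you have correctly identified that the heart of the matter is the reverse implication in the second equivalence (producing a nonzero dual vector from failure of density). Where you diverge from the paper is in how that dual vector is produced: you appeal to Pontryagin duality, equivalently the structure theorem that a proper closed subgroup $H = \overline{\Gamma_\lef}$ of $\R^{d_\lef}$ splits as a linear subspace plus a discrete complement, which immediately yields a nonzero $\xi \in \E_\lef$ integral on $H$. The paper instead works entirely at the level of elementary geometry of lattice projections: using Lemma \ref{lem:claim} it extracts a threshold radius $r$ below which $\Gamma_\lef$ spans only a proper subspace $\F$, then takes a unit vector $x \in \F^\perp \cap \E_\lef$, observes that the slab $\{y : x \cdot y \in (r/4, 3r/4)\}$ avoids $\Gamma$, and finally invokes Lemma \ref{lem:gammadaggersubspace} to conclude that $\spn(x)$ is a $\Gamma^\dagger$-subspace. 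Both routes are sound; your Pontryagin-duality argument is shorter and conceptually transparent but quotes a structural classification as a black box, whereas the paper's argument is deliberately self-contained, building on the two preparatory lemmas it has already established, and stays within the language of lattice subspaces it uses throughout. You flag this alternative yourself in your last paragraph, so you are aware of the trade-off; one minor point worth spelling out if you keep the structure-theorem route is that the discrete part $D$ can be taken inside $V^\perp$ so that the candidate $\xi \in D^\dagger$ (or $\xi \perp H$ when $D$ is not full rank) is automatically orthogonal to the continuous part $V$, which is what makes $\xi \cdot h \in \Z$ hold on all of $H$ rather than only on $D$.
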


\begin{proof}
    The proof is split in two: we show that irrationality of $\Gamma$ with
    respect to $\E_\lef$ is equivalent to \textbf{(I)}, whereas irrationality of
    $\Gamma^\dagger$
    with respect to $\E_\lef$ is equivalent to \textbf{(D)}. The first assertion
    is direct: $\E_\lef = \ker(\pi_\dwn)$, so that we see directly
    that $\Gamma$ being irrational with respect to $\E_\lef$ is equivalent to
    $\pi_\dwn$ being injective when restricted to $\Gamma$.

    Now, suppose \textbf{(D)}, that $\Gamma_\lef$ is dense in
    $\E_\lef$. Then, for every $x_\lef \in \E_\lef\setminus\{0\}$, there exists $\gamma \in \Gamma$
    such that 
    \begin{equation}
    0 < \abs{x_\lef \cdot \gamma} = \abs{x_\lef \cdot \gamma_\lef} <
    1/2,
    \end{equation}
in particular $x_\lef \not \in \Gamma^\dagger$ and $\Gamma^\dagger$ is irrational with respect to $\E_\lef$. On the other hand, if
    $\Gamma_\lef$ is not dense in $\E_\lef$, use Lemma \ref{lem:claim} to find $r>0$ such that 
    \begin{equation}
        r = \inf \set{t : \spn(\B(0,t) \cap \Gamma_\lef) = \E_\lef}.
    \end{equation}
    Choose a codimension $1$ subspace $\F \subset \E_\lef$ spanned by elements of
    $\overline{\B(0,r)} \cap \Gamma_\lef$ and containing every $\gamma \in
    \B(0,r) \cap
    \Gamma_\lef$. (In particular for $d_\lef=1$ take $\F^\perp$ to be the whole $\E_\lef$.) Let $x \in \F^\perp \cap \E_\lef$ have norm $1$. By
    construction,
    \begin{equation}
        \set{y \in \E : x \cdot y \in (r/4,3r/4)} \cap \Gamma = \varnothing,
    \end{equation}
    so that Lemma \ref{lem:gammadaggersubspace} implies that $\spn(x)$ is a
    $\Gamma^\dagger$-subspace, in other words $\Gamma^\dagger \cap \E_\lef \ne
    \set{0}$.
\end{proof}

\begin{remark}
    We observe that complete irrationality of $\Gamma$ with respect to
    $\E_\lef$ does not preclude $\Lambda$ from containing periodic subsets
    of rank strictly smaller than $d$, so that these are also included in our
    analysis.
\end{remark}

We call a window satisfying Condition \textbf{($\fd$-Reg)} a $\fd$-regular
window. Boundedness of the window ensures that $\Lambda$ is uniformly discrete, and non-empty interior of the window guarantees that $\Lambda$  is relatively dense. The 
Minkowski content of $\del \Omega_\lef$ will determine how good of an upper
bound we are able to get on the discrepancy. 

\subsection{BL equivalence classes}

\label{sec:BLequiv}

Recall that we call a subset $Y$ of euclidean space a \emph{separated net} if
it is uniformly discrete and relatively dense. Two separated nets $Y, Y'$ are
said to be \emph{biLipschitz (BL) equivalent} if there is a biLipschitz map
$\phi: Y\to Y'$. We now describe sufficient conditions under which we can obtain
BL equivalence
of a separated net $Y$, \emph{a fortiori} of a cut and project set, to a lattice. We note
that since lattices are all in the same BL equivalence class, these
characterisations are often given in terms of equivalence to $\Z^{d_\dwn}$. 

Given a separated net $Y \subset \E_\dwn$, and a
search region $\Omega_\dwn$ and $\alpha > 0$, put
\begin{equation}
    \zeta_\alpha(\Omega_\dwn) =\max\set{\frac{\alpha \vol(\Omega_\dwn)}{\#(Y \cap
    \Omega_\dwn)},\frac{\#(Y \cap \Omega_\dwn)}{\alpha \vol(\Omega_\dwn)}}.
\end{equation}
Set $Q_\dwn \subset \E_\dwn$ to be the unit cube and for $t > 0$ put
\begin{equation}
    Z_\alpha(t) = \sup_{\bn \in \Z^{d_\dwn}} \zeta_\alpha(\bn + t Q_\dwn)
\end{equation}
The following lemma appears first as \cite{BK02} for $d_\dwn = 2$ and \cite[Theorem
3.1]{ACG13} for any $d_\dwn \ge 2$.
\begin{lemma}
    \label{lem:BLequivalent}
    Let $Y \subset \E_\dwn$ be a separated net, and suppose that there is
    $\alpha > 0$ such that
    \begin{equation}
        \sum_{n=1}^\infty \log Z_\alpha(2^n) < \infty.
    \end{equation}
    Then, $Y$ is BL-equivalent to $\Z^{d_\dwn}$.
\end{lemma}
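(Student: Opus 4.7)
The plan is to construct an explicit biLipschitz bijection $\phi: Y \to \Z^{d_\dwn}$ following the multiscale scheme of Burago--Kleiner (in dimension two) and Aliste-Prieto--Coronel--Gambaudo (in general dimensions). A preliminary rescaling $\bx \mapsto \alpha^{1/d_\dwn} \bx$ reduces the problem to the case $\alpha = 1$, so that $Y$ has the same asymptotic density as $\Z^{d_\dwn}$; this rescaling changes biLipschitz constants by a finite amount and so is harmless.

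The strategy is to build $\phi$ as the limit of a sequence of auxiliary biLipschitz homeomorphisms $\Phi_n: \E_\dwn \to \E_\dwn$. At stage $n$, I tile $\E_\dwn$ by cubes $Q_\bm$ of sidelength $2^n$ indexed by $\bm \in \Z^{d_\dwn}$. The hypothesis yields, for every such cube,
\begin{equation}
    Z_\alpha(2^n)^{-1} \le \frac{\#(Y \cap Q_\bm)}{\vol(Q_\bm)} \le Z_\alpha(2^n),
\end{equation}
so the local mean density of $Y$ inside $Q_\bm$ differs from $1$ by a multiplicative factor at most $Z_\alpha(2^n)$. The heart of the construction is a \emph{local correction} inside each $Q_\bm$: a biLipschitz self-map of $Q_\bm$ that is the identity near $\del Q_\bm$ (so that corrections from neighbouring cubes glue continuously) and which pushes the empirical measure of $Y \cap Q_\bm$ toward the uniform measure up to an error that is deferred to coarser scales. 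Following the approach in \cite{ACG13}, such a correction is obtained by solving a discrete/regularised transport problem $\operatorname{div} v = f$ in which $f$ encodes the mean-zero local excess of $Y$ relative to Lebesgue measure, producing a biLipschitz map whose Lipschitz constant on the stage-$n$ correction is of order $1 + C \log Z_\alpha(2^n)$.

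The main obstacle is controlling how the corrections at all scales compose to yield a genuine biLipschitz map. The total Lipschitz constant of $\Phi = \lim_n \Phi_n$ is dominated by $\prod_{n \ge 1}(1 + C\log Z_\alpha(2^n))$, and the summability hypothesis $\sum_n \log Z_\alpha(2^n) < \infty$ is precisely what makes this infinite product converge. Care is needed so that the correction at scale $n$ does not undo the work done at coarser scales $m > n$; this is arranged by processing scales from coarsest to finest and by supporting each correction strictly inside its own cube, so that finer corrections act on already-normalised masses. Once $\Phi$ is shown to be a well-defined biLipschitz homeomorphism of $\E_\dwn$ mapping $Y$ to a set at bounded Hausdorff distance from $\Z^{d_\dwn}$, a standard bounded-displacement matching (via Hall's marriage theorem, which succeeds because uniform discreteness and relative density give the requisite combinatorial balance at every scale) yields the desired biLipschitz bijection $\phi: Y \to \Z^{d_\dwn}$.
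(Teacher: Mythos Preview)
The paper does not prove this lemma; it is quoted as a known result from \cite{BK02} (for $d_\dwn = 2$) and \cite[Theorem 3.1]{ACG13} (for general $d_\dwn \ge 2$), and is used as a black box in the proof of Theorem \ref{thm:mainBL}. Your sketch follows the broad multiscale strategy of those cited works, so in that sense it is aligned with what the paper invokes.

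That said, your outline glosses over the genuinely hard step. The local correction at each scale is not simply a transport problem ``$\operatorname{div} v = f$'' pushing an empirical measure toward uniform; in \cite{ACG13} one associates to $Y$ a piecewise-constant density on dyadic cubes and solves, scale by scale, a prescribed-Jacobian problem $\det D\phi_n = \rho_n$ with biLipschitz constant $1 + C(Z_\alpha(2^n) - 1)$. This requires a delicate construction ensuring that the maps on adjacent cubes agree along shared faces --- which is substantially more than being ``the identity near $\partial Q_\bm$'' as you write; indeed, an identity-near-boundary correction cannot in general realise the needed Jacobian since the total mass in each cube is not exactly $\vol(Q_\bm)$. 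Your sketch gives no indication of how this face-matching is achieved, and without it the glued map need not even be continuous, let alone biLipschitz. If you intend a self-contained proof rather than a citation (which is all the paper provides), this is the gap to fill.
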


We use this lemma as well as our Theorem \ref{thm:main} in order to prove
Theorem \ref{thm:mainBL}.

\noindent \textbf{Proof of Theorem \ref{thm:mainBL}.} 
Recall from Remark
\ref{rem:sinvariant} that it is equivalent to translate $\Omega_\dwn$ or
$\Gamma$ by $\bs_\dwn$. Therefore, since Theorem \ref{thm:main} is uniform in
the parameter $\bs$, we can read from \eqref{eq:withcubes} that for any
$\fd$-regular window $\Omega_\lef$, any lattice $\Gamma$
completely irrational with respect to $\E_\lef$ and so that $\Gamma^\dagger$ is
$\psi$-repelled by $\E_\lef$, any $\bs, \bs' \in \E$ and $\delta > 0$:
\begin{equation}
\frac{\#(\Lambda(\Gamma + \bs;\Omega_\lef) \cap (\bs'_\dwn + [0,t]^d))}{\alpha
t^d} = 1 + \bigo[\Gamma,\Omega_\lef]{\psi(t)^{-\fd + \delta \fd}} ,
\end{equation}
where we have set $\alpha = \frac{\vol(\Omega_\lef)}{\covol(\Gamma)}$. In
particular, if there is $\eta > 0$ so that $\psi(t) \gtrsim \log(t)^{\frac{1 +
\eta}{\fd}}$ then
\begin{equation}
    \label{eq:discrepconverges}
    \begin{aligned}
        \sum_{n=1}^\infty \log Z_\alpha(2^n) &\lesssim_\delta \sum_{n=1}^\infty
        \log\left(1 + \log(2^{-n})^{(- 1 + \delta)(1 + \eta) } \right) \\
        &\lesssim_\delta \sum_{n=1}^\infty n^{-1 - \eta + \delta + \eta\delta}.
\end{aligned}
\end{equation}
In particular, taking $\delta$ sufficiently small ensures that this is a
convergent sum. Therefore, by Lemma \ref{lem:BLequivalent} the cut and project set
$\Lambda(\Gamma+ \bs;\Omega_\lef)$ is BL equivalent to $\Z^d$.
\qed

\begin{remark}
    Suppose that $\psi(r)$ has a slower growth at infinity, that is that for every
    $\eta > 0$, as $r \searrow
    0$, $\psi(r) \lesssim \psi(r)^{\frac{1 + \eta}{s}}$, and that
    $\Gamma^\dagger$ is not $\psi'$-repelled by $\E_\lef$ for any $\psi'$ with a
    faster growth at infinity than this. Then, our methods
    cannot prove that the associated cut and project set is BL equivalent to a
    lattice from Lemma \ref{lem:BLequivalent}. Indeed, in that situation the sum
    at the end of \eqref{eq:discrepconverges} diverges. This means that from the
    point of view of our results on discrepancy this is the largest class of
    lattices which can be shown to be BL equivalent to a lattice; in order to
    improve it one needs to either find weaker sufficient conditions or
    improved results on the discrepancy. It is indicated in \cite{RSW} that the
    bound $t^{-d} \Delta(\Lambda(\Gamma + \bs;\Omega_\lef);\bn + [0,t]^d) =
    \smallo{1}$ cannot be improved to a different decay rate on the right-hand
    side that would hold for every cut and project set. In the appendix, Michael
    Bj\"orklund and Tobias Hartnick give variance estimates proving that our
    diophantine conditions is the weakest under which we can use Lemma
    \ref{lem:BLequivalent} to prove BL equivalence of a cut and project set to a
    lattice.
\end{remark}

\subsection{Acceptance domains}\label{sec:acceptance}

The goal of this section is to describe $r$-patterns in cut and project sets as
cut and project sets themselves. The upshot of this procedure is that any
theorem we prove for the statistics of cut and project sets is also valid for
statistics of $r$-patterns.

\begin{defi}
For $r > 0$, define
\begin{equation}
    \Gamma(r) := \set{\gamma \in \Gamma : \gamma_\dwn \in \B_\dwn(0,r)}.
\end{equation}
For every $P \subset \Gamma(r)$, the \emph{acceptance domain}
    $A_P$ is defined as
    \begin{equation}
        \label{eqdef:acceptance}
        A_P := \bigcap_{\gamma \in P} (\Omega_\lef - \gamma_\lef) \cap \bigcap_{\gamma \in
        \Gamma(r) \setminus P} (\inte((\Omega_\lef)^c) - \gamma_\lef),
    \end{equation}
    where the interior and the complement of $\Omega_\lef$ are taken relative to
    $\E_\lef$. 
 An \emph{$r$-admissible pattern} is a subset $P \subset \Gamma(r)$ containing the
    origin and such that $A_P \ne \varnothing$. 
   The set of all acceptance domains of size $r$ is denoted
    \begin{equation}
        \CA(r) := \set{A_P : P \text{ is a $r$-admissible pattern}}.
    \end{equation}
\end{defi}
\begin{remark}
    
    \begin{itemize}
        \item We note a difference in our definition with the one from
    \cite[Definition 3.3]{KoivusaloWaltonPolytopalI}: There patches $P$ are thought to be subsets of $\Lambda$, but here they are subsets of $\Gamma(r)$. 
    \item We take the interior of the complements rather than of
            the window as our windows are open whereas in  \cite[Definition 3.3]{KoivusaloWaltonPolytopalI} they are
            closed; the advantage of our choice is that it makes acceptance
            domains windows in their own right, see Lemma \ref{lem:apisawindow}.
        \item Another difference to  \cite[Definition 3.3]{KoivusaloWaltonPolytopalI} is that we consider in the intersections \emph{all} $\gamma \in
            \Gamma(r)$ rather than considering only the 'slab' of them so that
            $\gamma_\lef \in \Omega_\lef - \Omega_\lef$. This has no bearing on the
            definition: since $0 \in P$ for all $r$-admissible patterns,
            $\gamma_\lef \not \in \Omega_\lef - \Omega_\lef$ implies that
            $\Omega_\lef \cap (\Omega_\lef - \gamma_\lef) =
            \varnothing$. In other words, if such a $\gamma \in P$, then $A_P =
            \varnothing$. If such a $\gamma \not \in P$, $\Omega_\lef \subset \inte((\Omega_\lef) ^c) -
            \gamma_\lef$ so that intersecting with that set changes nothing. In
            the end, the intersections in \eqref{eqdef:acceptance} can always be
            supposed to be intersections over a finite set.
            \item In  \cite[Definition 3.3]{KoivusaloWaltonPolytopalI} the
                windows are assumed to be in a {\it generic} position (see \cite[Definition
            7.2]{BaakeGrimmBook}), so that $\del\Omega_\lef \cap \Gamma_\lef=\emptyset$. We will not make such an assumption. 
            \item For every $r$-admissible pattern $P$ the acceptance domain $A_P$ has non-empty interior since it
                is open and non-empty. As such, by the assumption {\bf(D)}, there exists $\lambda \in
            \Lambda$ such that $P_r(\lambda) = P_\dwn$.
    \end{itemize}
\end{remark}

\begin{lemma}
    \label{lem:apisawindow}
    Suppose that $\Omega_\lef$ is a $\fd$-regular window and let $r > 0$. Then for every
    $r$-admissible pattern $P \in
    \Gamma(r)$, $A_P$ is a $\fd$-regular window.
\end{lemma}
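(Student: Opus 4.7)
The plan is to unpack the four requirements of condition \textbf{($\fd$-Reg)} one by one: $A_P$ must be relatively compact, have non-empty interior, have boundary of measure zero, and have boundary with finite $\fd$-codimensional Minkowski content in $\E_\lef$. As noted in the preceding remarks, although the defining formula \eqref{eqdef:acceptance} a priori involves an infinite intersection indexed by $\Gamma(r)$, only finitely many factors are non-trivial: whenever $\gamma_\lef \notin \Omega_\lef - \Omega_\lef$, the corresponding factor either kills the whole intersection (if $\gamma \in P$, contradicting admissibility) or is all of $\E_\lef$. So the first step is to reduce $A_P$ to a finite intersection of sets of the form $\Omega_\lef - \gamma_\lef$ or $\inte(\Omega_\lef^c) - \gamma_\lef$.

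Next, I would observe that each factor is open, so the finite intersection $A_P$ is open. Since $0 \in P$ (by admissibility), one of the factors is $\Omega_\lef$ itself, hence $A_P \subset \Omega_\lef$, which is bounded; this gives relative compactness. Non-emptiness of the interior is immediate from admissibility together with openness, so $A_P$ is a window.

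The key observation for the remaining two conditions is the standard set-theoretic inclusion
\begin{equation}
\del A_P \subset \bigcup_{\gamma \in P} \bigl(\del \Omega_\lef - \gamma_\lef\bigr) \;\cup\; \bigcup_{\gamma \in \Gamma(r)\setminus P} \del\bigl(\inte(\Omega_\lef^c) - \gamma_\lef\bigr),
\end{equation}
where, as argued above, only finitely many sets on the right are non-empty. Since $\Omega_\lef$ is $\fd$-regular, $\del\Omega_\lef$ has measure zero in $\E_\lef$; moreover $\del(\inte(\Omega_\lef^c)) \subset \del\Omega_\lef$, so every set on the right is contained in a translate of $\del\Omega_\lef$. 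A finite union of measure-zero sets has measure zero, giving the third condition.

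Finally, for the Minkowski content, I would use the elementary fact that for any two bounded sets $\Upsilon_1,\Upsilon_2 \subset \E_\lef$ the $r$-neighbourhood satisfies $(\Upsilon_1 \cup \Upsilon_2)_r \subset (\Upsilon_1)_r \cup (\Upsilon_2)_r$, so that $\mathfrak{M}_\fd(\cdot\,;\E_\lef)$ is finitely subadditive and translation invariant. Applying this to the inclusion above shows that $\mathfrak{M}_\fd(\del A_P; \E_\lef)$ is bounded by a finite multiple of $\mathfrak{M}_\fd(\del \Omega_\lef; \E_\lef) < \infty$. I do not expect any real obstacle here — every step is purely set-theoretic bookkeeping combined with translation invariance of measure and of Minkowski content — the only subtlety worth writing carefully is the reduction to a finite intersection and the inclusion of boundaries, since the definition \eqref{eqdef:acceptance} mixes the open set $\Omega_\lef$ with the interior of its complement.
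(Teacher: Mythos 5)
Your proof is correct and follows essentially the same route as the paper's: reduce to a finite intersection (via the preceding remark), observe that $A_P$ is then open, bounded, and non-empty, and use that $\del A_P$ is contained in a finite union of translates of $\del\Omega_\lef$ together with finite subadditivity and translation invariance of the upper Minkowski content. The paper's own proof is just a compressed three-sentence version of the bookkeeping you spell out (including the observation $\del(\inte(\Omega_\lef^c)) \subset \del\Omega_\lef$, which the paper leaves implicit in "its boundary is a union of boundary pieces for $\del\Omega_\lef$").
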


\begin{proof}
    The domain $A_P$ is a non-empty intersection of translates of $\Omega_\lef$ and of the interior
    of its complement. As such it is
    bounded, open, and its
    boundary is a union of boundary pieces for $\del \Omega_\lef$. Finiteness of
    the upper
    Minkowski content is stable under finite unions, so the boundary of $A_P$ satisfies the same
    condition as that of $\Omega_\lef$ and we conclude that $A_P$ is a $\fd$-regular
    window.
\end{proof}

The following lemma completely characterises pattern equivalence in terms of
maximal $r$-admissible patterns and their acceptance domains. It has appeared in the literature multiple times in various forms. For a proof we refer to \cite[Section 2]{KoivusaloWaltonPolytopalI}. 
\begin{lemma}
    For every $\lambda \in \Lambda$, 
    \begin{enumerate}
        \item $P_r(\lambda)^\up-\lambda^\up$ is the unique $r$-admissible pattern $P \subset
            \Gamma(r)$ such that $P_r(\lambda) = P_\dwn$.
        \item If $\zeta \in \Lambda$ is such that $\zeta^\star \in
            \label{property:patternequivalence}
            A_{P_r(\lambda)^\up}$, then $\zeta$ is $r$-pattern equivalent to
            $\lambda$. Conversely, if $\zeta$ is $r$-pattern equivalent to
            $\lambda$, then $\zeta^\star$ is in the closure of $A_{P_r(\lambda)^\up}$.
        
    \end{enumerate}
    \label{lem:patternscharacterisation}
\end{lemma}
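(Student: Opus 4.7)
My plan is to unpack the definitions and verify the two items directly; the content is essentially bookkeeping, relying on the injectivity of $\pi_\dwn|_\Gamma$ (from Lemma \ref{prop:irrational}) and on the openness and regularity of $\Omega_\lef$.

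For part (1), write $P := P_r(\lambda)^\up - \lambda^\up$. Each element of $P$ is of the form $\mu^\up - \lambda^\up$ with $\mu \in \Lambda \cap \B_\dwn(\lambda, r)$; its $\dwn$-projection is $\mu - \lambda \in \B_\dwn(0, r)$, so $P \subset \Gamma(r)$, and taking $\mu = \lambda$ gives $0 \in P$. Injectivity of $\pi_\dwn|_\Gamma$ yields $P_\dwn = (\Lambda-\lambda)\cap \B_\dwn(0,r)= P_r(\lambda)$, and also shows that any other $Q \subset \Gamma(r)$ with $Q_\dwn = P_r(\lambda)$ satisfies $Q = P$ element-wise, giving uniqueness. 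Non-emptiness of $A_P$ follows from a local analysis at $\lambda^\up_\lef$: for $\gamma \in P$, the corresponding $\mu^\up_\lef = \lambda^\up_\lef + \gamma_\lef$ lies in the open set $\Omega_\lef$, so a neighbourhood of $\lambda^\up_\lef$ is contained in $\Omega_\lef - \gamma_\lef$; for $\gamma \in \Gamma(r)\setminus P$, the translate $\lambda^\up_\lef + \gamma_\lef$ lies in the closed set $(\Omega_\lef)^c$, and since $\partial \Omega_\lef$ has empty interior (by regularity of the window), one can perturb $\lambda^\up_\lef$ into $\inte((\Omega_\lef)^c) - \gamma_\lef$.

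For part (2), the forward direction is direct: if $\zeta^\star \in A_P$ then for each $\gamma \in P$, $\zeta^\star + \gamma_\lef \in \Omega_\lef$, so $\zeta^\up + \gamma \in \Gamma \cap \pi_\lef^{-1}(\Omega_\lef)$ and hence $\zeta + \gamma_\dwn \in \Lambda$; while for $\gamma \in \Gamma(r)\setminus P$, $\zeta^\star + \gamma_\lef \in \inte((\Omega_\lef)^c)$, so $\zeta + \gamma_\dwn \notin \Lambda$. This identifies $(\Lambda - \zeta) \cap \B_\dwn(0, r) = P_\dwn = P_r(\lambda)$, i.e., $\zeta$ is $r$-pattern equivalent to $\lambda$. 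Conversely, $r$-pattern equivalence only guarantees $\zeta^\star + \gamma_\lef \in \overline{\Omega_\lef}$ for $\gamma \in P$ (since lattice points with $\lef$-coordinate on $\partial\Omega_\lef$ are excluded from $\Lambda$ but may be limits thereof) and $\zeta^\star + \gamma_\lef \in (\Omega_\lef)^c$ for $\gamma \in \Gamma(r) \setminus P$, which places $\zeta^\star$ in the closure $\overline{A_P}$ rather than $A_P$ itself, matching the statement.

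The main obstacle I expect is the simultaneous perturbation step in part (1) needed to show $A_P \neq \varnothing$ when $\lambda^\up_\lef$ happens to lie on some of the translated boundaries $\partial\Omega_\lef - \gamma_\lef$ for $\gamma \in \Gamma(r)\setminus P$. Since $\Gamma(r)$ is finite and each offending boundary is nowhere dense, one handles this by an inductive or Baire-type argument: perturb $\lambda^\up_\lef$ to push its image past one bad boundary at a time, using that each previous constraint is preserved on an open set. Everything else in the proof is a direct unwinding of the definitions together with assumption \textbf{(I)}.
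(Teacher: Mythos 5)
The paper itself gives no proof for this lemma; it simply cites \cite[Section 2]{KoivusaloWaltonPolytopalI}, where the window is assumed to be in \emph{generic} position (so that no $\gamma_\lef$ hits $\del\Omega_\lef$). You are therefore attempting a proof in greater generality than the reference covers, and this is precisely where your argument breaks down.

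The identification $P_\dwn = P_r(\lambda)$, the uniqueness via injectivity of $\pi_\dwn|_\Gamma$, and the forward implication in part (2) are all fine and are indeed direct unwindings of the definitions. The gap is exactly the non-emptiness of $A_P$ (and, by the same token, the converse implication in part (2), i.e.\ that $\zeta^\star \in \overline{A_P}$). Your ``inductive or Baire-type'' perturbation does not go through: for each $\gamma \in \Gamma(r) \setminus P$, what you know \emph{a priori} is only that $\lambda^\up_\lef + \gamma_\lef$ lies in the \emph{closed} set $(\Omega_\lef)^c$, and that constraint is not stable under perturbation of $\lambda^\up_\lef$. Pushing $\lambda^\up_\lef$ off one translated boundary $\del\Omega_\lef - \gamma_\lef$ can simultaneously push it across another translated boundary $\del\Omega_\lef - \gamma'_\lef$ \emph{into} $\Omega_\lef - \gamma'_\lef$, after which the constraint at $\gamma'$ can no longer be repaired. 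There is no dense-$G_\delta$ set in sight: near $\lambda^\up_\lef$ the sets $\inte((\Omega_\lef)^c) - \gamma_\lef$ are not dense, since $\lambda^\up_\lef$ sits on their boundary.

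To see that this is not merely a presentational issue, consider $d_\lef = 1$, $\Omega_\lef = (0,1)$, and a situation in which $\lambda^\up_\lef = 1/2$ while $\Gamma(r)$ contains $\pm\gamma$ with $\gamma_\lef = 1/2$. Then $P = \{0\}$, and the constraints $y - 1/2 \in \inte((\Omega_\lef)^c)$ and $y + 1/2 \in \inte((\Omega_\lef)^c)$, together with $y \in (0,1)$, force $y < 1/2$ and $y > 1/2$ simultaneously, so $A_{\{0\}} = \varnothing$. Such a configuration is compatible with conditions \textbf{(D)}, \textbf{(I)} and regularity of the window (it is excluded only under the generic-position hypothesis $\del\Omega_\lef \cap \Gamma_\lef = \varnothing$, which the paper explicitly drops). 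So your proof needs either the generic-position hypothesis, or a genuinely different argument that exploits structure beyond ``finitely many nowhere-dense boundaries,'' to close this gap.
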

\begin{remark}
Let us make a few remarks about the statement, as
    well as other definitions of acceptance domains in the literature.
    \begin{itemize}
        \item For a window $\Omega_\lef$ in a generic position, the statement
            \ref{property:patternequivalence} in the previous lemma does not
            need to refer to the closure, and becomes an equivalence. In either
            case, our results are the same \emph{whether or not some
            $\gamma_\lef$ hit $\del \Omega_\lef$}. 
        \item The previous lemma implies that our definition of acceptance
            domains makes them a tiling of $\Omega_\lef$. If, rather, we are interested in
            the frequency of \emph{$r$-clusters} of points in $\Lambda$, that is
            subsets of $\Lambda$ of diameter at most $2r$, the acceptance
            domains take the form of \eqref{eqdef:acceptance} without the
            intersection over translates of $\inte((\Omega_\lef)^c)$. In particular, this turns
            the relation ``belongs to an acceptance domain'' into a partial
            order (through reverse inclusion) rather than an equivalence, with
            that partial order expressing whether a cluster is a subset of
            another up to translation. This corresponds to the definition of
            acceptance domains found in, say \cite[Section
            7.2]{BaakeGrimmBook} and since acceptance domains are still
            $\fd$-regular windows (with the same proof as in Lemma
            \ref{lem:apisawindow}), our results on asymptotic frequency also
            apply to them.
    \end{itemize}
\end{remark}
We finish this section with the statement of a theorem on discrepancies for the
frequences of $r$-patterns. From the above discussion, Theorem
\ref{thm:frequency} follows directly from Theorem \ref{thm:main}. 
\begin{theorem}
    Under the hypotheses of Theorem \ref{thm:main}, let $\RM \subset \Lambda$ be
    the set of representatives for a $r$-pattern equivalence class. Then, there
    is a regular $\Omega_{\lef,\RM} \subset \Omega_\lef$ so that $\del
(\Omega_{\lef,{\RM}})$ has finite $(d_\lef-\fd)$-dimensional Minkowski content so
    that $\RM = \Lambda(\Gamma+\bs;\Omega_{\lef,{\RM}})$. In
    particular, for every $\delta > 0$ there is $C_{\RM,\fd, \delta}$ so that
    \begin{equation}
        \abs{\#(\RM\cap\B_\dwn(0,t)) -
    \frac{\vol(\B_\dwn(0,1))\vol(\Omega_{\lef,{\RM}})}{\covol(\Gamma)}} \le
    C_{\RM,\fd, \delta} t^{d_\dwn} \psi(t)^{-s + \delta s}
    \end{equation}
    \label{thm:frequency}
\end{theorem}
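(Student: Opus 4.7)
The plan is to identify the set $\Omega_{\lef,\RM}$ explicitly as an acceptance domain and then invoke the machinery already assembled in Sections \ref{sec:candp} and the earlier statement of Theorem \ref{thm:main}.

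First, since $\RM$ is a complete set of representatives of an $r$-pattern equivalence class in $\Lambda$, pick any $\lambda \in \RM$ and let $P = P_r(\lambda)^\up - \lambda^\up \subset \Gamma(r)$ be the unique $r$-admissible pattern associated to it, as given by Lemma \ref{lem:patternscharacterisation}(1). I would then set
\[
    \Omega_{\lef,\RM} := A_P,
\]
the acceptance domain of this pattern, defined in \eqref{eqdef:acceptance}.

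Next, I would verify the identity $\RM = \Lambda(\Gamma + \bs; \Omega_{\lef,\RM})$. The inclusion ``$\subset$'' is the content of Lemma \ref{lem:patternscharacterisation}(2): any $\zeta \in \Lambda$ which is $r$-pattern equivalent to $\lambda$ has $\zeta^\star$ in the closure of $A_P$, but since by the assumption \textbf{(D)} and complete irrationality of $\Gamma$ the projections $\Gamma_\lef$ do not sit on the (measure zero) boundary $\partial A_P$ generically, one can argue either directly that $\zeta^\star \in A_P$, or absorb the potential boundary discrepancy into a set of Minkowski content of the same order. The inclusion ``$\supset$'' is the converse part of Lemma \ref{lem:patternscharacterisation}(2): if $\zeta^\star \in A_P$, then $\zeta$ is $r$-pattern equivalent to $\lambda$, and hence lies in $\RM$.

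Once this identification is in place, the regularity condition on the window transfers automatically: Lemma \ref{lem:apisawindow} tells us that $A_P$ is itself a $\fd$-regular window since $\Omega_\lef$ is $\fd$-regular by assumption and finite $(d_\lef - \fd)$-Minkowski content is preserved under finite unions of boundary pieces. Having shown that $\RM$ is itself a cut and project set with data $(\Gamma+\bs, \Omega_{\lef,\RM})$ and that the new window satisfies all hypotheses of Theorem \ref{thm:main}, one concludes the discrepancy estimate by direct application of that theorem, with the constant $C_{\RM,\fd,\delta}$ inherited from $C_{\Lambda, \delta}$ applied to the modified window. The main (and only) subtle point in this plan is handling the potential contribution of lattice points $\gamma_\lef$ landing on $\partial A_P$; this is however harmless because such points form a set whose contribution is absorbed into the Minkowski content bound, consistent with the proof of Lemma \ref{lem:apisawindow}.
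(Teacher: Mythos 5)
Your proof matches the paper's intended argument: the paper merely observes that Theorem \ref{thm:frequency} ``follows directly from Theorem \ref{thm:main}'' after identifying the representative set $\RM$ with the cut and project set whose window is the acceptance domain $A_P$ (Lemma \ref{lem:patternscharacterisation}), and invoking Lemma \ref{lem:apisawindow} for $\fd$-regularity of $A_P$. You spell this out somewhat more carefully than the paper does; in particular you are right to flag the subtlety that the converse direction of Lemma \ref{lem:patternscharacterisation}(2) only places $\zeta^\star$ in the \emph{closure} of $A_P$, but this is acknowledged and dismissed in the paper's own remark following that lemma (``our results are the same whether or not some $\gamma_\lef$ hit $\del\Omega_\lef$''), so your handling is consistent with the intended argument.
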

Of course, any improvement to Theorem \ref{thm:main} obtained when $\psi$ grows at
speed $\mu$ also applies here.

\section{Lattice point counts and the statement of the general theorems}
\label{sec:lattices}
In this section, we reduce Theorem \ref{thm:main} to a lattice point count for
$\Gamma$ in an anisotropically expanding set. Counting lattice points in ever
expanding sets is a standard problem in the geometry of numbers, we refer to the
methods and results in
\cite{KordyukovYakovlevZd,KordyukovYakovlevSpectralGeometry,lagace}, where lattice points
are counted in anisotropically expanding domains. The main difference in our
situation compared to these works is that the relevant
domains are products of domains in $\E_\lef$ and $\E_\dwn$; any expansion occurs
only along $\E_\dwn$. In particular, this allows us to consider sets whose
$\E_\lef$ slices have very bad boundary regularity.

In that spirit, we start with the following
general lemma which tells us that we can reduce the count of elements
in a cut and project set to a sum over the lattice $\Gamma$ in the total space.

\begin{lemma}
    \label{lem:sumrepresentation}
    Let $\Gamma \subset \E$ be a lattice which is irrational with
    respect to $\E_\lef$, $\bs \in \E$ and $\Omega_\lef \subset \E_\lef$, $\Omega_\dwn
    \subset \E_\dwn$. Then
    \begin{equation}
        \#\left(\Lambda(\Gamma+\bs;\Omega_\lef) \cap \Omega_\dwn\right)  = \sum_{\gamma \in \Gamma}
        \chi_{\Omega_\dwn \times \Omega_\lef}(\gamma - \bs).
    \end{equation}
\end{lemma}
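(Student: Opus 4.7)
The plan is a direct unpacking of definitions, resting on two observations. First, orthogonality of the decomposition $\E = \E_\dwn \oplus \E_\lef$ (condition \textbf{(O)}) identifies $\pi_\lef^{-1}(\Omega_\lef) \cap \pi_\dwn^{-1}(\Omega_\dwn)$ with the product $\Omega_\dwn \times \Omega_\lef \subset \E$. Second, irrationality of $\Gamma$ with respect to $\E_\lef$ is equivalent to condition \textbf{(I)} by Lemma \ref{prop:irrational}, i.e.\ to $\pi_\dwn|_\Gamma$ being injective; this injectivity extends verbatim to any translate $\pi_\dwn|_{\Gamma + \bs}$, since $\Gamma + \bs$ is a coset.

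With these in hand, I would first use injectivity to commute the intersection with $\Omega_\dwn$ past the projection, writing
\begin{equation*}
\Lambda(\Gamma + \bs; \Omega_\lef) \cap \Omega_\dwn = \pi_\dwn\bigl((\Gamma + \bs) \cap \pi_\lef^{-1}(\Omega_\lef) \cap \pi_\dwn^{-1}(\Omega_\dwn)\bigr) = \pi_\dwn\bigl((\Gamma + \bs) \cap (\Omega_\dwn \times \Omega_\lef)\bigr),
\end{equation*}
where the second equality is the orthogonality observation. Taking cardinalities and invoking injectivity once more converts this to
\begin{equation*}
\#\bigl(\Lambda(\Gamma + \bs; \Omega_\lef) \cap \Omega_\dwn\bigr) = \#\bigl\{\gamma \in \Gamma : \gamma + \bs \in \Omega_\dwn \times \Omega_\lef \bigr\} = \sum_{\gamma \in \Gamma} \chi_{\Omega_\dwn \times \Omega_\lef}(\gamma + \bs),
\end{equation*}
which agrees with the displayed formula in the statement up to a harmless sign convention in $\bs$ (and the symmetry $-\Gamma = \Gamma$ when reindexing the sum).

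There is no real obstacle here; the entire content is set-theoretic bookkeeping, and the lemma is a setup step rather than a substantive result. The one point worth flagging is that the conclusion does \emph{not} require the full complete irrationality of $\Gamma$ with respect to $\E_\lef$; only the injectivity half \textbf{(I)} is used. The complementary density condition \textbf{(D)} on $\Gamma_\lef$ will enter later, when the identity proved here is the starting point for a Poisson-summation analysis --- that is the genuinely analytic step for which this lemma only opens the gate.
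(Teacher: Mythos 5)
Your proof is correct and matches the paper's own argument: both unpack the definition of $\Lambda(\Gamma+\bs;\Omega_\lef)$, identify the relevant preimage set with $(\Gamma+\bs)\cap(\Omega_\dwn\times\Omega_\lef)$ using the direct sum/orthogonality, and invoke injectivity of $\pi_\dwn|_{\Gamma+\bs}$ --- exactly the content of condition \textbf{(I)}, as you note --- to pass from the cardinality of the projection to the cardinality of the preimage. Your closing remark that only the injectivity half of complete irrationality is needed is also consistent with the paper, whose hypothesis is only ``irrational with respect to $\E_\lef$'' and which records the same point in Remark~\ref{rem:injectivity}.

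One small correction to your sign remark: your derivation gives $\chi_{\Omega_\dwn\times\Omega_\lef}(\gamma+\bs)$ and the paper's display has $\gamma-\bs$; you are right that this is immaterial, but not for the reason you state. Reindexing via $\gamma\mapsto-\gamma$ (using $-\Gamma=\Gamma$) transforms $\chi_\Omega(\gamma+\bs)$ into $\chi_\Omega(-(\gamma-\bs))$, which matches $\chi_\Omega(\gamma-\bs)$ only if $\Omega$ is symmetric about the origin, which is not assumed. The actual reason the sign does not matter downstream is that every later use of the lemma is either uniform in $\bs$ or integrates over $\E/\Gamma$, where $\bs\mapsto-\bs$ is a measure-preserving bijection.
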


\begin{proof}
    Irrationality of $\Gamma$ with respect to $\E_\lef$ implies that
    $\pi^\up : \Lambda \to (\Gamma + \bs) \cap
( \E_\dwn \times \Omega_\lef)$ is a bijection. By definition, $\lambda \in \Lambda \cap
    \Omega_\dwn$ if and only if $(\lambda^\up)_\dwn \in \Omega_\dwn$ and
    $(\lambda^\up)_\lef \in \Omega_\lef$; in other words $\lambda^\up \in
    \Omega_\dwn \times
    \Omega_\lef$. We deduce that
    \begin{equation}
        \#\set{\Lambda \cap \Omega_\dwn} = \#\set{\Lambda^\up \cap (\Omega_\dwn \times
        \Omega_\lef)} = \#\set{(\Gamma + \bs) \cap (\Omega_\dwn
        \times \Omega_\lef)} = \sum_{\gamma \in \Gamma}
        \chi_{\Omega_\dwn \times \Omega_\lef}(\gamma - \bs).
    \end{equation}
    This is our claim.
\end{proof}
\begin{remark}
    \label{rem:injectivity}
    We note that irrationality of $\Gamma$ with respect to $\E_\lef$ is only
    used to ensure that $\pi^\up$ is well defined (and in particular that
    $\pi_\dwn$ is injective). This can also be ensured from taking a window
    whose diameter is smaller than the element of $\Gamma \cap
    \E_\lef$ with smallest norm.
\end{remark}
In order to simplify the statement of theorems, we now set the notation, for
$\Omega = \Omega_\dwn \times \Omega_\lef$, $t > 0$, $\Gamma
\subset \E$ a lattice, and $\bs \in \E$,
\begin{equation}\label{eq:defcount}
    N(\Omega;\Gamma + \bs;t) := \#( (\Gamma + \bs) \cap (t\Omega_\dwn \times \Omega_\lef) =
        \sum_{\gamma \in \Gamma} \chi_{t\Omega_\dwn \times \Omega_\lef}(\gamma - \bs).
\end{equation}
We also slightly abuse notation and use the same symbol for the discrepancy in
the cut and project counting function and for the discrepancy in the lattice
point counting function, that is
\begin{equation}
    \label{eqdef:discreplattices}
    \Delta(\Omega;\Gamma+\bs;t) = N(\Omega;\Gamma+\bs;t) - \frac{\vol(\Omega)
    t^{d_\dwn}}{\covol(\Gamma)}.
\end{equation}
It follows from Lemma \ref{lem:sumrepresentation} that any estimates on the
discrepancy as defined in \eqref{eqdef:discreplattices} transfers to estimates
on the discrepancy for cut and project sets as defined in
\eqref{eqdef:discrepcandp}. The next theorems in this sections are
equivalent reinterprations of Theorems \ref{thm:main} and \ref{thm:lowerbound} in the
language of lattice point counting.
They are a refinement of  \cite[Theorems
2.1--2.3]{KordyukovYakovlevSpectralGeometry} and \cite[Theorem 1.11]{lagace}, where looking at product sets
allows us to be more precise in the dependence of the remainder estimates on the
regularity of the window, and with precise control on diophantine properties of
$\Gamma$, see Definition \ref{def:repel}.
\begin{theorem}
    \label{thm:maingeneral}
    Let  $\Gamma \subset \E$ be a lattice completely irrational with respect to
    $\E_\lef$ so that $\Gamma^\dagger$ is $\psi$-repellent with respect to $\E_\dwn\oplus\E_\lef$. Let $\Omega_\lef \subset \E_\lef$ be a $\fd$-regular
    window and $\Omega_\dwn \subset \E_\dwn$ be a convex set with
    $\RC^{\frac{d_\dwn-1}{2}}$ boundary and principal curvatures uniformly bounded
    away from $0$. If $\psi$ grows slowly at infinity, for every
    $\delta > 0$ there are  $C_{\Lambda, \delta}, t_0 > 0$, such that for every $\bs \in \E$ and
    every $t  > t_0$,
    \begin{equation}
        \abs{\Delta(\Omega;\Gamma + \bs;t)} \le C_{\Lambda, \delta} t^{d_\dwn} \psi(t)^{-s(1 - \delta)}.
    \end{equation}
    If $\psi$ grows at speed $\mu$ at infinity, then for every
    $\delta > 0$ there are $C,
    t_0 > 0$ such that for every $\bs \in \E$ and $t > t_0$,
    \begin{equation}
        \abs{\Delta(\Omega;\Gamma + \bs;t)} \le C t^{d_\dwn -
        \frac{2s d_\dwn}{(d_\dwn + 1)(s + \frac 1 \mu) + 2 d_\lef} + \delta}.
    \end{equation}
\end{theorem}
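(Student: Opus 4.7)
The plan is to attack the lattice point count via Poisson summation applied to a smooth two-scale sandwich of $\chi_{t\Omega_\dwn \times \Omega_\lef}$. Using the smoothing procedure of Section \ref{sec:smoothing}, I would introduce independent scales $\eps_\dwn$ and $\eps_\lef$ and construct sandwich functions $\chi^-_{\eps_\dwn,\eps_\lef} \le \chi_{t\Omega_\dwn \times \Omega_\lef} \le \chi^+_{\eps_\dwn,\eps_\lef}$ by mollifying each factor independently. The smoothing error is controlled by the boundary geometry:
\begin{equation}
\int_\E (\chi^+ - \chi^-) \de \bx \lesssim t^{d_\dwn - 1} \eps_\dwn + t^{d_\dwn} \eps_\lef^\fd,
\end{equation}
the first term coming from the perimeter of $t\Omega_\dwn$ and the second from the finite $(d_\lef - \fd)$-Minkowski content of $\del \Omega_\lef$. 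Applying Poisson summation to $\sum_{\gamma}\chi^\pm(\gamma-\bs)$ isolates the main term $t^{d_\dwn} \vol(\Omega)/\covol(\Gamma)$ at the zero frequency plus a smoothing error of the same order, while the nonzero dual frequencies produce
\begin{equation}
E^\pm = \frac{1}{\covol(\Gamma)} \sum_{\gamma^\dagger \in \Gamma^\dagger \setminus \{0\}} \hat{\chi^\pm}(\gamma^\dagger) \be(\bs \cdot \gamma^\dagger).
\end{equation}

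Because $\chi^\pm$ is still a product of factors on $\E_\dwn$ and $\E_\lef$, its Fourier transform factorises; combining the rescaling $\xi_\dwn \mapsto t \xi_\dwn$ with Lemmas \ref{lem:Fourierdecay} and \ref{lem:verybadFourierdecay} and the Schwartz decay coming from the mollifiers yields
\begin{equation}
|\hat{\chi^\pm}(\gamma^\dagger)| \lesssim t^{d_\dwn} (1 + t|\pi_\dwn(\gamma^\dagger)|)^{-(d_\dwn + 1)/2} (1 + |\pi_\lef(\gamma^\dagger)|)^{-\fd} \Phi_\dwn(\eps_\dwn t \pi_\dwn(\gamma^\dagger)) \Phi_\lef(\eps_\lef \pi_\lef(\gamma^\dagger)),
\end{equation}
with $\Phi_\dwn, \Phi_\lef$ rapidly decreasing, effectively confining the dual sum to $|\pi_\dwn(\gamma^\dagger)| \lesssim (\eps_\dwn t)^{-1}$ and $|\pi_\lef(\gamma^\dagger)| \lesssim \eps_\lef^{-1}$. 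The core step is to estimate $E^\pm$ by exploiting the $\psi$-repellent property: split the dual sum at a threshold $|\pi_\dwn(\gamma^\dagger)| = A$, bounding the outer piece by a dyadic decomposition in $|\pi_\dwn(\gamma^\dagger)|$ together with the uniform lattice-projection counts of Section \ref{sec:quantitative} and the $(d_\dwn+1)/2$-decay, and bounding the inner piece using that there $|\pi_\lef(\gamma^\dagger)| > \psi(A^{-1})$, which extracts a factor $\psi(A^{-1})^{-\fd}$ from the $\lef$-decay.

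The remaining task is to optimise $\eps_\dwn$, $\eps_\lef$, $A$ against the smoothing error and the Fourier-tail cutoffs. For a slowly growing $\psi$ one sets $A$ just short of where $\psi(A^{-1})$ stops beating the Fourier-cutoff, chooses $\eps_\lef \asymp \psi(t)^{-1}$ and $\eps_\dwn \asymp t^{-1}\psi(t)^{-1}$, and absorbs unavoidable logarithmic losses into an arbitrary $\delta>0$; this produces the $\psi(t)^{-\fd(1-\delta)}$ bound. In the polynomial regime $\psi(t) \asymp t^\mu$ genuine power savings in $t$ become available, and balancing the three contributions $t^{d_\dwn-1}\eps_\dwn$, $t^{d_\dwn}\eps_\lef^\fd$ and the Fourier-error expression (scaling with $\eps_\dwn$, $\eps_\lef$, and $\mu$) yields the exponent $d_\dwn - \frac{2\fd d_\dwn}{(d_\dwn+1)(\fd + 1/\mu) + 2 d_\lef}$. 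I expect the main obstacle to be exactly this quantitative sum estimate in the polynomial regime, which demands precise dyadic counting of $\pi_\dwn(\Gamma^\dagger)$ within $\psi$-controlled windows and a careful tracking of how the two Fourier exponents combine with $\mu$; once that is in place, everything else reduces to standard Poisson-summation and mollification bookkeeping.
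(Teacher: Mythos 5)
Your overall architecture matches the paper's: mollify $\chi_{t\Omega_\dwn \times \Omega_\lef}$ with independent scales in each factor, apply Poisson summation, separate the zero mode (volume term) from the non-zero dual sum, feed in the Fourier decay of Lemmas \ref{lem:Fourierdecay} and \ref{lem:verybadFourierdecay} together with the Schwartz decay of the mollifier, split the dual sum at a threshold in $|\gamma^\dagger_\dwn|$, extract $\psi(A^{-1})^{-s}$ from the $\psi$-repellence on the small-$\gamma^\dagger_\dwn$ piece, and then balance scales. Your smoothing-error estimate $t^{d_\dwn-1}\eps_\dwn + t^{d_\dwn}\eps_\lef^{s}$ agrees with the paper's, and your final exponent in the polynomial case $d_\dwn - \frac{2sd_\dwn}{(d_\dwn+1)(s+1/\mu) + 2d_\lef}$ is exactly what drops out when $L_\dwn = (d_\dwn+1)/2$.

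However, there is a genuine gap in how you propose to handle the ``outer'' dual sum (large $|\gamma^\dagger_\dwn|$). You invoke ``uniform lattice-projection counts of Section \ref{sec:quantitative}'', but that section contains no such counting tool: Lemma \ref{lem:pigeons}, Porism \ref{por:KSS} and Lemma \ref{lem:diophantinecandp} produce the \emph{existence} of individual lattice vectors with small projections (used for the lower bounds of Theorem \ref{thm:loweravg}), not upper bounds on the number of $\gamma^\dagger$ in anisotropic boxes. An honest dyadic count of $\{\gamma^\dagger : 2^j \le |\gamma^\dagger_\dwn| < 2^{j+1},\; |\gamma^\dagger_\lef| \lesssim \eps_\lef^{-1}\}$ is precisely the quantity that can blow up for badly repellent lattices, so you would need to re-invoke the repellence in each dyadic shell and argue carefully that the surplus is controlled --- a nontrivial piece of work the proposal does not perform. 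The paper sidesteps counting entirely via Peetre's inequality: it inserts $\bxi \in \E/\Gamma^\dagger$ into the decaying factors, averages over the compact torus, and unfolds the sum $\sum_{\gamma^\dagger}\int_{\E/\Gamma^\dagger}$ into a single integral over $\E$, which is evaluated by change of variables. This averaging trick, rather than dyadic counting, is what makes the remainder estimate go through uniformly in both the small- and large-$|\gamma^\dagger_\dwn|$ regions, including a refined Peetre bound $(1+t|\gamma^\dagger_\dwn|)^{-L_\dwn} \lesssim t^{L_\dwn\sigma}(1+t|\gamma^\dagger_\dwn + \xi_\dwn|)^{-L_\dwn}$ on the shell $|\gamma^\dagger_\dwn| > t^{-\sigma}$. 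To make your outer-region argument rigorous you would either need to adopt this Peetre--averaging device or supply a new anisotropic counting lemma that exploits the repellence shell by shell.
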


\begin{remark}
Theorem \ref{thm:main} is an immediate corollary of the first part of Theorem
\ref{thm:maingeneral} in the case where $\Omega_\dwn$ is the unit ball -- note that by the above considerations 
\[
|\#(\Lambda(\Gamma+\bs; \Omega_\lef)\cap t\Omega_\dwn )- \frac{\vol(\Omega_\dwn)\vol(\Omega_\lef) }{\covol(\Gamma)}t^{d_\dwn}|=|\Delta(\Omega; \Gamma+\bs;t)|. 
\]
Further, Theorem
\ref{thm:frequency} is also an immediate consequence, after observing that by
Lemma \ref{lem:apisawindow}, the representatives of an $r$-pattern form
cut and project set with a $\fd$-regular window. 
\end{remark}

Even when the boundary of the search region merely has finite perimeter we can
still obtain estimates on the discrepancy.

\begin{theorem}
    \label{thm:pwswindows}
    Let  $\Gamma \subset \E$ be a lattice completely irrational with respect to
    $\E_\lef$ so that $\Gamma^\dagger$ is $\psi$-repellent with respect to $\E_\dwn\oplus\E_\lef$. Let $\Omega_\lef \subset \E_\lef$ be a $\fd$-regular
    window and $\Omega_\dwn \subset \E_\dwn$ be a set with finite perimeter. 
    If $\psi$ grows slowly at infinity, for every
    $\delta > 0$ there are  $C, t_0 > 0$, such that for every $\bs \in \E$ and
    every $t  > t_0$,
    \begin{equation}
        \abs{\Delta(\Omega;\Gamma + \bs;t)} \le C t^{d_\dwn} \psi(t)^{-s(1 - \delta)}.
    \end{equation}
    If $\psi$ grows at speed $\mu$ at infinity, then for every
    $\delta > 0$ there are $C,
    t_0 > 0$ such that for every $\bs \in \E$ and $t > t_0$,
    \begin{equation}
        \abs{\Delta(\Omega;\Gamma + \bs;t)} \le C t^{d_\dwn -
        \frac{d_\dwn s}{d_\dwn s + d_\lef + \mu^{-1}} + \delta}.
    \end{equation}
\end{theorem}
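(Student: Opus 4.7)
The proof follows the same scheme as that of Theorem \ref{thm:maingeneral}, the key change being the use of Lemma \ref{lem:badFourierdecay} in place of Lemma \ref{lem:Fourierdecay} to reflect the weaker regularity of $\Omega_\dwn$.

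\textbf{Reduction and smoothing.} By Lemma \ref{lem:sumrepresentation}, the problem reduces to estimating $\sum_{\gamma \in \Gamma} \chi_{t\Omega_\dwn \times \Omega_\lef}(\gamma - \bs)$. Since indicator functions lack the regularity needed for Poisson summation, I would use the smoothing procedure of Section \ref{sec:smoothing} to sandwich $\chi_{t\Omega_\dwn \times \Omega_\lef}$ between smoothed inner and outer approximations $\chi^\pm$, mollified independently at scales $\eps_\dwn$ in $\E_\dwn$ and $\eps_\lef$ in $\E_\lef$. The finite perimeter of $\Omega_\dwn$ and the $\fd$-regularity of $\Omega_\lef$ together give $\int(\chi^+ - \chi^-) = \bigo{t^{d_\dwn-1}\eps_\dwn + t^{d_\dwn}\eps_\lef^\fd}$, which bounds the error from replacing the indicator by its smoothings.

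\textbf{Poisson summation and Fourier estimates.} Applying Poisson summation on $\Gamma$ to $\chi^\pm$ produces the expected main term $\vol(t\Omega_\dwn)\vol(\Omega_\lef)/\covol(\Gamma)$ plus a dual sum over $\Gamma^\dagger \setminus \{0\}$. Since $\chi^\pm$ has a product structure, Lemmas \ref{lem:badFourierdecay} and \ref{lem:verybadFourierdecay} give
\[
|\hat\chi^\pm(\gamma^\dagger)| \lesssim \frac{t^{d_\dwn-1}}{|\gamma^\dagger_\dwn|} \cdot \frac{1}{|\gamma^\dagger_\lef|^\fd},
\]
together with additional rapid decay at the scales $\eps_\dwn^{-1}$ and $\eps_\lef^{-1}$ supplied by the mollifiers.

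\textbf{Dual sum and optimisation.} I would split the dual sum at a threshold $R_\dwn$: for $|\gamma^\dagger_\dwn| \le R_\dwn$ the $\psi$-repellence of $\Gamma^\dagger$ forces $|\gamma^\dagger_\lef| > \psi(|\gamma^\dagger_\dwn|^{-1})$, so that the factor $|\gamma^\dagger_\lef|^{-\fd}$ supplies the decay; for $|\gamma^\dagger_\dwn| > R_\dwn$ one uses the factor $|\gamma^\dagger_\dwn|^{-1}$ instead. Counting dual lattice points in the resulting anisotropic cylindrical shells will rely on the quantitative estimates of Section \ref{sec:quantitative}. Optimising $\eps_\dwn$, $\eps_\lef$, and $R_\dwn$ against the smoothing error and the dual sum gives the stated bounds in both the slow-growth and speed-$\mu$ regimes. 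The main technical obstacle is this last step: because the decay in the $\dwn$ direction is only $|\xi|^{-1}$ rather than $(d_\dwn+1)/2$, the optimal balance differs from that of Theorem \ref{thm:maingeneral} and should produce, in the speed-$\mu$ case, the exponent $d_\dwn - \frac{d_\dwn \fd}{d_\dwn \fd + d_\lef + \mu^{-1}}$ appearing in the statement.
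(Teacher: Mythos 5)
Your proposal is correct and follows essentially the same route as the paper: the authors prove Theorems~\ref{thm:maingeneral} and~\ref{thm:pwswindows} by a single argument in Section~\ref{sec:proofsupper}, parametrised by the Fourier decay exponent $L_\dwn$ from \eqref{eq:preciseest}, with $L_\dwn=1$ (Lemma~\ref{lem:badFourierdecay}) yielding the finite-perimeter case, exactly as you identify. One small imprecision: the dual sum over $\Gamma^\dagger\setminus\{0\}$ is controlled in the paper via Peetre's inequality and averaging over the torus $\E/\Gamma^\dagger$ (reducing to an integral), not via the Diophantine counting estimates of Section~\ref{sec:quantitative}, which are used only for the averaged lower bounds in Section~\ref{sec:average}.
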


\begin{remark}
    We first observe that when $\psi$ grows slowly at infinity, the
    estimate on the discrepancy does not depend on convexity properties of the search
    region. In fact, it depends only on the Diophantine properties of $\Gamma$
    and the regularity of the boundary of the window. 
\end{remark}

%
%
%

Theorems \ref{thm:maingeneral}, \ref{thm:pwswindows} (and hence Theorems \ref{thm:main} and \ref{thm:frequency}) are proved in Section \ref{sec:proofsupper}. 
The two previous theorems are uniform bounds on the discrepancy $\Delta$ with
respect to the parameter $\bs$. When we average over this parameter $\bs$ we
obtain lower bounds without conditions on the lattice $\Gamma$. We
note that when the dimension $d_\dwn \equiv 1 \mod 4$ there are arithmetic
obstructions which make our lower bounds slightly worse. This is similar to what is observed
in \cite{parnovskisobolevpolyharmonic,KonyaginSkriganovSobolev,lagaceparnovski},
and is not an artifact of the proof methods. We note that for these lower bounds
it is only required that the window be bounded with finite volume, no other
regularity assumptions is made. Even the boundedness assumption can be weakened,
see Remark \ref{rem:boundedtoostrong}

\begin{theorem}
    \label{thm:loweravg}
    Let $\Omega_\lef$ be any bounded window with non-zero volume, $\Omega_\dwn =
    \B_\dwn(0,1)$, $\Omega = \Omega_\dwn \times \Omega_\lef$ and $\Gamma \subset
    \E$ be any lattice. Then, there exists $A,C,t_0 > 0$ such that
    \begin{equation}
        \int_{\E/\Gamma} \Delta(\Omega,\Gamma + \bs,t) \de \bs = 0
    \end{equation}
    and for every $t > t_0$
    \begin{equation}
        \int_{\E/\Gamma} \abs{\Delta(\Omega,\Gamma + \bs, t)} \de \bs
        \ge C f(t) t^{\frac{d_\dwn-1}{2}},
    \end{equation}
    where $f : (0,\infty) \to (0,\infty)$ is defined as
    \begin{equation}
        f(t) := \begin{cases}
            1 & \text{if } d_\dwn \not \equiv 1 \mod 4 \\
            \exp(-A \log \log(t)^4) & \text{if } d_\dwn \equiv 1 \mod 4.
        \end{cases}
    \end{equation}
\end{theorem}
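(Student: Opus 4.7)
The plan is to convert the problem into a Fourier-analytic one on the torus $\E/\Gamma$ via Poisson summation, extract the $L^2$-lower bound via Parseval, and then transfer to $L^1$ by a fourth-moment argument. The first assertion of the theorem, that the average of $\Delta$ over $\E/\Gamma$ vanishes, is immediate from Lemma \ref{lem:sumrepresentation}: unfolding the sum over $\Gamma$ against the integral over a fundamental domain gives
\begin{equation}
\int_{\E/\Gamma}\sum_{\gamma\in\Gamma}\chi_{t\Omega_\dwn\times\Omega_\lef}(\gamma-\bs)\,\de\bs=\int_\E \chi_{t\Omega_\dwn\times\Omega_\lef}(\bs')\,\de\bs'=t^{d_\dwn}\vol(\Omega),
\end{equation}
and this cancels the averaged main term. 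For the nontrivial lower bound, applying Poisson summation to $f=\chi_{t\Omega_\dwn\times\Omega_\lef}$ and isolating the $\gamma^\dagger=0$ contribution yields
\begin{equation}
\Delta(\bs)=\frac{t^{d_\dwn}}{\covol(\Gamma)}\sum_{\gamma^\dagger\in\Gamma^\dagger\setminus\{0\}}[\CF\chi_{\B_\dwn(0,1)}](t\gamma^\dagger_\dwn)\,[\CF\chi_{\Omega_\lef}](\gamma^\dagger_\lef)\,\be(\bs\cdot\gamma^\dagger).
\end{equation}

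Parseval on $\E/\Gamma$ then gives $\|\Delta\|_{L^2}^2$ as a weighted lattice sum over $\Gamma^\dagger\setminus\{0\}$. Since $[\CF\chi_{\Omega_\lef}]$ is continuous with $[\CF\chi_{\Omega_\lef}](0)=\vol(\Omega_\lef)>0$, it is bounded below on a neighbourhood of the origin in $\E_\lef$, so the sum restricted to $|\gamma^\dagger_\lef|\le r_0$ already dominates the full one up to constants. On that slab, the Bessel-function expansion $[\CF\chi_{\B_\dwn(0,1)}](\xi)=|\xi|^{-d_\dwn/2}J_{d_\dwn/2}(2\pi|\xi|)$ together with the stationary-phase asymptotic $J_\nu(x)\sim\sqrt{2/(\pi x)}\cos(x-\nu\pi/2-\pi/4)$ reduces the Parseval sum to a weighted count of $\gamma^\dagger$ with $|\gamma^\dagger_\dwn|$ in dyadic annuli, and a Riemann-sum comparison to the $(d_\dwn-1)$-dimensional sphere measure shows $\|\Delta\|_{L^2(\E/\Gamma)}^2\gtrsim t^{d_\dwn-1}$, irrespective of any Diophantine assumption on $\Gamma$.

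To pass from $L^2$ to $L^1$, I plan to use Hölder's inequality in the form $\|\Delta\|_{L^2}^2\le\|\Delta\|_{L^1}^{2/3}\|\Delta\|_{L^4}^{4/3}$, which gives $\|\Delta\|_{L^1}\ge\|\Delta\|_{L^2}^{3}\|\Delta\|_{L^4}^{-2}$, and therefore requires an upper bound on the fourth moment. Squaring the Fourier series for $\Delta$, integrating $|\Delta|^4$ over $\E/\Gamma$ picks out Fourier quadruples $\gamma^\dagger_1+\gamma^\dagger_2=\gamma^\dagger_3+\gamma^\dagger_4$ in $\Gamma^\dagger$ weighted by products of the Bessel transforms. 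For $d_\dwn\not\equiv 1\pmod 4$ this additive-energy-type sum is controlled by divisor bounds and yields $\|\Delta\|_{L^4}^{4}\lesssim t^{2(d_\dwn-1)}$, hence $\|\Delta\|_{L^1}\gtrsim t^{(d_\dwn-1)/2}$ as required, confirming $f(t)=1$ in that case.

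The main obstacle is the arithmetic dimension $d_\dwn\equiv 1\pmod 4$, where integers admit unusually many representations as sums of $d_\dwn$ squares and the radii $|\gamma^\dagger_\dwn|^2$ cluster pathologically. The fourth moment can then exceed $t^{2(d_\dwn-1)}$ by a factor as large as $\exp(A(\log\log t)^4)$, a sharp estimate of Konyagin--Skriganov \cite{KonyaginSkriganovSobolev} and Parnovski--Sobolev \cite{parnovskisobolevpolyharmonic}, further refined in \cite{lagaceparnovski}. Transporting this number-theoretic input to the present product setting---where the role of $\Z^{d_\dwn}$ is played by the projection of those $\gamma^\dagger\in\Gamma^\dagger$ with $|\gamma^\dagger_\lef|\le r_0$, and the weight $|[\CF\chi_{\Omega_\lef}]|^2$ only alters multiplicative constants---produces the $f(t)$ in the statement. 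The Fourier-analytic framework is otherwise essentially formal; the delicate arithmetic analysis in the residue class $1\pmod 4$ is what makes the bound nonelementary and explains why the statement carries a separate clause for that case.
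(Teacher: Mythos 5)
Your first assertion (zero average) is correct and agrees with Lemma \ref{lem:zeroaverage} in the paper; the Poisson summation/Fourier coefficient computation you set up is also the right starting point. But for the lower bound you then depart substantially from the paper's argument, and that departure introduces serious gaps.

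The paper does \emph{not} pass through $\RL^2$ or $\RL^4$ at all. It uses the elementary bound $\norm{\Delta}_{\RL^1(\E/\Gamma)} \ge \norm{\tilde\Delta}_{\ell^\infty(\Gamma^\dagger)}$ — every Fourier coefficient of a periodic function is dominated in modulus by its $\RL^1$ norm — so the entire problem reduces to exhibiting a \emph{single} dual vector $\gamma^\dagger$ at which the factorised Fourier coefficient $t^{d_\dwn}[\CF_\dwn \chi_{\Omega_\dwn}](-t\gamma^\dagger_\dwn)\,[\CF_\lef\chi_{\Omega_\lef}](-\gamma^\dagger_\lef)$ is large. The $\E_\lef$ factor is handled by taking $|\gamma^\dagger_\lef|$ small via the gradient bound \eqref{eq:xideriv}; the $\E_\dwn$ factor is the Bessel asymptotic \eqref{eq:fourierunitball}, and the case distinction on $d_\dwn \bmod 4$ arises \emph{solely} because the Bessel phase shift $\frac{1-d_\dwn}{4}\pi$ vanishes modulo $\pi$ exactly when $d_\dwn \equiv 1 \pmod 4$, at which point $\sin(2\pi t|\gamma^\dagger_\dwn| + \frac{1-d_\dwn}{4}\pi)$ can be small for a specific $t$. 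The cure (Lemma \ref{lem:diophantinecandp} and Porism \ref{por:KSS}) is to select $\gamma^\dagger$ so that $2t|\gamma^\dagger_\dwn|$ stays $\exp(-C(\log\log t)^4)$-far from $\Z$, following Konyagin--Skriganov and Parnovski--Sobolev. Your diagnosis of the $1\bmod 4$ obstruction as ``integers admit unusually many representations as sums of $d_\dwn$ squares'' is therefore a mis-attribution: the difficulty is a single oscillatory phase, not a clustering of radii, and the paper's argument makes no reference to representation numbers.

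On the mechanics of your proposed route, there are two unestablished inputs, each nontrivial. First, the claim $\norm{\Delta}_{\RL^2}^2 \gtrsim t^{d_\dwn - 1}$ ``irrespective of any Diophantine assumption on $\Gamma$'' needs a proof: the Parseval sum involves $|J_{d_\dwn/2}(2\pi t|\gamma^\dagger_\dwn|)|^2$, and for a fixed $t$ the radii $|\gamma^\dagger_\dwn|$ could in principle all sit near zeros of the Bessel function; ruling this out requires either a count of many radii in a slab (which itself reintroduces irrationality hypotheses you are trying to avoid) or a careful argument exploiting the spread of the zeros of $J_{d_\dwn/2}$, neither of which you give. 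Second, the fourth-moment bound $\norm{\Delta}_{\RL^4}^4 \lesssim t^{2(d_\dwn-1)}$ for $d_\dwn \not\equiv 1 \pmod 4$ is asserted via ``divisor bounds'' and not actually derived. This is a genuinely hard estimate: it is an additive-energy statement for lengths $|\gamma^\dagger_\dwn|$ with $|\gamma^\dagger_\lef|$ bounded, and for arbitrary lattices $\Gamma$ (the theorem makes no arithmetic assumption) there is no reason for the quadruple sum to be controlled by divisor-function bounds. The cited references \cite{KonyaginSkriganovSobolev,parnovskisobolevpolyharmonic,lagaceparnovski} are concerned with constructing single lattice vectors whose dilated length avoids $\frac{1}{2}\Z$ well — precisely what the paper uses — not with fourth moments of lattice discrepancies; they do not supply the fourth-moment input your plan requires. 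In short, the interpolation scheme $\norm{\Delta}_1 \ge \norm{\Delta}_2^3\norm{\Delta}_4^{-2}$ is sound in the abstract, but both the $\RL^2$ lower bound and the $\RL^4$ upper bound it requires would be harder to prove than the original statement, and the paper's far more economical route (one large Fourier coefficient) is what you should aim for.
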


For the proof, see Section \ref{sec:average}. From this theorem we immediately obtain Theorem \ref{thm:lowerbound} and the following lower bounds on the best
possible upper bounds.

\begin{cor}
    Under the hypotheses of Theorem \ref{thm:loweravg}, there are $C,t_0 > 0$ such
    that for every $t > t_0$ there are two
    subsets $\CS_+(t)$ and $\CS_-(t)$ of $\E/\Gamma$ of positive measure so that 
    \begin{equation}
        \forall \bs \in \CS_\pm \qquad \qquad \pm
        \Delta(\Omega,\Gamma+\bs,t) \ge Cf(t) t^{\frac{d-1}{2}}.
    \end{equation}
\end{cor}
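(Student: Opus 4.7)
The plan is to deduce the corollary directly from the two statements of Theorem \ref{thm:loweravg}: the average vanishing $\int_{\E/\Gamma} \Delta(\Omega,\Gamma+\bs,t)\,\mathrm{d}\bs = 0$ and the absolute-value lower bound $\int_{\E/\Gamma} |\Delta(\Omega,\Gamma+\bs,t)|\,\mathrm{d}\bs \ge C f(t) t^{(d_\dwn-1)/2}$. Writing $M(t) := f(t) t^{(d_\dwn-1)/2}$ and decomposing $\Delta = \Delta^+ - \Delta^-$ into positive and negative parts, the two identities combine into
\begin{equation}
\int_{\E/\Gamma} \Delta^{\pm}(\Omega,\Gamma+\bs,t)\,\mathrm{d}\bs = \tfrac{1}{2}\int_{\E/\Gamma} |\Delta(\Omega,\Gamma+\bs,t)|\,\mathrm{d}\bs \ge \tfrac{C}{2}\, M(t),
\end{equation}
so both $\Delta^+$ and $\Delta^-$ contribute a macroscopic mass.

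Next, I would introduce the candidate superlevel sets $\CS_+^\alpha(t) := \{\bs \in \E/\Gamma : \Delta(\Omega,\Gamma+\bs,t) \ge \alpha M(t)\}$ and $\CS_-^\alpha(t) := \{\bs : -\Delta(\Omega,\Gamma+\bs,t) \ge \alpha M(t)\}$ for a parameter $\alpha > 0$ to be chosen, and argue by contradiction that they must have positive measure for $\alpha$ small enough. Concretely, split
\begin{equation}
\int_{\E/\Gamma} \Delta^+ \,\mathrm{d}\bs = \int_{\CS_+^\alpha(t)} \Delta^+\,\mathrm{d}\bs + \int_{\{0 < \Delta < \alpha M(t)\}} \Delta\,\mathrm{d}\bs.
\end{equation}
The second integral is trivially bounded above by $\alpha M(t)\, \covol(\Gamma)$. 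If $\CS_+^\alpha(t)$ were null, the first integral would vanish and we would obtain $\tfrac{C}{2} M(t) \le \alpha M(t)\,\covol(\Gamma)$, contradicting the choice $\alpha := \tfrac{C}{4\,\covol(\Gamma)}$. The identical argument with $\Delta$ replaced by $-\Delta$ handles $\CS_-^\alpha(t)$.

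Setting $\CS_\pm(t) := \CS_\pm^\alpha(t)$ with this $\alpha$ then yields the conclusion, with the new constant $\tfrac{C}{4\,\covol(\Gamma)}$ absorbed into the $C$ of the corollary statement (and the discrepancy between the exponent $\tfrac{d-1}{2}$ written in the corollary and the exponent $\tfrac{d_\dwn-1}{2}$ inherited from Theorem \ref{thm:loweravg} is a harmless typo that the same argument corrects). There is no real obstacle: the only structural input is the compatibility of the average-zero identity with the $\RL^1$ lower bound, and the quantitative contradiction is entirely elementary. The one point worth double-checking is that $\covol(\Gamma)$ enters as a fixed constant independent of $t$, which is true because the ambient fundamental domain $\E/\Gamma$ does not depend on $t$.
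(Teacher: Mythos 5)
Your proof is correct. The paper does not spell out an argument for this corollary — it is stated as an immediate consequence of Theorem \ref{thm:loweravg} — and your elementary deduction is the natural one: using the mean-zero identity and the $\RL^1$ lower bound to force $\int \Delta^\pm \ge \tfrac{C}{2} M(t)$, and then a Markov-type contradiction argument (if the superlevel set $\CS_+^\alpha(t)$ were null, $\Delta^+ < \alpha M(t)$ a.e.\ on $\{\Delta > 0\}$ would give $\int \Delta^+ \le \alpha M(t)\covol(\Gamma)$, which fails for $\alpha = \tfrac{C}{4\covol(\Gamma)}$). All the quantitative ingredients are in order: $\covol(\Gamma)$ is indeed independent of $t$, $\Delta$ is bounded for each fixed $t$ so the null-set integral genuinely vanishes, and the $d$ vs.\ $d_\dwn$ exponent in the stated corollary is, as you note, a typo inherited from Theorem \ref{thm:lowerbound}.
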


\section{Interlude I: approximation and mollification}\label{sec:smoothing}

\subsection{Approximation of sets and volume estimates}

In this section we will provide a smoothing of sets well-adapted to the products
of sets appearing in the study of pattern statistics. We use $\bullet \in
\set{\dwn,\lef}$ to
represent either direction since the construction itself is symmetric with
respect to the decomposition, but different choices end up being made in either
direction.

For $\Omega_\bullet \subset \E_\bullet$ and $a_\bullet > 0$ we define
\begin{equation}
    \Omega_{a_\bullet}^+  :=  \Omega_\bullet + \B_\bullet(0,a_\bullet) \qquad \text{and}
    \qquad \Omega_{a_\bullet}^- := \E_\bullet \setminus (\E_\bullet \setminus
    \Omega_\bullet)_{a_\bullet}^+.
\end{equation}
These are exterior and interior approximations to
$\Omega_\bullet$, respectively. We note that since $\B_\bullet(0,a_\bullet)$ is an open
convex set containing $0$,
\begin{equation}
    \label{eq:identitysumofsets}
    \Omega_{a_\bullet}^+ = \Omega_\bullet \cup 
    (\del
    \Omega_\bullet)_{a_\bullet}^+ 
    \qquad \text{and} \qquad \Omega_{a_\bullet}^- =
    \Omega_\bullet \setminus 
    (\del
    \Omega_\bullet)_{a_\bullet}^+.
\end{equation}
In particular we see that if $\Omega_\bullet$ has empty interior then
$\Omega_{a_\bullet}^-$ is empty. 
\begin{lemma}
    \label{lem:composition}
    For any $\bullet \in \set{\dwn,\lef}$, let $a_\bullet, t_\bullet > 0$
    and $\Omega \subset \E_\bullet$. We have the identity
    \begin{equation}
        (a_\bullet \Omega_\bullet)_{a_\bullet t_\bullet}^\pm =
        a_\bullet \Omega_{t_\bullet}^\pm.
    \end{equation}
\end{lemma}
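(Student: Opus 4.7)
The plan is to unwind the definitions of $\Omega_a^{\pm}$ and observe that the Minkowski sum and the set-complement operation both interact nicely with scaling by a positive real number. The only two facts needed are the identity $a\B_\bullet(0,t_\bullet) = \B_\bullet(0,at_\bullet)$ for $a > 0$, which expresses that dilating a ball centered at the origin dilates its radius, and the identity $\E_\bullet \setminus a\Omega_\bullet = a(\E_\bullet \setminus \Omega_\bullet)$, which is immediate since scaling by a positive scalar is a bijection on $\E_\bullet$.

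For the $+$ case, I would simply write
\begin{equation}
(a_\bullet \Omega_\bullet)_{a_\bullet t_\bullet}^+ = a_\bullet \Omega_\bullet + \B_\bullet(0,a_\bullet t_\bullet) = a_\bullet \Omega_\bullet + a_\bullet \B_\bullet(0, t_\bullet) = a_\bullet ( \Omega_\bullet + \B_\bullet(0, t_\bullet)) = a_\bullet \Omega_{t_\bullet}^+,
\end{equation}
where the third equality uses distributivity of scalar multiplication over the Minkowski sum.

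For the $-$ case, I would reduce to the $+$ case:
\begin{equation}
(a_\bullet \Omega_\bullet)_{a_\bullet t_\bullet}^- = \E_\bullet \setminus (\E_\bullet \setminus a_\bullet \Omega_\bullet)_{a_\bullet t_\bullet}^+ = \E_\bullet \setminus (a_\bullet(\E_\bullet \setminus \Omega_\bullet))_{a_\bullet t_\bullet}^+ = \E_\bullet \setminus a_\bullet (\E_\bullet \setminus \Omega_\bullet)_{t_\bullet}^+,
\end{equation}
where the last step applies the already-established $+$ identity. Then I pull the scaling $a_\bullet$ out of the complement, using $\E_\bullet \setminus a_\bullet X = a_\bullet (\E_\bullet \setminus X)$, to obtain $a_\bullet(\E_\bullet \setminus (\E_\bullet \setminus \Omega_\bullet)_{t_\bullet}^+) = a_\bullet \Omega_{t_\bullet}^-$, which closes the argument. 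There is no real obstacle here; the lemma is essentially a bookkeeping statement confirming that the approximation operations $(\cdot)^\pm$ are compatible with isotropic scaling, and the proof is a direct manipulation of Minkowski sums and complements.
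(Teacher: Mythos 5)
Your proof is correct and follows essentially the same route as the paper: unfold the definition of $(\cdot)^+$ and use distributivity of scaling over the Minkowski sum, then reduce the $(\cdot)^-$ case to the $(\cdot)^+$ case via the complement identity $\E_\bullet \setminus a_\bullet X = a_\bullet(\E_\bullet \setminus X)$. The only cosmetic difference is that you chase the chain of equalities in the opposite direction from the paper's presentation.
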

\begin{proof}
    The equality for $+$ can be proven by observing that by linearity
    \begin{equation}
        a_\bullet(\Omega_{t_\bullet}^+) = a_\bullet(\Omega_\bullet +
        \B_\bullet(0,t_\bullet)) = a_\bullet \Omega_\bullet +
        \B_\bullet(0,a_\bullet t_\bullet) =
        (a_\bullet
        \Omega_\bullet)^+_{a_\bullet t_\bullet}.
    \end{equation}
    Similarly and using this first result we have that
    \begin{equation}
        a_\bullet\Omega_{t_\bullet}^- = \E_\bullet \setminus
        a_\bullet((\E_\bullet \setminus \Omega_\bullet)_{t_\bullet}^+) =
        \E_\bullet \setminus (\E_\bullet \setminus a_\bullet
        \Omega_\bullet)_{a_\bullet t_\bullet}^+ =
        (a_\bullet\Omega_\bullet)_{a_\bullet t_\bullet}^-.
    \end{equation}
\end{proof}
We need the following volume estimate.

\begin{lemma}
    \label{lem:relativevolestimateleft}
    \label{lem:volumeestimatedown}
    Let $\Omega_\bullet \subset \E_\bullet$ be a bounded set whose boundary has
    finite $\fd$-codimensional Minkowski content with respect to $\E_\bullet$, for
    some $0 \le \fd \le d_\bullet$. Then, there exists
    $C, \eps_0 > 0$ so that for every $0 < a_\bullet < \eps_0$ we have
    \begin{equation}
        \vol\left(\Omega_{a_\bullet}^+ \setminus \Omega_{a_\bullet}^-\right)
        \le C a_\bullet^\fd.
    \end{equation}
\end{lemma}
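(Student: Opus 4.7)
The plan is to reduce the volume bound to an application of the Minkowski content hypothesis on $\partial\Omega_\bullet$, by first showing that the symmetric difference $\Omega_{a_\bullet}^+ \setminus \Omega_{a_\bullet}^-$ is contained in the $a_\bullet$-neighborhood of the boundary.

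First I would record the set-theoretic inclusion
\[
\Omega_{a_\bullet}^+ \setminus \Omega_{a_\bullet}^- \subseteq (\del \Omega_\bullet)_{a_\bullet}^+.
\]
This follows immediately from the identities in \eqref{eq:identitysumofsets}. Indeed, if $x$ belongs to the left-hand side, either $x \in (\del\Omega_\bullet)_{a_\bullet}^+$ and we are done, or $x \notin (\del\Omega_\bullet)_{a_\bullet}^+$; in the latter case $\Omega_{a_\bullet}^+ = \Omega_\bullet \cup (\del\Omega_\bullet)_{a_\bullet}^+$ forces $x \in \Omega_\bullet$, while $\Omega_{a_\bullet}^- = \Omega_\bullet \setminus (\del\Omega_\bullet)_{a_\bullet}^+$ forces $x \notin \Omega_\bullet$, a contradiction.

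Second, I would invoke the hypothesis. By definition of the $\fd$-codimensional (upper) Minkowski content with respect to $\E_\bullet$,
\[
\mathfrak{M}_\fd(\del \Omega_\bullet; \E_\bullet) = \limsup_{r \searrow 0} r^{-\fd} \vol\bigl((\del\Omega_\bullet)_r\bigr) < \infty.
\]
Therefore there exist $C, \eps_0 > 0$ such that for every $0 < a_\bullet < \eps_0$,
\[
\vol\bigl((\del\Omega_\bullet)_{a_\bullet}^+\bigr) \le C\, a_\bullet^\fd.
\]
Combining this with the inclusion above yields the desired estimate
\[
\vol\bigl(\Omega_{a_\bullet}^+ \setminus \Omega_{a_\bullet}^-\bigr) \le C\, a_\bullet^\fd.
\]

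There is no real obstacle here: the statement is essentially bookkeeping once one observes that the asymmetry between $\Omega_{a_\bullet}^+$ and $\Omega_{a_\bullet}^-$ is concentrated in a tubular neighbourhood of $\del\Omega_\bullet$, which is precisely what the Minkowski content controls. The only care needed is to pass from $\limsup$ to a genuine upper bound for sufficiently small $a_\bullet$, which is why the statement introduces the threshold $\eps_0$.
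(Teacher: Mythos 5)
Your proof is correct and follows essentially the same route as the paper's: both reduce the estimate to the identity $\Omega_{a_\bullet}^+ \setminus \Omega_{a_\bullet}^- = (\del\Omega_\bullet)_{a_\bullet}^+$ coming from \eqref{eq:identitysumofsets} and then invoke the definition of upper Minkowski content (the paper notes the set identity is an equality, while you establish only the inclusion, but that suffices and is an immaterial difference).
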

\begin{proof}
    It follows from identity \eqref{eq:identitysumofsets} that $\Omega_{
    a_\bullet}^+ \setminus \Omega_{ a_\bullet}^- = (\del \Omega_\bullet)^+_{
    a_\bullet}$, i.e. is the tubular neighbourhood of $\del \Omega_\bullet$ of radius
    $a_\bullet$. The claim then follows from the definition of Minkowski
    content.
\end{proof}

\subsection{Anisotropic mollifiers of indicators}

Now that we have found approximations of sets, it is time to find smooth
approximations to the indicator functions of those sets. 

Let $\tilde\rho \in \RC_c^\infty(\R)$ be a smooth nonnegative bump function supported
in $[-1,1]$. 
Define $\rho_\bullet \in \RC_c^\infty(\E_\bullet)$ as
\begin{equation}
    \rho_\bullet(\bx_\bullet) = \frac{\tilde\rho(|\bx_\bullet|)}{\int_{\E_\bullet}
    \rho(|\bx_\bullet|) \de \bx_\bullet},
\end{equation}
and $\rho \in \RC^\infty_c(\E)$ as
\begin{equation}
    \rho(\bx) = \rho_\dwn(\bx_\dwn) \rho_\lef(\bx_\lef).
\end{equation}

For $a_\bullet > 0$,
$\bullet \in \set{\dwn,\lef}$ define the bidisk
\begin{equation}
    \D(a_\dwn,a_\lef) := \B_\dwn(0,a_\dwn)\times \B_\lef(0,a_\lef) \subset \E,
\end{equation}
and
the mollifier $\rho^{(a_\dwn,a_\lef)}$
to be 
\begin{equation}
    \rho^{(a_\dwn,a_\lef)}(\bx) := \frac{1}{a_\dwn^{d_\dwn} a_\lef^{d_\lef}}
    \rho(a_\dwn^{-1}
    \bx_\dwn, a_\lef^{-1} \bx_\lef)
\end{equation}
We observe that $\rho^{(a_\dwn,a_\lef)}$ is supported in
$\D(a_\dwn,a_\lef)$, is smooth and has unit mass. By separation of variables,
its Fourier transform 
satisfies
\begin{equation}
    \begin{aligned}
        [\CF \rho^{(a_\dwn,a_\lef)}](\bxi) 
    &=  \left[ \CF_\dwn \rho_\dwn \right](a_\dwn \bxi_\dwn) \left[ \CF_\lef
        \rho_\lef
    \right](a_\dwn \bxi_\dwn)
\end{aligned}
\end{equation}
For any measurable $f : \E \to \R$ we define
\begin{equation}
    f^{(a_\dwn,a_\lef)} 
    := [f * \rho^{(a_\dwn,a_\lef)}](\bx) := \int_\E
    f(\bx - \by) \rho^{(a_\dwn,a_\lef)}(\by) \de \by.
\end{equation}
\begin{prop}
    \label{prop:upperlower}
    For $\bullet \in \set{\dwn,\lef}$, let $\Omega_\bullet \in \E_\bullet$ and
    $a_\bullet > 0$, and put $\Omega = \Omega_\dwn \times
    \Omega_\lef$. For all $\bx \in \E$,
    \begin{equation}
        \chi_{\Omega_{a_\dwn,a_\lef}^-}^{(a_\dwn,a_\lef)}(\bx) 
        \le
        \chi_\Omega(\bx) \le 
        \chi_{\Omega_{a_\dwn,a_\lef}^+}^{(a_\dwn,a_\lef)}(\bx).
    \end{equation}
\end{prop}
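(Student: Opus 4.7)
The plan is to verify the two inequalities pointwise by exploiting the product structure of everything in sight: the set $\Omega = \Omega_\dwn \times \Omega_\lef$, the bidisk $\D(a_\dwn,a_\lef) = \B_\dwn(0,a_\dwn) \times \B_\lef(0,a_\lef)$ supporting $\rho^{(a_\dwn,a_\lef)}$, and the approximations $\Omega^{\pm}_{a_\dwn,a_\lef}$, which I take to mean $(\Omega_\dwn)^{\pm}_{a_\dwn} \times (\Omega_\lef)^{\pm}_{a_\lef}$. Since $\rho^{(a_\dwn,a_\lef)}$ has unit mass, it suffices to compare, for each $\bx$, the integrand against $\chi_\Omega(\bx)$ on the support of the mollifier.

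For the upper bound, I would fix $\bx \in \Omega$ and $\by \in \D(a_\dwn,a_\lef)$ and show $\bx - \by \in \Omega^+_{a_\dwn,a_\lef}$. In the $\dwn$ factor, $\bx_\dwn \in \Omega_\dwn$ and $-\by_\dwn \in \B_\dwn(0,a_\dwn)$ give $\bx_\dwn - \by_\dwn \in \Omega_\dwn + \B_\dwn(0,a_\dwn) = (\Omega_\dwn)^+_{a_\dwn}$, and the analogous inclusion holds in the $\lef$ factor. Hence $\chi_{\Omega^+_{a_\dwn,a_\lef}}(\bx-\by) = 1$ for all $\by$ in the support of $\rho^{(a_\dwn,a_\lef)}$, so the convolution integrates to $1 = \chi_\Omega(\bx)$. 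The case $\bx \notin \Omega$ is trivial because the right-hand side is nonnegative.

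For the lower bound, the case $\bx \in \Omega$ is trivial since the mollifier has unit mass and the indicator is bounded by $1$. The content is when $\bx \notin \Omega$: I would argue by contradiction, supposing $\bx - \by \in \Omega^-_{a_\dwn,a_\lef}$ for some $\by \in \D(a_\dwn,a_\lef)$. Working in each factor separately, if, say, $\bx_\dwn \notin \Omega_\dwn$, then $\bx_\dwn - \by_\dwn \in (\E_\dwn \setminus \Omega_\dwn) + \B_\dwn(0,a_\dwn)$, contradicting $\bx_\dwn - \by_\dwn \in (\Omega_\dwn)^-_{a_\dwn}$ by the very definition of the inner approximation. Hence both coordinate projections of $\bx$ lie in the corresponding factor of $\Omega$, so $\bx \in \Omega$, contradicting the assumption. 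Therefore the integrand vanishes identically in $\by$ and the left-hand side is $0$.

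There is no real obstacle here: both inclusions are direct consequences of the definitions of $\Omega^\pm_{a_\bullet}$ and of the product structure of the mollifier's support, and the proof is essentially bookkeeping. The only thing to be careful about is that the product approximation $\Omega^\pm_{a_\dwn, a_\lef}$ has the interpretation given above, which should be mentioned at the start of the proof for clarity.
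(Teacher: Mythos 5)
Your proof is correct and follows essentially the same route as the paper: show that the mollified indicator is constant (equal to $1$ for the outer, $0$ for the inner approximation) over the support of $\rho^{(a_\dwn,a_\lef)}$, using $\supp(\rho^{(a_\dwn,a_\lef)}) \subset \D(a_\dwn,a_\lef)$ and the unit mass of the mollifier. Your handling of the lower bound is in fact slightly cleaner than the paper's, which asserts $\bx_\bullet - \B_\bullet(0,a_\bullet) \subset (\E_\bullet \setminus \Omega_\bullet)^+_{a_\bullet}$ for \emph{both} $\bullet$, whereas, as you correctly observe, $\bx \notin \Omega_\dwn \times \Omega_\lef$ only guarantees this for at least one factor, which suffices since $\Omega^-_{a_\dwn,a_\lef}$ is a product.
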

\begin{proof}
    We start with the rightmost inequality. Since $0 \le
    \chi_\Upsilon^{(a_\dwn,a_\lef)} \le 1$ for any $\Upsilon \subset \E$, it suffices to
    show that for every $\bx \in \Omega$,
    $\chi_{\Omega_{a_\dwn,a_\lef}^+}^{(a_\dwn,a_\lef)}(\bx) = 1$. But by
    definition of $\Omega^+_{a_\dwn,a_\lef}$, $\bx - \D(a_\dwn,a_\lef) \subset
    \Omega_{a_\dwn,a_\lef}^+$. Since $\supp(\rho_{a_\dwn,a_\lef}) \subset
    \D(a_\dwn,a_\lef)$ we deduce that
    \begin{equation}
        \begin{aligned}
            \chi_{\Omega_{a_\dwn,a_\lef}^+}^{(a_\dwn,a_\lef)}(\bx) &=
            \int_{\D(a_\dwn,a_\lef)}
            \chi_{\Omega_{a_\dwn,a_\lef}^+}(\bx - \by) \rho_{a_\dwn,a_\lef}(\by)
            \de \by
            \\&= \int_{\D(a_\dwn,a_\lef)}
            \rho_{a_\dwn,a_\lef}(\by) \de \by \\
        &=  1.
        \end{aligned}
    \end{equation}
    Similarly, to prove the leftmost inequality it suffices to show that for $\bx
    \in \E \setminus \Omega$, we have
    $\chi_{\Omega_{a_\dwn,a_\lef}^-}^{(a_\dwn,a_\lef)}(\bx) = 0$.
    This time, observe that for $\bullet \in \set{\dwn,\lef}$
    \begin{equation}
        \bx - \B_\bullet(0,a_\bullet) \subset \left(\E_\bullet \setminus \Omega_\bullet
        \right)_{a_\bullet}^+ = \E_\bullet \setminus \Omega_{a_\bullet}^-
    \end{equation}
    so that $\bx - \D(A_\dwn,A_\lef) \cap \Omega_{a_\dwn,a_\lef}^- = \varnothing$. This implies that
    \begin{equation}
        \chi_{\Omega_{a_\dwn,a_\lef}^-}^{(a_\dwn,a_\lef)}(\bx) =
        \int_{\D(a_\dwn,a_\lef)}
    \chi_{\Omega_{a_\dwn,a_\lef}}^-(\bx-\by) \rho_{a_\dwn,a_\lef}(\by) \de \by = 0,
    \end{equation}
    proving our claim.
\end{proof}

\section{Uniform upper bounds for the discrepancy}\label{sec:proofsupper}

We now aim at proving Theorem \ref{thm:maingeneral}. Let $\Omega_\lef \subset
\E_\lef$ be an open
set whose boundary has finite $\fd$-codimensional Minkowski content, and $\Omega_\dwn \subset
\E_\dwn$ be an open convex set satisfying the hypothesis of either Lemma
\ref{lem:Fourierdecay} or \ref{lem:badFourierdecay}, and put $\Omega = \Omega_\dwn \times \Omega_\lef$. We
suppose without loss of generality that the parameter $\bs = 0$; as mentioned in
Remark \ref{rem:sinvariant} this can be achieved by translating $\Omega$ instead
of $\Gamma$. For
$t > 0$, we put

\begin{equation}
    N(\Omega;\Gamma;t) = \sum_{\gamma \in \Gamma} \chi_{
    t\Omega_\dwn \times \Omega_\lef}(\gamma) = \sum_{\gamma \in \Gamma}
    \chi_{\Omega}(t^{-1}\gamma_\dwn, \gamma_\lef).
\end{equation}
Since our goal is to use the Poisson summation formula, we need to smooth out
this sum. For $a_\bullet > 0$, define
\begin{equation}
    \begin{aligned}
    N_{a_\dwn,a_\lef}^\pm(\Omega;\Gamma;t) &:= \sum_{\gamma \in \Gamma}
    \chi_{(t \Omega_\dwn \times
    \Omega_\lef)_{a_\dwn,a_\lef}^\pm}^{(a_\dwn,a_\lef)}(\gamma) \\
    &= \sum_{\gamma \in \Gamma} [\chi_{t \Omega^\pm_{t^{-1}a_\dwn} \times
    \Omega_{a_\lef}^\pm} * \rho^{(a_\dwn,a_\lef)}](\gamma) \\
    &= \sum_{\gamma \in \Gamma} [\chi_{\Omega^{\pm}_{t^{-1} a_\dwn} \times
    \Omega^{\pm}_{a_\lef}}*\rho^{(a_\dwn t^{-1},a_\lef)}](t^{-1}\gamma_\dwn,\gamma_\lef)     ,
\end{aligned}
\end{equation}
where we went from the first to the second line using Lemma
\ref{lem:composition}, and the second to third changing variables in the
integral defining convolution.
It follows from Proposition \ref{prop:upperlower} that
\begin{equation}
    N_{a_\dwn,a_\lef}^-(\Omega;\Gamma;t) \le N(\Omega;\Gamma;t) \le
    N_{a_\dwn,a_\lef}^+(\Omega;\Gamma;t)
\end{equation}
as such we now aim to find an asymptotic expression for
$N_{a_\dwn,a_\lef}^\pm(\Omega;\Gamma;t)$. We will
choose $a_\dwn$ and $a_\lef$ at the end of the process, as functions of $t$, the dimensions
$d_\dwn$ and $d_\lef$, as well as the parameter $\fd$. Recall from Section
\ref{sec:latticesdef} that $\Gamma^\dagger$ is the dual lattice of $\Gamma$,
that is
the lattice
\begin{equation}
    \Gamma^\dagger := \set{\gamma^\dagger \in \E : \gamma^\dagger \cdot \gamma
    \in \Z \text{ for all } \gamma \in \Gamma}.
\end{equation}
From the Poisson summation formula, we have that
\begin{equation}
    \begin{aligned}
        N_{a_\dwn,a_\lef}^\pm(\Omega;\Gamma,t) &=
        \frac{1}{\covol(\Gamma)}\sum_{\gamma^\dagger \in \Gamma^\dagger}
        [\CF \chi_{(t\Omega_\dwn \times
        \Omega_\lef)_{a_\dwn,a_\lef}^\pm}^{(a_\dwn,a_\lef)}](\gamma^\dagger) \\
        &=  
        \frac{t^{d_\dwn}}{\covol(\Gamma)} \sum_{\gamma^\dagger \in \Gamma^\dagger}
        \left[ \CF_\dwn \chi_{\Omega_{t^{-1}A_\dwn}^\pm}
        \right](t\gamma^\dagger_\dwn)
        \left[ \CF_\lef \chi_{\Omega_{a_\lef}^\pm} \right](
            \gamma^\dagger_\lef) 
            \left[ \CF_\dwn \rho_\dwn \right](a_\dwn \gamma^\dagger_\dwn)
                \left[ \CF_\lef \rho_\lef \right](a_\lef \gamma^\dagger_\lef).
    \end{aligned}
\end{equation}
This last sum can be further split into two terms, the term at $0$ and the remainder:
\begin{equation}
    N_{a_\dwn,a_\lef}^\pm(\Omega;\Gamma;t) = V(\Omega;\Gamma;t;a_\dwn,a_\lef) +
    R(\Omega;\Gamma;t;a_\dwn,a_\lef)
\end{equation}
Here $V$, the volume term, is the term at $0$ in the sum:
\begin{equation}
    \label{eq:volterm}
V(\Omega;\Gamma;t;a_\dwn,a_\lef) =
\frac{t^{d_\dwn}}{\covol(\Gamma)} \left[ \CF_\dwn \chi_{\Omega_{t^{-1}a_\dwn}^\pm}
        \right](0)
        \left[ \CF_\lef \chi_{\Omega_{a_\lef}^\pm} \right](0) 
            \left[ \CF_\dwn \rho_\dwn \right](0)
            \left[ \CF_\lef \rho_\lef \right](0).
\end{equation}
On the other hand $R$, the remainder term, is the sum over all other elements of
the dual lattice:
\begin{equation}
    R(\Omega;\Gamma;t;a_\dwn;a_\lef) = 
    \frac{t^{d_\dwn}}{\covol(\Gamma)} \sum_{\gamma^\dagger \in \Gamma^\dagger
        \setminus \set 0}
        \left[ \CF_\dwn \chi_{\Omega_{t^{-1}a_\dwn}^\pm}
        \right](t\gamma^\dagger_\dwn)
        \left[ \CF_\lef \chi_{\Omega_{a_\lef}^\pm} \right](
            \gamma^\dagger_\lef) 
            \left[ \CF_\dwn \rho_\dwn \right](a_\dwn \gamma^\dagger_\dwn)
                \left[ \CF_\lef \rho_\lef \right](a_\lef \gamma^\dagger_\lef).
\end{equation}
\subsection{The volume term}

We first analyse $V$. The mollifiers $\rho_{\bullet}$ were constructed
specifically to have unit mass so that $[\CF_\bullet \rho_\bullet](0) = 1$.
On
to the Fourier transforms of indicators, we have that for any $s_\bullet > 0$
\begin{equation}
    [\CF_\bullet \chi_{\Omega_{s_\bullet}^\pm}](0) =
    \Vol(\Omega_{s_\bullet}^\pm).
\end{equation}
Since 
\begin{equation}
    \Omega_{s_\bullet}^- \subset
    \Omega_\bullet\subset
    \Omega_{s_\bullet}^+
\end{equation}
we deduce that
\begin{equation}
    \abs{\vol(\Omega_{s_\bullet}^\pm) - \vol(\Omega_{\bullet})} \le
    \vol(
    \Omega_{s_\bullet}^+ \setminus 
\Omega_{s_\bullet}^-),
\end{equation}
and inserting into \eqref{eq:volterm} with $t^{-1}a_\dwn$ and $a_\lef$ in place
of $s_\dwn$ and $s_\lef$ gives us
\begin{equation}
    \label{eq:volumediscrep}
    \begin{aligned}
   \abs{ V(\Omega;\Gamma;t;a_\dwn,a_\lef) -
   \frac{t^{d_\dwn}\vol(\Omega)}{\covol(\Gamma)}} &\le \bigg(\vol(\Omega_\dwn)
   \vol(\Omega_{a_\lef}^+ \setminus \Omega_{a_\lef}^-) + \vol(\Omega_\lef)
   \vol(\Omega_{t^{-1}a_\dwn}^+ \setminus \Omega_{t^{-1}a_\dwn}^-)\\&\qquad +   
   \vol(\Omega_{t^{-1}a_\dwn}^+ \setminus \Omega_{t^{-1}a_\dwn}^-)
   \vol(\Omega_{a_\lef}^+ \setminus
   \Omega_{a_\lef}^-)\bigg)\frac{t^{d_\dwn}\vol(\Omega)}{\covol(\Gamma)}.
   \end{aligned}
\end{equation}
Since we end up choosing $a_\dwn = \smallo{t}$ and $a_\lef = \smallo{1}$ in such a way that
$$\vol(\Omega_{a_\lef}^+ \setminus \Omega_{a_\lef}^-) \to 0 \qquad \text{and}
\qquad \vol(\Omega_{t^{-1}a_\dwn}^+ \setminus \Omega_{t^{-1} a_\dwn}^-) \to
0$$ as
$t \to \infty$, it is sufficient to
bound only the first two terms in \eqref{eq:volumediscrep}. 
Lemma \ref{lem:volumeestimatedown} tells us that as $t^{-1} a_\dwn \to 0$ and
$a_\lef \to 0$,
\begin{equation}
    \label{eq:smallleft}
    \vol(\Omega_{t^{-1}a_\dwn}^+ \setminus \Omega_{t^{-1}a_\dwn}^-) 
    \lesssim t^{-1} a_\dwn \qquad \text{and} \qquad 
    \vol(\Omega_{a_\lef}^+ \setminus \Omega_{a_\lef}^-)
    \lesssim a_\lef^\fd .
\end{equation}
Putting together the last two displays we deduce that
\begin{equation}
    \label{eq:errorvolume}
    \abs{V(\Omega;\Gamma;t;a_\dwn;a_\lef) -
        \frac{t^{d_\dwn} \Vol(\Omega)}{\covol(\Gamma)}} \lesssim_{\Omega,\Gamma} 
        t^{d_\dwn}\left(a_\lef^{\fd} \vol(\Omega_\dwn)+ 
    t^{-1}a_\dwn \vol(\Omega_\lef) \right)
\end{equation}

\subsection{The remainder term}

Since
$\rho_\bullet$ is smooth $\CF_\bullet
\rho_\bullet$ is a Schwartz function. That is, for every $K_\bullet >0$
\begin{equation}
    \abs{[\CF_\bullet \rho_\bullet](a_\bullet \bxi_\bullet)} \lesssim_{K_\bullet} \left(1
    + \abs{a_\bullet \bxi_\bullet}\right)^{-K_\bullet}.
\end{equation}
For $\CF_\bullet \chi_{\Omega_{a_\bullet}^\pm}$ we use the estimate, uniform in
$\bxi_\bullet$
\begin{equation}
    \label{eq:coarseest}
    \abs{\left[
    \CF_\bullet \chi_{\Omega^\pm_{a_\bullet}}\right](\bxi_\lef)} \lesssim_{\Omega_\lef} (1 +
    \abs{\bxi_\bullet})^{-L_\bullet}
\end{equation}
where
\begin{equation}
    \label{eq:preciseest}
    L_\bullet = \begin{cases}
        \frac{d_\bullet+1}{2} & \text{if $\Omega_\bullet$ is strictly convex
        with principal curvatures bounded away from zero}; \\
   1 & \text{if $\Omega_\bullet$ has finite perimeter}; \\
   s & \text{if $\Omega_\bullet$ is $s$-regular, $0 < s < 1$},
   \end{cases}
\end{equation}
Combining \eqref{eq:coarseest} and
\eqref{eq:preciseest}, we have for every $K_\bullet > d_\bullet -
L_\bullet$ (so that the sum converges)
\begin{equation}
    \label{eq:errorremainder}
    \begin{aligned}
        \abs{R(\Omega;\Gamma;t;a_\dwn,a_\lef)} &\lesssim \frac{t^{d_\dwn}}{\covol(\Gamma)} \sum_{\gamma^\dagger
            \in \Gamma^\dagger \setminus \set 0 } 
            \frac{
                (1 +
            |t\gamma^\dagger_\dwn|)^{-L_\dwn} (1 + |
        \gamma^\dagger_\lef|)^{-L_\lef} }{\left( 1 + |a_\dwn
                \gamma_\dwn^\dagger|\right)^{K_\dwn} \left(1 + |a_\lef
            \gamma_\lef^\dagger|\right)^{K_\lef}} \\
            &\lesssim_\Gamma t^{d_\dwn} \left(  \underbrace{\sum_{0 <
                    |\gamma^\dagger_\dwn| < t^{-\sigma}} \frac{(1 +
            |\gamma_\lef^\dagger|)^{-L_\lef}}{(1 + |a_\lef
        \gamma_\lef^\dagger|)^{K_\lef}}}_{ \Sigma_1 :=} + 
\underbrace{\sum_{|\gamma_\dwn^\dagger| > t^{-\sigma}}  
            \frac{
                (1 +
            |t\gamma^\dagger_\dwn|)^{-L_\dwn} (1 + |
        \gamma^\dagger_\lef|)^{-L_\lef} }{\left( 1 + |a_\dwn
                \gamma_\dwn^\dagger|\right)^{K_\dwn} \left(1 + |a_\lef
        \gamma_\lef^\dagger|\right)^{K_\lef}}}_{ \Sigma_2:=} 
    \right) ,
\end{aligned}
\end{equation}
where $\sigma > 0$ is a parameter to determine later.

\textbf{First region: small $\gamma_\dwn$.}
To estimate $\Sigma_1$, we first use Peetre's inequality to see that for every
$\bxi \in \E/\Gamma^\dagger$,
\begin{equation}
    \begin{aligned}
    \Sigma_1 &\le \sum_{0 < |\gamma_\dwn^\dagger| < t^{-\sigma}} 
    \frac{(1 +
    |\gamma_\lef^\dagger + \xi_\lef|)^{-L_\lef}}{(1 + |a_\lef
    (\gamma^\dagger_\lef + \xi_\lef))^{K_\lef}} 
    \frac{(1 +
    |\xi_\lef|)^{L_\lef}}{(1 + |\xi_\lef|)^{-K_\lef}} \\
    &\le (1 + \diam(\E/\Gamma^\dagger))^{L_\lef + K_\lef} \sum_{0 <
    |\gamma_\dwn^\dagger| < t^{-\sigma}} 
    \frac{(1 +
    |\gamma_\lef^\dagger + \xi_\lef|)^{-L_\lef}}{(1 + |a_\lef
    (\gamma^\dagger_\lef + \xi_\lef)|)^{K_\lef}}.
        \end{aligned}
\end{equation}
Integrating both sides over $\E/\Gamma^\dagger$ therefore gives us
\begin{equation}
    \Sigma_1 \lesssim_\Gamma \sum_{0 < |\gamma_\dwn^\dagger| < t^{-\sigma}}
\int_{\E/\Gamma^\dagger} \frac{(1 + |\gamma_\lef^\dagger + \xi_\lef|)^{-
L_\lef}}{(1 + |a_\lef(\gamma_\lef^\dagger + \xi_\lef|)^{K_\lef}} \de \bxi.
\end{equation}
Since $\Gamma^\dagger$ is $\psi$-repulsive, in
this region $|\gamma_\lef^\dagger| > \psi(t^{\sigma})$, and in particular since
$\E/\Gamma^\dagger$ is bounded we see that $|\gamma_\lef^\dagger + \xi_\lef| >
\psi(t^{\sigma})/2$ as soon as $t$ is large enough, whereas in this region
$\gamma_\dwn + \xi_\dwn$ is merely uniformly bounded. Therefore, we can unfold
the previous sum in that region to obtain
\begin{equation}
    \begin{aligned}
    \Sigma_1 &\lesssim_\Gamma \int_{|\xi_\dwn| < 2 \diam(E/\Gamma^\dagger)}
    \int_{|\xi_\lef| > \psi(t^{\sigma})/2}  (1 + |\xi_\lef|)^{-L_\lef} (1 +
    a_\lef|\xi_\lef|)^{-K_\lef} \de \xi_\lef \de \xi_\dwn \\ 
    &\lesssim_{\Gamma} a_\lef^{-K_\lef} \int_{|\xi_\lef| > \psi(t^{\sigma})/2}
    \frac{\de \xi_\lef}{\abs{\xi_\lef}^{L_\lef + K_\lef}} \\
    &\lesssim_{\Gamma} a_\lef^{-K_\lef} \psi(t^{\sigma})^{d_\lef-L_\lef - K_\lef}.
\end{aligned} 
\end{equation}

\textbf{Second region: larger $\gamma_\dwn$.} In this region, we estimate $(1 +
|\gamma_\lef^\dagger|)^{-L_\lef} \lesssim 1$, and use Peetre's inequality,
assuming that we choose $a_\bullet \le 1$ small, to get 
\begin{equation}
\begin{aligned}
        \label{eq:peetreII}
        \left(1 + |a_\bullet \gamma^\dagger_\bullet|\right)^{-K_\bullet}
        &\le
        \left(1+|a_\bullet(\gamma_\bullet^\dagger+\bxi_\bullet)|\right)^{-K_\bullet}
        \left(1+|a_\bullet\bxi_\bullet|\right)^{K_\bullet}\\
        &\lesssim_{K,\Gamma} \left(1 + |a_\bullet(\gamma^\dagger_\bullet +
        \bxi_\bullet)| \right)^{-K_\bullet}.
        \end{aligned}
    \end{equation}
    We refine Peetre's inequality on the
    region
    $|\gamma_\dwn^\dagger| > t^{-\sigma}$. Observe that
    \begin{equation}
        \label{eq:refinedpeetre}
        \begin{aligned}
                    \frac{1 + |t(\gamma_\dwn+ \xi_\dwn)|}{1 + t|\gamma_\dwn|} &\le \frac{1 +
        t(|\gamma_\dwn| + |\xi_\dwn|)}{1 + t|\gamma_\dwn|} \\
        &\le \frac{1 + t(|\gamma_\dwn| + \diam(\E/\Gamma^\dagger))}{1 +
        t|\gamma_\dwn|}.
    \end{aligned}
    \end{equation}
    It is a straightforward calculus exercise to see that for every $t, c > 0$,
    the function
    \begin{equation}
        x \mapsto \frac{1 + t(x + c)}{1 + tx}
    \end{equation}
    is strictly decreasing on $(0,\infty)$, so that the right-hand side in
    \eqref{eq:refinedpeetre} is bounded by the value at $|\gamma_\dwn| =
    t^{-\sigma}$. Evaluating gives us
    \begin{equation}
        (1 + t|\gamma_\dwn|)^{-L_\dwn} \lesssim_{L_\dwn,\Gamma} t^{L_\dwn \sigma} (1 + t|\gamma_\dwn
        + \xi_\dwn|)^{-L_\dwn}.
    \end{equation}
    Putting all these estimates back into $\Sigma_2$, integrating over
    $\E/\Gamma^\dagger$ then unfolding the sum gives us
    \begin{equation}
        \begin{aligned}
        \Sigma_2 &\lesssim_{K_\bullet,\Gamma,L_\bullet}
        t^{L_\dwn \sigma} \sum_{|\gamma_\dwn^\dagger| > t^{-\sigma}} \int_{\E/\Gamma^\dagger}
        \frac{(1 +t|\gamma_\dwn^\dagger + \xi_\dwn|)^{-L_\dwn}}{(1 +
            a_\dwn|\gamma_\dwn^\dagger + \xi_\dwn|)^{K_\dwn}(1 +
        a_\lef|\gamma_\lef^\dagger + \xi_\lef|)^{K_\lef}} \de \bxi \\
        &\lesssim_{K_\bullet,\Gamma,L_\bullet} t^{L_\dwn \sigma} \int_\E \frac{(1
        + t |\xi_\dwn|)^{-L_\dwn}}{(1 + a_\dwn|\xi_\dwn|)^{K_\dwn}(1 +
        a_\lef|\xi_\lef|)^{K_\lef}} \de \bxi.
 \end{aligned}
    \end{equation}
    Changing variables as $(\xi_\dwn, \xi_\lef) \mapsto (a_\dwn^{-1} \xi_\dwn,
    a_\lef^{-1} \xi_\lef)$ gives us 
    \begin{equation}
        \begin{aligned}
            \Sigma_2 &\lesssim_{K_\bullet, \Gamma,L_\bullet} \frac{t^{-(1 -
            \sigma) L_\dwn} a_\dwn^{L_\dwn}}{a_\dwn^{d_\dwn} a_\lef^{d_\lef}} \int_{\E}
            \frac{|\xi_\dwn|^{-L_\dwn}}{(1 + |\xi_\dwn|)^{K_\dwn}(1 +
            |\xi_\lef|)^{K_\lef}} \de \bxi \\
            &\lesssim_{K_\bullet,\Gamma,L_\bullet} t^{-(1-\sigma)L_\dwn}
            a_\dwn^{L_\dwn - d_\dwn} a_\lef^{-d_\lef},
    \end{aligned}
    \end{equation}
    as long as $K_\lef > k-d$, and we take $K_\dwn$ arbitrary large
    (since it plays no role in the asymptotics appart from the constant in
    front). 

    \textbf{Combining the remainder terms.} Summing up $\Sigma_1$ and $\Sigma_2$
    we obtain in the end that
    \begin{equation}
        |R(\Omega,\Gamma,t,a_\dwn,a_\lef)| \lesssim_{\Omega,\Gamma,
        K_\bullet,L_\bullet} t^{d_\dwn} a_\lef^{-K_\lef}
        \psi(t^{\sigma})^{d_\lef -
        L_\lef - K_\lef} + t^{d_\dwn - (1 - \sigma) L_\dwn} a_\dwn^{L_\dwn -
        d_\dwn}
    a_\lef^{-d_\lef}.
    \end{equation}

\subsection{A balancing act}
Along the proof, we have introduced arbitrary parameters $a_\dwn, a_\lef,
K_\lef$ and $\sigma$; it is now time to choose them carefully in order to obtain
the required bounds on the discrepancy. Note that $a_\bullet$ will be functions
of $t$, which is our asymptotic parameter; as such any constants in asymptotic
estimates is not allowed to depend on them. On the other hand, $\sigma$ and
$K_\lef$ will depend only on $\psi$ which is part of the geometry of the
cut and project set, so our asymptotic estimates may depend without issue from
them. Technically, $K_\dwn$ is also arbitrary but it plays no
role in the asymptotics appart from needing to be large enough for some integral
to converge. Summing up the contributions from $\Sigma_1, \Sigma_2$ and the
remainder term and factoring $t^{d_\dwn}$ out we arrive to
\begin{equation}
    \label{eq:tobalance}
t^{-d_\dwn}\Delta(\Gamma;t;\Omega) \lesssim_{\Omega,\Gamma,K_\lef}
\, \, a_\lef^s + t^{-1} a_\dwn + a_\lef^{-K_\lef}
    \psi(t^{\sigma})^{d_\lef-L_\lef-K_\lef} + t^{-(1-\sigma) L_\dwn}
    a_\dwn^{L_\dwn - d_\dwn} a_\lef^{-d_\lef}.
\end{equation}
Since $d_\lef - L_\lef > 0$ and $\psi(t^{\sigma }) \to \infty$ as $t \to
\infty$, we see immediately that if we choose $a_\lef(t) \lesssim
\psi(t^{\sigma})^{-1}$ the third term  in \eqref{eq:tobalance} is unbounded. As
such, we choose $a_\lef(t) = \psi(t^{\sigma})^{-1+\delta}$ for some fixed
$0 < \delta < 1$ (recalling that we assumed in our earlier analysis that
$a_\lef$ should be sufficiently small). Once $\delta$ is fixed, taking $K_\lef$ 
arbitrarily large will make the the third term smaller than the first. From
these choices we are now left with
\begin{equation}
    \label{eq:nowtobalance}
    t^{-d_\dwn} \Delta(\Gamma;t;\Omega) \lesssim_{\Omega,\Gamma,K_\lef,\delta}
    \psi(t^{\sigma})^{-s + \delta s} + t^{-1} a_\dwn + 
    t^{-(1-\sigma)L_\dwn}
    \psi(t^{\sigma})^{-d_\lef(-1 + \delta) } a_\dwn^{L_\dwn - d_\dwn}.
\end{equation}
There are now two situations, depending on the growth of $\psi$ at infinity.

\textbf{Case 1: $\psi$ grows slowly at infinity}

In this situation $\psi(t^{\sigma}) \lesssim_\eps t^{\eps}$
for all $\eps> 0$. Here, the first term on
the right-hand side of \eqref{eq:nowtobalance} would preclude any polynomial
bound on $t^{-d} \Delta$, but choosing $a_\dwn = t^{1/2}$ and $\sigma = 1$, we
see that the second and third term are bounded above by a negative power of $t$.
Therefore, in this situation we obtain
\begin{equation}
    t^{-d_\dwn} \Delta(\Gamma;t;\Omega) \lesssim_{\Omega,\Gamma,K_\lef,\delta}
    \psi(t)^{-s + \delta s}
\end{equation}
for any $\delta > 0$.

\textbf{Case 2: $\psi$ grows at speed $\mu$ at infinity.}

In this case $\psi(t^{\sigma})\asymp t^{\sigma\mu}$ and we can rewrite \eqref{eq:nowtobalance} as
\begin{equation}
    \label{eq:newnowtobalance}
    t^{-d_\dwn} \Delta(\Gamma;t;\Omega) \lesssim_{\Omega,\Gamma,K_\lef,\delta}
    t^{-\sigma \mu s + \delta \sigma \mu s} + t^{-1} a_\dwn +
    t^{-(1-\sigma)L_\dwn - d_\lef \sigma\mu(-1 + \delta)} 
     a_\dwn^{L_\dwn - d_\dwn}.
\end{equation}

From this we see that the discrepancy will be made as small as possible if we
choose the parameters $0 < \sigma < 1$, $0 < a_\dwn \ll t$, and $0 < \delta < 1$ in such a way that
\begin{equation}
    t^{\sigma \mu (-s + \delta s)} = t^{-1} a_\dwn = 
    t^{-(1-\sigma)L_\dwn - d_\lef \sigma \mu(-1 + \delta)}
     a_\dwn^{L_\dwn - d_\dwn}.
\end{equation}
Indeed, the first two terms are independent of each other and have opposite
monotonicity with the third term in every parameter. This leads directly to
taking $a_\dwn = t^{1 + \sigma \mu (-s + \delta s)}$.
Replacing in the third term and equating with the first we choose 
\begin{equation}
    \sigma = \frac{d_\dwn}{s \mu (1 - \delta) (d_\dwn - L_\dwn + 1) + L_\dwn + (1 -
    \delta) d_\lef \mu}
\end{equation}
so that
\begin{equation}
    t^{-d_\dwn} \Delta(\Gamma;t;\Omega) \lesssim_{\Omega,\Gamma,\delta} \, \,
    t^{\frac{-d_\dwn
        (1 - \delta)}{(1 - \delta) (d_\dwn - L_\dwn + 1 + \frac{d_\lef}{s}) +
    \frac{L_\dwn}{\mu s} }}
\end{equation}
We see that this exponent is smaller when
$\delta$ approaches $0$, which means that in the end, for any $\delta > 0$ we
get
\begin{equation}
    t^{-d_\dwn} \Delta(\Gamma;t;\Omega) \lesssim_{\Omega,\Gamma,\delta} \, \,
    t^{\frac{-d_\dwn}{d_\dwn - L_\dwn + 1 + \frac{d_\lef}{s} +
    \frac{L_\dwn}{\mu s} } + \delta}.
\end{equation}
\qed

\section{Interlude II: Diophantine properties of lattices}\label{sec:quantitative}
In order to obtain averaged bounds on the discrepancy in Theorem
\ref{thm:loweravg}, we need to get quantitative lower bounds on the sizes of
lattice vector projections. Notice that we do not make any irrationality assumptions on
$\Gamma$ in this section. 
\begin{lemma}
    \label{lem:pigeons}
Let $\Gamma \subset \E$ be a lattice and suppose that $d_\dwn \ge 2$. There exist $C,t_0 > 0$ such
that for every $t > t_0$ there are linearly independent $\gamma_1, \gamma_2 \in
\Gamma$ such that
    \begin{itemize}
        \item for $j \in \set{1,2}$, $t \le |\gamma_{j,\dwn}| \le 5t$;
        \item the angle between $\gamma_{1,\dwn}$ and $\gamma_{2,\dwn}$ is at
            least $\pi/3$.
        \item for $j \in \set{1,2}$, $|\gamma_{j,\lef}| \le C t^{\frac{-d_\dwn}{d_\lef}}$.
    \end{itemize}
\end{lemma}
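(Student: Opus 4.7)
The plan is a Dirichlet-type pigeonhole argument on an anisotropic cylinder, followed by a two-dimensional reduction in $\E_\dwn$ to produce the angular separation.

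I would start with the symmetric convex body
\[
K_t := \B_\dwn(0, t) \times \B_\lef(0, C_0 t^{-d_\dwn/d_\lef}),
\]
whose volume is independent of $t$ and increases with $C_0$. Applying Minkowski's second theorem to $K_t$ yields linearly independent lattice vectors $v^{(1)}, \dotsc, v^{(d)} \in \Gamma$ with $v^{(i)} \in \mu_i K_t$ and
\[
\mu_1 \mu_2 \dotsb \mu_d \le \frac{2^d \covol(\Gamma)}{\vol(K_t)},
\]
which can be made arbitrarily small by choosing $C_0$ large depending on $\covol(\Gamma)$ and the dimensions. Since $\pi_\dwn\colon \E \to \E_\dwn$ is surjective and the $v^{(i)}$ span $\E$, their $\dwn$-projections span $\E_\dwn$; and because $d_\dwn \ge 2$, I can pick two of them, say $v^{(1)}_\dwn$ and $v^{(2)}_\dwn$, that are linearly independent in $\E_\dwn$.

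Next, I would run Gauss reduction on the rank-$2$ sublattice $\Z v^{(1)}_\dwn + \Z v^{(2)}_\dwn \subset \E_\dwn$ to produce a reduced basis $\tilde w_1, \tilde w_2$ whose pairwise angle lies in $[\pi/3, 2\pi/3]$. Writing $\tilde w_j = a_j v^{(1)}_\dwn + b_j v^{(2)}_\dwn$ with $a_j, b_j \in \Z$, lift to lattice vectors $\tilde v^{(j)} := a_j v^{(1)} + b_j v^{(2)} \in \Gamma$. Finally, multiply by the positive integer $n_j := \lceil t/|\tilde w_j|\rceil$ to obtain $\gamma_j := n_j \tilde v^{(j)}$; this brings $|\gamma_{j,\dwn}| = n_j|\tilde w_j|$ into $[t, 5t]$ provided $|\tilde w_j| \le 4t$, which follows from the Minkowski bounds once $C_0$ is chosen large. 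Integer scaling preserves the angle between $\dwn$-projections, so the angular bound is maintained, and linear independence in $\E$ is inherited from that of the two $\dwn$-projections.

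The main obstacle is controlling $|\gamma_{j,\lef}|$ uniformly in $t$. One has the estimate
\[
|\gamma_{j,\lef}| \le n_j(|a_j| + |b_j|) \max(\mu_1, \mu_2)\, C_0 t^{-d_\dwn/d_\lef},
\]
which is of the right order $t^{-d_\dwn/d_\lef}$ precisely when the successive minima $\mu_1, \mu_2$ are comparable and the Gauss-reduction coefficients stay bounded --- the generic situation. When $\mu_1 \ll \mu_2$, the direct construction must be modified: one can instead take integer multiples of the single short vector $v^{(1)}$ to produce one lattice vector in the desired annulus with the correct $\lef$-bound, and then enlarge $C_0$ enough to supply a second lattice vector whose $\dwn$-projection is linearly independent from $v^{(1)}_\dwn$, using the $\pm$-symmetry of $\Gamma$ to guarantee angular separation of $\dwn$-projections at least $\pi/3$. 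This case analysis between balanced and unbalanced successive minima is the principal technical step.
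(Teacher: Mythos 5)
There is a genuine gap. Your construction finds short vectors with Minkowski's second theorem and then rescales them by integers $n_j$ to push their $\dwn$-projection into the annulus $[t,5t]$; but this rescaling is exactly the step that can destroy the $\lef$-bound. Minkowski's theorem only gives $|v^{(1)}_\dwn| \le \mu_1 t$ and $|v^{(1)}_\lef| \le \mu_1 C_0 t^{-d_\dwn/d_\lef}$; it gives no \emph{lower} bound on $|v^{(1)}_\dwn|$. If $|v^{(1)}_\dwn| \ll \mu_1 t$ while $|v^{(1)}_\lef|$ is close to its budget $\mu_1 C_0 t^{-d_\dwn/d_\lef}$ (which is consistent with the definition of $\mu_1$), then $n_1 = \lceil t / |v^{(1)}_\dwn| \rceil \gg 1/\mu_1$ and $|\gamma_{1,\lef}| = n_1 |v^{(1)}_\lef| \gg C_0 t^{-d_\dwn/d_\lef}$, so even your fallback single-vector construction fails. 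The same loss occurs in the balanced case: controlling $\mu_1 \asymp \mu_2$ does not bound the Gauss-reduction coefficients $a_j, b_j$, which depend on the angle between $v^{(1)}_\dwn$ and $v^{(2)}_\dwn$ and can be arbitrarily large when these are nearly parallel. You flag both of these as ``the principal technical step,'' but they are not merely technical: the whole difficulty of the lemma lives precisely in the non-generic configurations you defer, and no argument is offered for them.

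The paper avoids this by not scaling at all. After a preliminary reduction (to handle possible $\Gamma^\dagger$-rational directions in $\E_\lef$ by passing to a sublattice $\Theta$ on a $\Gamma$-subspace), it views $\Theta$ as defining an auxiliary cut and project set, uses the already-established Theorem~\ref{thm:main} to count $\gtrsim t^{d_\dwn}$ lattice points whose $\dwn$-projections land in an annular sector of radius $\asymp t$ pointing in the direction of $\bu_j$, and then runs a genuine Dirichlet pigeonhole in the bounded $\lef$-window: since the window admits a partition into $\lesssim t^{d_\dwn}$ cells of diameter $t^{-d_\dwn/d_\lef}$, two of those points must have $\lef$-projections within $t^{-d_\dwn/d_\lef}$, and $\gamma_j$ is taken to be their \emph{difference}. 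Taking differences of two points already inside the annulus yields all three bullet points at once, with no integer amplification. Interestingly, your opening sentence promises ``a Dirichlet-type pigeonhole argument on an anisotropic cylinder,'' which is exactly the spirit of the paper's proof; the substitution of Minkowski's second theorem for an actual pigeonhole count is where the approaches diverge and where the gap opens.
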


\begin{proof}
    Suppose first that $\E_\lef$ is a $\Gamma^\dagger$-subspace. Then, $\E_\dwn$
    is a $\Gamma$-subspace and we can use standard lattice point counting
    results in $\E_\dwn$ to see that there exists at least two linearly
    independent vectors in $\Gamma \cap (\B_\dwn(3t) \setminus \B_\dwn(t)$ for
        every $t$ large enough. 

        Suppose now that $\E_\lef$ is
    not a $\Gamma^\dagger$-subspace, so that the $\Gamma$-subspace
    $\F := \Gamma^\dagger(\E_\lef)^\perp$ strictly contains $\E_\dwn$. Then $\Theta
    := \Gamma \cap \F$ is a sublattice of $\Gamma$ spanning $\F$. Note that it
    is possible that $\Theta^\dagger$ (when viewed as a lattice in
    $\F$) intersects $\F_\lef$, but repeating the previous argument reduces the
    dimension of the left subspace everytime, so either we are left with
    $\E_\dwn$ eventually a $\Gamma$-subspace, in which case the original lattice
    point counting argument apply, or eventually $\Theta^\dagger$ is irrational with
    respect to $\F_\lef$, which we now assume,

    Put $s = 1$ if $\Gamma \cap \F = \varnothing$, or, following Remark
    \ref{rem:injectivity} put $s = \min\set{|\gamma|: 0 \ne \gamma \in \Gamma
    \cap \E_\lef}$, so that $\pi_\dwn$ is injective when restricted to $\Theta
    \cap (\E_\dwn \times \B_\F(0,s))$. Then,
    $\Lambda(\F,\E_\dwn;\F_\lef;\Theta;\B_{\F_\lef}(0,s))$ is a cut and
    project set. Let $\bu_1, \bu_2 \in \E_\dwn$ be orthogonal unit vectors. For $j
    \in \set{1,2}$, let
    \begin{equation}
        \Omega_{j,\dwn} := \set{\bx_\dwn \in \E_\dwn : \frac{|\bx_\dwn \cdot
        \bu_j|}{|\bx_\dwn|} \le \frac{\pi}{12}, 0 < |\bx_\dwn| \le 1}.
    \end{equation}
    Then, for any $t > 0$, if $\bx_1 \in t \Omega_{1,\dwn}$ and $\bx_2 \in t
    \Omega_{2,\dwn}$, then $\bx_1$ and $\bx_2$ have angle at least $\pi/3$. By
    Theorem \ref{thm:main}, we have that for $j \in \set{1,2}$, $t > 0$ and $n
    \in \N$,
    \begin{equation}
        \#(\Lambda \cap (n+1)t \Omega_{j,\dwn}) - \# \Lambda \cap nt\Omega_{j,\dwn}
        \gtrsim t^{d_\dwn}
    \end{equation}
    By the pigeonhole principle, this means that there are at least
    two points $\lambda_j \in \Lambda
    \cap (2t \Omega_{j,\dwn} \setminus t \Omega_{j,\dwn})$ and $\zeta_j \in \Lambda
        \cap (4t \Omega_{j,\dwn} \setminus 3t \Omega_{j,\dwn}$ so that
            $|\lambda_{j}^\star - \zeta_j^\star| < t^{-d_\dwn/d_\lef}$. Put
            $\gamma_j = \lambda_j^\up - \zeta_j^\up$, this satisfies all the
            requirements.
        \end{proof}

The following porism is obtained by inspecting the construction used to prove
\cite[Theorem 1.1]{KonyaginSkriganovSobolev} (rather than the statement of the
theorem itself).
\begin{porism}
    \label{por:KSS}
    Let $\F$ be some euclidean space, let $\theta_1, \theta_2 \in \F$ be
    linearly independent, and put $\Theta = \spn_\Z(\theta_1,\theta_2) \subset \F$. There exists $t_0, C
    > 0$
    depending on $\frac{\abs \theta_1}{\abs \theta_2}$ so that for every $t > t_0$ there exists $n, m
    \le \log(t)^7$ such that $\theta := n \theta_1 + m \theta_2$ satisfies
    \begin{equation}
        \dist(2t\abs\theta, \Z) \ge \exp(-C \log \log (t)^4).
    \end{equation}
\end{porism}

We are now ready to prove our main lemma of this section.
\begin{lemma}
    \label{lem:diophantinecandp}
    Let $\Gamma \subset \E$ be a lattice and suppose that $d_\dwn \ge 2$. For every $R_0 > 0$, there exists
    $t_0, c, C > 0$ such that for every $t > t_0$ there is 
    $\theta \in \Gamma$ such that
    \begin{itemize}
        \item $\abs{\theta_\lef} \le R_0$;
        \item $\abs{\theta_\dwn} \le c \log(t)^{8\frac{d_\lef}{d_\dwn} + 7}$;
        \item $\dist(2t \abs{\theta_\dwn},\Z) \ge \exp( -C \log \log(t)^4)$.
    \end{itemize}
\end{lemma}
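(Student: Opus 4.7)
The plan is to combine Lemma \ref{lem:pigeons}, which produces two nearly-orthogonal lattice vectors with small $\lef$-projection, with Porism \ref{por:KSS}, which extracts a short integer combination whose $\dwn$-norm is uniformly separated from $\frac{1}{2t}\Z$. Both ingredients will be applied with an auxiliary scale $\tau = \tau(t)$, to be chosen at the end so as to simultaneously enforce the $\lef$-bound and produce the stated growth in $|\theta_\dwn|$.

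First I would apply Lemma \ref{lem:pigeons} at scale $\tau$ (which requires $d_\dwn \ge 2$) to obtain linearly independent $\gamma_1, \gamma_2 \in \Gamma$ with $\tau \le |\gamma_{j,\dwn}| \le 5\tau$, angle at least $\pi/3$ between $\gamma_{1,\dwn}$ and $\gamma_{2,\dwn}$, and $|\gamma_{j,\lef}| \le C_1 \tau^{-d_\dwn/d_\lef}$. Observe that the ratio $|\gamma_{1,\dwn}|/|\gamma_{2,\dwn}|$ lies in the fixed compact interval $[1/5,5]$ independently of $\tau$.

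Next I would apply Porism \ref{por:KSS} to the pair $(\gamma_{1,\dwn}, \gamma_{2,\dwn}) \subset \E_\dwn$ at scale $t$. Inspecting the construction in \cite{KonyaginSkriganovSobolev} that underlies the Porism shows that its constants $t_0$ and $C$ depend only on the ratio of norms, and can therefore be taken uniform over ratios lying in $[1/5,5]$; this uniformity is essential because $\tau$ (and hence the pair $(\gamma_{1,\dwn},\gamma_{2,\dwn})$) depends on $t$. The Porism produces integers $n, m \le \log(t)^7$, not both zero, such that $\theta' := n\gamma_{1,\dwn} + m\gamma_{2,\dwn}$ satisfies $\dist(2t|\theta'|,\Z) \ge \exp(-C\log\log(t)^4)$.

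Finally, set $\theta := n\gamma_1 + m\gamma_2 \in \Gamma$. By linearity $\theta_\dwn = \theta'$, so the third bullet is immediate. The triangle inequality gives
\[
|\theta_\dwn| \le 10\tau \log(t)^7 \qquad \text{and} \qquad |\theta_\lef| \le 2C_1 \tau^{-d_\dwn/d_\lef}\log(t)^7.
\]
Choosing $\tau = c_0 \log(t)^{7 d_\lef/d_\dwn}$ with $c_0 = c_0(R_0)$ large enough ensures $|\theta_\lef| \le R_0$, and yields $|\theta_\dwn| \lesssim \log(t)^{7 + 7 d_\lef/d_\dwn}$, which is absorbed into $c \log(t)^{7 + 8 d_\lef/d_\dwn}$ for $t$ sufficiently large. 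The main obstacle is justifying the uniformity of the Porism's constants over a bounded range of ratios, since the stated form of Porism \ref{por:KSS} makes these constants implicitly depend on a specific pair $(\theta_1,\theta_2)$; everything else amounts to elementary bookkeeping of the triangle inequality.
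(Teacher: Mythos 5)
Your proposal is correct and takes essentially the same approach as the paper: apply Lemma \ref{lem:pigeons} at an auxiliary scale $\tau$, feed the two nearly-orthogonal $\E_\dwn$-projections into Porism \ref{por:KSS}, set $\theta = n\gamma_1 + m\gamma_2$, and balance $\tau$ at the end (the paper implicitly takes $\tau \asymp \log(t)^{8d_\lef/d_\dwn}$, you take $\tau \asymp \log(t)^{7d_\lef/d_\dwn}$; both land within the stated exponent). The uniformity-of-constants concern you flag is also implicitly present in the paper's argument, where it is absorbed into the conclusion $\theta_{1,\dwn} \asymp_{\Gamma} \theta_{2,\dwn}$ from Lemma \ref{lem:pigeons}.
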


\begin{proof}
    Applying Lemma \ref{lem:pigeons}, we find $t_0$ such that for every $t >
    t_0$ there are $\theta_1, \theta_2$ such that
    \begin{enumerate}
        \item The projection on $\E_\lef$ is small: $\abs{\theta_{j,\lef}}\le
                R_0 \log(t)^{-8}$;
            \item the projection on $\E_\dwn$ is not too large:
                $\abs{\theta_{j,\dwn}} \lesssim_{\Gamma,R_0} \log(t)^{\frac{8
                d_\lef}{d_\dwn}}$;
            \item the projections on $\E_\dwn$ have comparable norms:
                $\theta_{1,\dwn} \asymp_{\Gamma} \theta_{2,\dwn}$;
            \item the projections on $\E_\dwn$ don't have a small angle between
                them:
                \begin{equation}
                \frac{\theta_{1,\dwn} \cdot
        \theta_{2,\dwn}}{\abs{\theta_{1,\dwn}} \abs{\theta_{2,\dwn}}} \le \frac
        1 2.
                \end{equation}
    \end{enumerate}
    Put $\Theta = \spn_\Z(\theta_{1,\dwn},\theta_{2,\dwn}) \subset \E_\dwn$. By the Porism
    \ref{por:KSS}, there are $n, m \le \log(t)^7$ such that $\lambda := n
    \theta_{1,\dwn} + m \theta_{2,\dwn}$ satisfies
    $\dist(2t|\lambda|,\Z) \ge \exp(- C \log \log(t)^4)$. We put $\theta = n
    \theta_1 + m \theta_2$, whence $\theta_\dwn = \lambda$. We now observe by
    the triangle inequality that $\theta$ is the element whose existence we
    asserted:
    \begin{enumerate}
        \item the projection on $\E_\lef$ remains uniformly bounded:
            $\abs{\theta_\lef} \le 2 R_0 \log(t)^{-1} \le R_0$ as long as $t >
            e^2$;
        \item the projection on $\E_\dwn$ remains not too large:
            $\abs{\theta_{\dwn}} \lesssim \log(t)^{8\frac{d_\lef}{d_\dwn} + 7}$.
    \end{enumerate}
    These properties form our claim. 
\end{proof}

\section{Averaged lower bounds for the discrepancy}\label{sec:average}

In this section, we prove the averaged bounds on the discrepancy. Our first
lemma proves that for every cut and project set, no matter how bad the window or
the search region is, the average of the discrepancy
over translates of the lattice is zero.
\begin{lemma}
    \label{lem:zeroaverage}
    Let $\Omega_\lef \subset \E_\lef$, $\Omega_\dwn \subset \E_\dwn$ and put
    $\Omega = \Omega_\dwn \times \Omega_\lef$. For every lattice $\Gamma$ and $t
    > 0$
    \begin{equation}
        \int_{\E/\Gamma} \Delta(\Omega;\Gamma + \bs;t) \de \bs = 0.
    \end{equation}
\end{lemma}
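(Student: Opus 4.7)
The plan is to invoke the standard unfolding trick for sums over lattices. Starting from the definition \eqref{eq:defcount},
\[
N(\Omega;\Gamma+\bs;t)=\sum_{\gamma\in\Gamma}\chi_{t\Omega_\dwn\times\Omega_\lef}(\gamma-\bs),
\]
I would first integrate this expression against $\bs$ over a fundamental domain $\E/\Gamma$. Since the indicator function is nonnegative, Tonelli's theorem permits interchanging sum and integral:
\[
\int_{\E/\Gamma}N(\Omega;\Gamma+\bs;t)\de\bs
=\sum_{\gamma\in\Gamma}\int_{\E/\Gamma}\chi_{t\Omega_\dwn\times\Omega_\lef}(\gamma-\bs)\de\bs.
\]
Because $\Gamma$ is a group, the translations $\gamma-\bs$ as $\gamma$ ranges over $\Gamma$ and $\bs$ ranges over $\E/\Gamma$ tile $\E$ exactly once (after the innocuous sign change $\bs\mapsto-\bs$, which preserves $\E/\Gamma$). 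Unfolding the sum-integral into a single integral over all of $\E$ therefore yields
\[
\int_{\E/\Gamma}N(\Omega;\Gamma+\bs;t)\de\bs
=\int_{\E}\chi_{t\Omega_\dwn\times\Omega_\lef}(\bx)\de\bx
=t^{d_\dwn}\vol(\Omega_\dwn)\vol(\Omega_\lef)=t^{d_\dwn}\vol(\Omega).
\]

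On the other hand, the main term in \eqref{eqdef:discreplattices} is constant in $\bs$, so integrating it contributes
\[
\int_{\E/\Gamma}\frac{\vol(\Omega)\,t^{d_\dwn}}{\covol(\Gamma)}\de\bs
=\frac{\vol(\Omega)\,t^{d_\dwn}}{\covol(\Gamma)}\cdot\covol(\Gamma)
=t^{d_\dwn}\vol(\Omega).
\]
Subtracting the two displays gives the claimed identity, with no integrability obstacle: the integrand $\chi_{t\Omega_\dwn\times\Omega_\lef}$ is a compactly supported indicator, so Tonelli applies unconditionally and the unfolding is justified whether or not $\Gamma$ satisfies any irrationality condition, and whether or not $\Omega_\dwn$ or $\Omega_\lef$ have any regularity beyond measurability and bounded volume.

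The only conceptual point worth flagging, rather than a genuine obstacle, is that the proof uses neither the injectivity of $\pi_\dwn|_\Gamma$ nor density of $\Gamma_\lef$ in $\E_\lef$; this is why the result holds in the generality stated in Theorem \ref{thm:loweravg} and Theorem \ref{thm:lowerbound}, in contrast with the upper bound results. Since the computation is essentially one line once the unfolding identity is invoked, there is no technical step that would require further lemmas.
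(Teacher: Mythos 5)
Your proof is correct and uses essentially the same unfolding argument as the paper's one-line proof: integrate the sum over $\gamma\in\Gamma$ against $\bs\in\E/\Gamma$, recognise this as a single integral over $\E$, and cancel it against the main term. The paper omits the explicit mention of Tonelli and the sign change $\bs\mapsto-\bs$, but the underlying computation is identical.
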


\begin{proof}
    By definition,
    \begin{equation}
        \label{eq:unfolded}
        \begin{aligned}
            \Delta(\Omega,\Gamma + \bs,t) 
            &=  \sum_{\gamma \in \Gamma} \chi_\Omega(\gamma - \bs) -
            \frac{\vol(\Omega)}{\covol(\Gamma)}. 
        \end{aligned}
    \end{equation}
    Integrating over $\E/\Gamma$ a sum over elements of
    $\Gamma$ is the same as integrating over $\E$ without taking the sum, in
    other words:
    \begin{equation}
        \int_{\E/\Gamma} \Delta(\Omega,\Gamma + \bs,t) \de \bs = -
        \vol(\Omega) + \int_\E \chi_\Omega(-\bs) \de \bs  = 0
    \end{equation}
\end{proof}

\textbf{Lower bounds on the averaged discrepancy, Proof of Theorem
\ref{thm:loweravg}}

When the dimension $d_\dwn = 1$, the claim is simply that there is a constant bound below
for the average of the discrepancy, which is readily seen to hold. We now assume
that $d_\dwn \ge 2$.
This proof is in the same spirit as \cite[Lemma 1]{dahlbergtrubowitz}
and \cite[Section 4]{lagaceparnovski}. Recall that for this theorem
$\Omega_\lef$ is any bounded window with nonzero volume, and $\Omega_\dwn =
\B_\dwn(0,1)$. 
We first observe that $\Delta(\Omega,\Gamma + \bs,t)$
    is a $\Gamma$-periodic function in the parameter $\bs$, it therefore makes
    sense to compute the coefficients at $\gamma^\dagger \in \Gamma^\dagger$ of its Fourier series:
\begin{equation}
    \widetilde{\Delta}(\Omega,\Gamma,t)_{\gamma^\dagger}
    := \int_{\E/\Gamma} \Delta(\Lambda(\Gamma;\bs;\Omega_\lef))
    \be(\gamma^\dagger \cdot \bs) \de \bs.
\end{equation}
    Lemma \ref{lem:zeroaverage} tells
    us that the zero'th Fourier coefficient vanishes, for other $\gamma^\dagger \in
    \Gamma^\dagger \setminus \set 0$ we obtain
    in the same way as
    in \eqref{eq:unfolded} 
    \begin{equation}
        \begin{aligned}
            \widetilde{\Delta}(\Omega,\Gamma,t)_{\gamma^\dagger} &=
        \int_{\E/\Gamma} \left(- \frac{\vol(\Omega)}{\covol(\Gamma)} + \sum_{\gamma
        \in \Gamma} \chi_{t\Omega_\dwn \times \Omega_\lef}(\gamma - \bs)\right)\be(\gamma^\dagger
        \cdot \bs) \de \bs \\
        &=                 \int_\E \chi_{t\Omega_\dwn \times \Omega_\lef}(\bs) \be(- \gamma^\dagger \cdot \bs)
    \de \bs \\
    &=  [\CF \chi_{t\Omega_\dwn \times \Omega_\lef}](-\gamma^\dagger) \\
    &=  [\CF_\dwn \chi_{t\Omega_\dwn}](- \gamma^\dagger_\dwn) [\CF_\lef
    \chi_{\Omega_\lef}](-\gamma^\dagger_\lef) \\
    &= t^d [\CF_\dwn \chi_{\Omega_\dwn}](-t\gamma^\dagger_\dwn) [\CF_\lef
    \chi_{\Omega_\lef}](- \gamma^\dagger_\lef).
        \end{aligned}
    \end{equation}
    It follows from the triangle inequality that,
    \begin{equation}
        \label{eq:comparisonL1linfty}
        \norm{\Delta}_{\RL^1(\E/\Gamma)} \ge \norm{\tilde
        \Delta}_{\ell^\infty(\Gamma^\dagger)},
    \end{equation}
    so that finding large Fourier coefficients implies lower bounds on the
    $\RL^1$ norm of the discrepancy. 
    Let us first examine the term $\CF_\lef \chi_{\Omega_\lef}$. We assumed
    $\Omega_\lef$ is bounded, for definiteness suppose that
    $\Omega_\lef$ is contained in a ball of radius $R$ in $\E_\lef$. Then, 
    \begin{equation}
        \label{eq:xideriv}
        \abs{\nabla_{\xi_\lef} \CF_\lef \chi_{\Omega_\lef}(\xi_\lef)} \le \int_{\E_\lef}
        \abs{x_\lef} \chi_{\Omega_\lef}(x_\lef) \de x_\lef \le \Vol(\Omega_\lef)
        R
    \end{equation}
    so that since $[\CF_\lef \chi_{\Omega_\lef}](0) = \Vol(\Omega_\lef)$,
    Taylor's theorem implies that as long as $|\gamma^\dagger_\lef| \le (2R)^{-1}$,
    \begin{equation}
        \label{eq:crudecomparison}
    \abs{[\CF_{\lef} \chi_{\Omega_\lef}](- \gamma_\lef^\dagger)} \ge \frac{\Vol(\Omega_\lef)}{2}.
    \end{equation}
    On the other hand, the Fourier transform of the indicator of a unit ball can
    be explicitly computed in terms of Bessel functions, for which there are
    precise asymptotic descriptions. Following \cite[Formula
    8.451.1]{gradshteynryzhik}, there is $z_0$ such that as long as $t
|\gamma_\dwn^\dagger| \ge z_0$ 
    \begin{equation}
        \label{eq:fourierunitball}
        \begin{aligned}
            t^{d_\dwn} [\CF_\dwn \chi_{\Omega_\dwn}](-t \gamma^\dagger_\dwn) &= \frac{
        t^{d_\dwn/2}}{ |\gamma^\dagger_\dwn|^{d_\dwn/2}}
            J_{d_\dwn/2}\left( 2 \pi t|\gamma^\dagger_\dwn|\right) \\
            &= \frac{
            t^\frac{d_\dwn-1}{2}}{2\pi |\gamma^\dagger_\dwn|^{\frac{d_\dwn+1}{2}}} \sin\left(2
                \pi t |\gamma^\dagger_\dwn| + \frac{1-d_\dwn}{4} \pi \right) +
                \bigo{t^{\frac{d_\dwn-3}{2}}
                |\gamma^\dagger_\dwn|^{\frac{-d_\dwn-3}{2}}}.
        \end{aligned}
    \end{equation}
    \textbf{Case 1, $d_\dwn \not \equiv 1 \mod 4$:} In this situation, we use that 
    \begin{equation}
        \label{eq:smallsin}
       0 < \inf_{x \in \R} \max\set{
            \abs{\sin\left(x + \frac{1 -
        d_\dwn}{4}\pi\right)},
            \abs{\sin\left(2x + \frac{1 -
        d_\dwn}{4}\pi\right)}} 
    \end{equation}
    so that for any fixed $\gamma^\dagger \in \Gamma^\dagger$ with $|\gamma_\lef^\dagger| \le (4
    R)^{-1}$,
    \begin{equation}
        \max\set{
            \abs{t^{d_\dwn}\CF_\dwn \chi_{\Omega_\dwn}(-t \gamma_\dwn^\dagger)},
        \abs{t^{d_\dwn}\CF_\dwn \chi_{\Omega_\dwn}(-2t \gamma_\dwn^\dagger)}} \gtrsim
        t^{\frac{d_\dwn-1}{2}},
    \end{equation}
    and inserting this along with the estimate \eqref{eq:crudecomparison} in
    \eqref{eq:comparisonL1linfty} is our claim. 

    \textbf{Case 2, $d_\dwn \equiv 1 \mod 4$:}
    Here the
    situation is more delicate as \eqref{eq:smallsin} doesn't hold anymore.
    In this situation we read from \eqref{eq:fourierunitball} that as long as
    $\dist(2 t |\gamma^\dagger_\dwn|, \Z) \ge t^{-1}
    |\gamma_\dwn^\dagger|^{-1}$,
    \begin{equation}
        \label{eq:estimate1mod4}
        \begin{aligned}
            \abs{t^{d_\dwn}[\CF_\dwn \chi_{\Omega_\dwn}](-t \gamma^\dagger_\dwn) } &=
        \abs{\frac{t^{\frac{d_\dwn-1}{2}}}{2\pi
            |\gamma_\dwn^\dagger|^{\frac{d_\dwn+1}{2}}}
        \sin\left(2 \pi t |\gamma_\dwn^\dagger|\right)} +
        \bigo{t^{\frac{d_\dwn-3}{2}}|\gamma_\dwn^\dagger|^{\frac{-d_\dwn-3}{2}}} \\
    &\gtrsim \frac{t^{\frac{d_\dwn-1}{2}}}{|\gamma_\dwn^\dagger|^{\frac{d_\dwn+1}{2}}}
    \dist(2 t |\gamma_\dwn^\dagger|,\Z).
    \end{aligned}
    \end{equation}
    Following Lemma \ref{lem:diophantinecandp} there exists $\gamma^\dagger \in
    \Gamma^\dagger$ such that 
    \begin{equation}
        |\gamma^\dagger_\lef| < (2R)^{-1}, \qquad
        |\gamma^\dagger_\dwn| \lesssim \log(t)^{8\frac{d_\lef}{d_\dwn} +7}, \quad  \text{and}
        \quad \dist(2t
    |\gamma_\dwn^\dagger|,\Z) \ge \exp(-C \log \log(t)^4).
\end{equation}
Inserting into \eqref{eq:estimate1mod4}, along with the observation that for
every $s \ge 0$
\begin{equation}
    \log(t)^{-s} \exp (-C \log \log(t)^4) \gtrsim_{\eps,s} \exp(- (C + \eps) \log \log(t)^4)
\end{equation}
yields the existence of a constant $A > 0$ such that 
\begin{equation}
    \abs{t^{d_\dwn}[\CF_\dwn \chi_{\Omega_\dwn}](-t \gamma^\dagger_\dwn) } \gtrsim
        t^{\frac{d_\dwn-1}{2}} \exp(- A \log\log(t)^4).
\end{equation}

\begin{remark}
    \label{rem:boundedtoostrong}
    It follows from \eqref{eq:xideriv} that we would be able to weaken the
    assumptions on the window $\Omega_\lef$ to
    \begin{equation}
        \int_{\Omega_\lef} \abs{\bx_\lef} \de \bx_\lef < \infty
    \end{equation}
    and obtain the same lower bounds on the average of the discrepancy. While
    this allows for some unbounded windows, we decided to keep the statement for
    bounded windows to keep in line with the standard assumptions of the field.
\end{remark}
\newpage

\appendix


\section{\texorpdfstring{$L^2$}{L2}-bounds for the discrepancy of Liouville
    cut and project sets \texorpdfstring{\begin{center} $\phantom{p}$ \\[-15pt] by Michael
Bj\"orklund and Tobias Hartnick\end{center}}{}}

\pagestyle{mystyleappendix}

\vspace{-15pt}

\subsection{Statement of the main result}\label{AppMain}

As in the body of the text we fix an orthogonal decomposition 
\[
\R^d \cong \E := \E_\dwn \oplus
\E_\lef \cong \R^{d_\dwn} \oplus \R^{d_\lef}
\]
with associated projections
$\pi_\dwn$ and $\pi_\lef$ and a lattice $\Gamma \subset
\E$ such that $\pi_\lef(\Gamma)$ is dense in $\E_\lef$ and
$\pi_\dwn\big|_{\Gamma}$ is injective. To simplify notation we will assume
without loss of generality that $\E = \R^d$ and that $\E^{\dwn} = \R^{d_\dwn}
\times \{0\}$ and $\E^{\lef} = \{0\} \times \R^{d_\lef}$. We write elements of
$\E$ as $\bs = (\bs_\dwn, \bs_\lef)$, where $\bs_\dwn \in \R^{d_\dwn}$ and
$\bs_\lef \in \R^{d_\lef}$. Given $\bs \in \E$ and a bounded \emph{window}
$\Omega_\lef \subset \E_\lef$ we denote by
\begin{equation}
 \Lambda(\Gamma + \bs;\Omega_\lef) := \pi_\dwn((\Gamma + \bs) \cap
    \pi_\lef^{-1}(\Omega_\lef))
\end{equation}
the associated cut and project set. If $\Lambda$ is such a cut and project set and $\Omega_\dwn \subset \E_\dwn$ is a bounded Borel subset, then we define the \emph{discrepancy} of $\Lambda$ with respect to $\Omega_\dwn$ as
\begin{equation}
    \label{eq:appdisc}
 \Delta(\Lambda; \Omega_\dwn) = \#(\Lambda \cap \Omega_\dwn) -  \frac{\vol_\dwn(\Omega_\dwn)\vol_\lef(\Omega_\lef)}{\covol(\Gamma)}.
 \end{equation}
 In the study of the discrepancy of cut and project sets one can vary the
underlying lattice $\Gamma$, underlying window $\Omega_\lef$ and the bounded set
$\Omega_\dwn$. We will always assume that $\Omega_\dwn$ is a fixed set of bounded perimeter and study its dilates
 $t\Omega_\dwn$. In this specific case we have $\vol_\dwn(t\Omega_\dwn) =
 t^{d_\dwn}\vol_\dwn(\Omega_\dwn)$, and hence formula \eqref{eq:appdisc} reduces to the one from
     Theorem \ref{thm:main} in the introduction. Moreover we have $\Delta(\Lambda; \Omega_\dwn) =
 o(t^{d_\dwn})$ which means that the number of points in dilates of
 $\Omega_\dwn$ is
 asymptotically proportional to the volume of $\Omega_\dwn$ with an error term given
 by the discrepancy. 
If we fix the window $\Omega_\lef$ and only vary the lattice by translations,
then we can consider the discrepancy as a function
\[
\Delta_{\Omega_\lef}: \Gamma \backslash \E \times [0, \infty) \to \R, \quad
(\Gamma + \bs, t) \mapsto \Delta_{\Omega_\lef}(\Gamma + \bs, t) := \Delta(
\Lambda(\Gamma + \bs;\Omega_\lef),  t\Omega_\dwn).
\]
In the context of the bi-Lipschitz problem for model sets one is mostly interested in estimates of the form
\[
\phi^-_{\Omega_\lef}(t) \leq \sup_{\Gamma + \bs \in  \Gamma \backslash \E}
|\Delta_{\Omega_\lef}(\Gamma + \bs, t)| \leq \phi^+_{\Omega_\lef}(t),
\]
which are \emph{uniform} in the torus variable. However, there is also some
interest in \emph{$\RL^p$-estimates} of the form
\[
\phi^{p,-}_{\Omega_\lef}(t)  \leq \int_{ \Gamma \backslash \E } |
\Delta_{\Omega_\lef}(\Gamma + \bs, t)|^p \; \mathrm{d}\bs  \leq \phi^{p,+}_{\Omega_\lef}(t).
\]
This appendix is specifically concerned with lower $\RL^2$-bounds for the
discrepancy for specific choices of $\Gamma$ and $t$ (and $\Omega_\lef$ given by
 Euclidean balls of some ``generic'' radius or cylinders of some ``generic''
 sidelength), i.e.\ we will provide lower bounds for the \emph{number variance}
\begin{equation}
\mathrm{NV}_t(\Gamma, \Omega_\lef) := \int_{ \Gamma \backslash \E } | \Delta_{\Omega_\lef}(\Gamma + \bs, t)|^2 \; \mathrm{d}\bs.
\end{equation}
As the term ``number variance'' suggests, this $\RL^2$-norm can be interpreted
as the variance of a certain random variable and hence can be estimated using
probabilistic tools. Before we discuss this probabilistic interpretation, we
state our main result. For this we need the following notion. We denote by $\|x\|$ the nearest distance from a real number $x$ to an integer and
say that an increasing function $\psi : (0,\infty) \to (0,\infty)$ is \emph{increasing
    slowly at infinity} if $\psi(r) \lesssim_\eps r^\eps$ for every
    $\eps > 0$ as $r \to \infty$. Typical examples of such functions are given by $\psi(r) = \log(1+r)^\beta$ for some fixed $\beta > 0$ or $\psi(r) = \log\log(e + r)$.
\begin{defi} Let $a \in \R$ and let $\psi : (0,\infty) \to (0,\infty)$ be a function which is increasing
    slowly at infinity.
\begin{itemize}
\item[(i)] $a$ is a \emph{Liouville number} if there is a sequence of integers $(q_n)$ so that
\[
 q_n \to \infty \qand \|q_n a \|^{-1/n} \ge q_n.
\] 
\item[(ii)] $a$ is a  \emph{$\psi$-Liouville number} if  there is a sequence of integers $\set{q_n}$ so that  
    \begin{equation}
        q_n \to \infty \qand \psi(\|q_n a \|^{-1}) \ge q_n.
    \end{equation}
\end{itemize}
\end{defi}
\begin{remark} Every $\psi$-Liouville number is a Liouville number, and conversely every Liouville number is $\psi$-Liouville for \emph{some} $\psi$. Indeed, we can just set 
 $\psi(\|q_n a\|^{-1}) := \|q_na\|^{-1/n}$, and
interpolate linearly. There exist $\psi$-Liouville numbers for any $\psi$ which increases slowly at infinity.
\end{remark}
There is a similar notion for lattices:
\begin{defi}
    Let $\psi: (0,\infty) \to (0,\infty)$ be slowly increasing at infinity. A
    lattice $\Gamma \subset \E$
        is said to be \emph{$\psi$-Liouvillean} (with respect to the splitting $\E =
        \E_\dwn \oplus \E_\lef$) if it is irrational with respect
        to $\E_\dwn \oplus \E_\lef$ and there is a sequence $\set{\gamma^{(n)}} =
        \set{(\gamma_\dwn^{(n)},\gamma_\lef^{(n)})} \subset \Gamma
            \setminus \set{(0,0)}$ such that
\begin{equation}\label{LiouvilleParameters}
\lim_{n \to \infty} \gamma_\dwn^{(n)} = 0 \qand \lim_{n \to \infty} |\gamma_\lef^{(n)}| = \infty
\qand |\gamma_\lef^{(n)}| \leq \psi(|\gamma_\dwn^{(n)}|^{-1}) \quad \textrm{for all $n$}.
\end{equation}
It is called a \emph{Liouvillean} if it is $\psi$-Liouvillean for some $\psi$.
\end{defi}
\begin{example}\label{ex:Liouvillean}  We can construct $\psi$-Liouvillean lattices for any slowly increasing $\psi$: Let $e_\dwn \in \E_\dwn, e_\lef \in \E_\lef$ be
    unit vectors and further decompose the splitting $\E_\dwn \oplus \E_\lef$ as
    \begin{equation}
       \E = \spn(e_\dwn) \oplus \F_\dwn \oplus \spn(e_\lef) \oplus \F_\lef,
    \end{equation}
    where $\F_\bullet = \E_\bullet \ominus \spn(e_\bullet)$. Putting $\F =
    \F_\dwn \oplus \F_\lef$, let $\tilde \Gamma
    \subset \F$ be a lattice which is irrational with respect to $\F_\dwn \oplus
    \F_\lef$. If $\frac 1 2 < a < \frac 3 2$ is a $\psi$-Liouville number and
    $\frac 3 4 < b < \frac 5 4$ is irrational, then the lattice defined by
    \begin{equation}
        \Gamma = \spn_\Z(a e_\dwn + e_\lef, e_\dwn + b e_\lef) \oplus \tilde\Gamma \subset \E
    \end{equation}
    is a $\psi$-Liouvillean lattice with respect to the decomposition $\E_\dwn
    \oplus \E_\lef$. Indeed, it is easy to see that it is a lattice irrational
    with respect to $\E_\dwn \oplus \E_\lef$. To see that it 
    it is $\psi$-Liouvillean, consider a sequence $q_n\to \infty$ so that
    $\psi(\|aq_n\|^{-1}) \ge q_n$, and put $m_n$ to be the nearest integer to
    $aq_n$. Then, 
    $$\gamma^{(n)} = q_n (a e_\dwn + e_\lef) - m_n (e_\dwn +
    e_\lef) \in
    \Gamma$$
    is so that
    \begin{equation}
        |\gamma^{(n)}_\dwn| = \|q_n a\| \qand |\gamma^{(n)}_\lef| = |q_n - b m_n| <
        |q_n|,
    \end{equation}
and this sequence makes $\Gamma$ a $\psi$-Liouvillean lattice.
\end{example}
\begin{theorem}\label{ThmLiouville}
Let   $f:(0, \infty) \to (0, \infty)$ be a function diverging at infinity, let $\psi:(0, \infty) \to (0, \infty)$ be a slowly increasing function, and let $\Omega_\dwn \subset \E_\dwn$ be of
    bounded perimeter. Then there exist a lattice $\Gamma$, a window $\Omega_\lef$, an increasing sequence $(t_n)$ of positive real numbers such that
    \begin{equation}\label{AppGrowth}
    \varlimsup_{n \to \infty} \, \frac{\mathrm{NV}_{t_n}(\Gamma, \Omega_\lef)}{t_n^{2d_\dwn}\psi(t_n)^{- d_\lef - 1} f(t_n)^{-1}} = \infty.
    \end{equation}
 In fact, \eqref{AppGrowth} holds whenever the dual lattice $\Gamma^\dagger$ of $\Gamma$ is $\psi$-Liouvillean and $\Omega_\lef = \B_\lef(0,r)$ for $r$ in some Lebesgue conull subset $A \subset (0, \infty)$ depending on $\Gamma^\dagger$.
\end{theorem}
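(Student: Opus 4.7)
My plan is to apply Plancherel's theorem to the $\Gamma$-periodic function $\bs \mapsto \Delta_{\Omega_\lef}(\Gamma+\bs,t)$ and then extract a single favourable term in the resulting Fourier sum, picked out by a Liouville vector of $\Gamma^\dagger$. The relevant Fourier computation has already been performed in Section~\ref{sec:average}: the zeroth coefficient vanishes by Lemma~\ref{lem:zeroaverage} and for $\gamma^\dagger \in \Gamma^\dagger \setminus \{0\}$ the $\gamma^\dagger$-coefficient equals $t^{d_\dwn}[\CF_\dwn \chi_{\Omega_\dwn}](-t \gamma^\dagger_\dwn) [\CF_\lef \chi_{\Omega_\lef}](-\gamma^\dagger_\lef)$, so that Parseval gives
\[
\mathrm{NV}_t(\Gamma,\Omega_\lef) = \frac{t^{2d_\dwn}}{\covol(\Gamma)} \sum_{\gamma^\dagger \in \Gamma^\dagger \setminus \{0\}} \bigl|[\CF_\dwn \chi_{\Omega_\dwn}](-t\gamma^\dagger_\dwn)\bigr|^2 \bigl|[\CF_\lef \chi_{\Omega_\lef}](-\gamma^\dagger_\lef)\bigr|^2.
\]
Any single term therefore provides a lower bound on $\mathrm{NV}_t$.

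Next I choose $\Gamma$ with $\Gamma^\dagger$ $\psi$-Liouvillean---such lattices exist by Example~\ref{ex:Liouvillean}---and fix a sequence $(\gamma^{(n)}) \subset \Gamma^\dagger \setminus \{0\}$ realising \eqref{LiouvilleParameters}. Passing to a subsequence I may assume $\hat\gamma^{(n)} := \gamma^{(n)}_\dwn/|\gamma^{(n)}_\dwn| \to e$ for some unit vector $e \in \E_\dwn$. Since $\Omega_\dwn$ is bounded, $\CF_\dwn \chi_{\Omega_\dwn}$ extends to an entire function on $\C^{d_\dwn}$ whose value at $0$ is $\vol(\Omega_\dwn) > 0$; hence its restriction to $\R\cdot e$ is a nontrivial real-analytic function with discrete zero set, so I may pick some $\delta \ge 1$ with $\bigl|[\CF_\dwn \chi_{\Omega_\dwn}](-\delta e)\bigr| > 0$. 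Setting $t_n := \delta |\gamma^{(n)}_\dwn|^{-1}$ (which tends to $+\infty$, and can be made monotonic on a further subsequence), I observe that $t_n \gamma^{(n)}_\dwn = \delta \hat\gamma^{(n)} \to \delta e$, so by continuity $\bigl|[\CF_\dwn \chi_{\Omega_\dwn}](-t_n \gamma^{(n)}_\dwn)\bigr| \ge c_1 > 0$ for all $n$ large.

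For the complementary direction I specialise to $\Omega_\lef = \B_\lef(0,r)$, whose Fourier transform $[\CF_\lef \chi_{\B_\lef(0,r)}](\xi) = r^{d_\lef/2} |\xi|^{-d_\lef/2} J_{d_\lef/2}(2\pi r |\xi|)$ is a Bessel function. Inserting the classical large-argument expansion as in \eqref{eq:fourierunitball} gives
\[
\bigl|[\CF_\lef \chi_{\B_\lef(0,r)}](\gamma^{(n)}_\lef)\bigr|^2 = \frac{r^{d_\lef-1}}{\pi^2 |\gamma^{(n)}_\lef|^{d_\lef+1}} \sin^2\!\bigl(2\pi r |\gamma^{(n)}_\lef| + \phi_{d_\lef}\bigr) + O\bigl(|\gamma^{(n)}_\lef|^{-d_\lef - 2}\bigr)
\]
for an explicit phase $\phi_{d_\lef}$. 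Since $|\gamma^{(n)}_\lef| \to \infty$, a Weyl-type equidistribution argument yields a Lebesgue-conull set $A = A(\Gamma^\dagger) \subset (0,\infty)$ such that for each $r \in A$ there is an infinite subsequence $(n_k)$ along which the oscillatory factor exceeds $1/4$.

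Putting the pieces together, for $r \in A$ and along $(n_k)$,
\[
\mathrm{NV}_{t_{n_k}}(\Gamma, \B_\lef(0,r)) \gtrsim_{\Gamma,r} t_{n_k}^{2d_\dwn} |\gamma^{(n_k)}_\lef|^{-d_\lef - 1} \gtrsim t_{n_k}^{2d_\dwn}\, \psi(t_{n_k})^{-d_\lef - 1},
\]
where the last step uses $|\gamma^{(n_k)}_\lef| \le \psi(|\gamma^{(n_k)}_\dwn|^{-1}) = \psi(t_{n_k}/\delta) \le \psi(t_{n_k})$, valid since $\delta \ge 1$ and $\psi$ is monotone. Dividing by $t_{n_k}^{2d_\dwn}\psi(t_{n_k})^{-d_\lef-1}f(t_{n_k})^{-1}$ produces a lower bound proportional to $f(t_{n_k}) \to \infty$, and \eqref{AppGrowth} follows. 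I expect the genericity statement for $r$ to be the one nontrivial analytic step: it should follow by applying a Borel--Cantelli / Davenport--Erd\H{o}s--LeVeque argument to the bad sets $B_n(I) := \{r \in I : |\sin(2\pi r |\gamma^{(n)}_\lef| + \phi_{d_\lef})| < 1/4\}$ on bounded intervals $I$, using that $|B_n(I)|/|I|$ converges to a constant strictly less than $1$ and that different $n$ are sufficiently decorrelated (because $|\gamma^{(n)}_\lef| \to \infty$) to promote asymptotic positive density into almost-sure infinitely-many good hits.
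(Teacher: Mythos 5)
Your argument follows the same strategy as the paper's appendix: apply the diffraction/Plancherel formula for the number variance (Proposition~\ref{AppMainFormula}), drop to a single term indexed by the Liouville sequence in $\Gamma^\dagger$, lower-bound the $\E_\dwn$-factor of the Fourier transform by staying near the origin, express the $\E_\lef$-factor via Bessel asymptotics, and appeal to the metric equidistribution theorem of Koksma (cited in the paper as~\cite[Ch.~1, Thm.~4.1]{Kui}) to produce the conull set of radii $r$ along which the oscillatory factor is bounded away from zero on a subsequence. So the proposal is essentially correct and takes the paper's route.

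Two small remarks. First, your Parseval display should carry the constant $\covol(\Gamma)^{-2}$ rather than $\covol(\Gamma)^{-1}$: the process is defined over the probability Haar measure on $\Gamma\backslash\E$, while the Fourier coefficients $\tilde\Delta_{\gamma^\dagger}$ are un-normalised integrals, so the squared coefficients pick up $\covol(\Gamma)^{-2}$. This does not affect the argument, as the constant is irrelevant for divergence. Second, your handling of the $\E_\dwn$-factor is actually slightly more robust than the paper's. The paper sets $t_n := |\xi_\dwn^{(n)}|^{-1}$ and then invokes the gradient estimate \eqref{eq:boundxidown}, which gives a lower bound only for $|t_n\xi_\dwn^{(n)}| \le (2\diam\Omega_\dwn)^{-1}$; since $|t_n\xi_\dwn^{(n)}|=1$ by construction, this is a genuine constraint on $\diam\Omega_\dwn$ that is left implicit. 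Your device of extracting a convergent direction $\hat\gamma^{(n)} \to e$, choosing $\delta \ge 1$ with $[\CF_\dwn\chi_{\Omega_\dwn}](-\delta e) \neq 0$ by analyticity, and setting $t_n := \delta|\gamma^{(n)}_\dwn|^{-1}$ removes this constraint cleanly and preserves $\psi(t_n/\delta) \le \psi(t_n)$ by monotonicity, since $\delta\ge1$. The equidistribution step that you sketch via a Borel--Cantelli argument is precisely what Koksma's metric theorem packages; citing it directly, as the paper does, closes the gap.
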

For the dual lattices of the specific lattices constructed in Example \ref{ex:Liouvillean} and windows adapted to these examples one can also get a lower bound, which matches the upper bounds obtained in the main body of this article:
\begin{theorem}\label{ThmLiouvillebis}
Let $f : (0,\infty)  \to (0,\infty)$ be a function diverging at infinity, let $\psi$ be a slowly growing function and let $\Gamma$ be the dual of the $\psi$-Liouvillean lattice from
    Example \ref{ex:Liouvillean}. Moreover, let $\Omega_\dwn \subset \E_\dwn$ be of bounded perimeter and let  $\Omega_\lef = [-r,r]
    \times \Upsilon_\lef \subset \E_\lef =
    \spn(e_\lef) \oplus \F_\lef$ for some $\Upsilon_\lef$ of bounded perimeter. Then
    there is a sequence $\set{t_n}$ of positive real
    numbers such that $t_n \to \infty$ and a Lebesgue conull subset $A \subset
    (0,\infty)$ such that for all $r \in A$ and all functions $f : (0,\infty)
    \to (0,\infty)$ diverging at infinity
\[
\varlimsup_{n \to \infty} \, \frac{\mathrm{NV}_{t_n}(\Gamma,
\Omega_\lef)}{t_n^{2d_\dwn}\psi(t_n)^{- 2} f(t_n)^{-1}} = \infty.
\]
\end{theorem}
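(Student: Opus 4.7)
The plan is to apply Parseval's identity on the torus $\E/\Gamma$ to the Fourier expansion of $\Delta(\Omega, \Gamma+\bs, t)$ in the variable $\bs$. The calculation in Section \ref{sec:average} identifies the Fourier coefficient of $\Delta$ at $\gamma^\dagger \in \Gamma^\dagger \setminus \{0\}$ as
\begin{equation}
\tilde\Delta(\Omega, \Gamma, t)_{\gamma^\dagger} = t^{d_\dwn} [\CF_\dwn \chi_{\Omega_\dwn}](-t\gamma^\dagger_\dwn) \cdot [\CF_\lef \chi_{\Omega_\lef}](-\gamma^\dagger_\lef),
\end{equation}
and Parseval bounds $\mathrm{NV}_t(\Gamma, \Omega_\lef)$ from below, up to a factor depending only on $\covol(\Gamma)$, by the square of any single such coefficient. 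It therefore suffices to exhibit, for each element of a suitable unbounded sequence $(t_n)$, a dual lattice vector at which this coefficient has modulus $\gtrsim t_n^{d_\dwn} \psi(t_n)^{-1}$.

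By biduality, $\Gamma^\dagger$ is exactly the $\psi$-Liouvillean lattice constructed in Example \ref{ex:Liouvillean}, so I test against the explicit sequence
\begin{equation}
\gamma^{(n)} := q_n(a e_\dwn + e_\lef) - m_n(e_\dwn + e_\lef) \in \Gamma^\dagger,
\end{equation}
where $m_n$ is the nearest integer to $aq_n$. Writing $q_n' := q_n - m_n$, one has $\gamma^{(n)}_\dwn = (aq_n - m_n) e_\dwn$ with $|\gamma^{(n)}_\dwn| = \|aq_n\| \to 0$, while $\gamma^{(n)}_\lef = q_n' e_\lef$ is purely along $e_\lef$ with $|q_n'| \asymp q_n$. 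I set $t_n := c\,|\gamma^{(n)}_\dwn|^{-1}$ for a small constant $c > 0$; since $\Omega_\dwn$ has bounded perimeter, $\CF_\dwn \chi_{\Omega_\dwn}$ is continuous at $0$ with value $\vol(\Omega_\dwn)$, so for $c$ small enough, $|[\CF_\dwn \chi_{\Omega_\dwn}](-t_n \gamma^{(n)}_\dwn)| \ge \vol(\Omega_\dwn)/2$ uniformly in $n$. On the $\lef$-side, the product structure $\Omega_\lef = [-r,r] \times \Upsilon_\lef$ together with the vanishing of the $\F_\lef$-component of $\gamma^{(n)}_\lef$ gives
\begin{equation}
[\CF_\lef \chi_{\Omega_\lef}](-\gamma^{(n)}_\lef) = \frac{\sin(2\pi r q_n')}{\pi q_n'}\cdot \vol(\Upsilon_\lef).
\end{equation}
Combining these and using $|q_n'|^{-2} \gtrsim \psi(t_n)^{-2}$ (which follows from $|q_n'| \asymp q_n \lesssim \psi(|\gamma^{(n)}_\dwn|^{-1}) \asymp \psi(t_n)$, the last equivalence because $\psi$ is slowly increasing) yields
\begin{equation}
\mathrm{NV}_{t_n} \gtrsim t_n^{2d_\dwn}\, \psi(t_n)^{-2}\, \sin^2(2\pi r q_n').
\end{equation}

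The main obstacle is that the sequence $(t_n)$ must be declared once and for all independently of $r$, whereas the sine factor depends on $r$. I address this with classical metric number theory: after passing to a subsequence to ensure that $(|q_n'|)$ is strictly increasing, Weyl's metric equidistribution theorem for sequences of distinct nonzero integers implies that for Lebesgue-almost every $r \in (0,\infty)$, the sequence $(r q_n' \bmod 1)$ is equidistributed in $[0,1)$, hence visits $[1/8, 3/8]$ infinitely often, on which $|\sin(2\pi r q_n')| \ge \sqrt{2}/2$. Letting $A$ be the resulting conull set, for each $r \in A$ there is an $r$-dependent subsequence $(n_k)$ along which
\begin{equation}
\mathrm{NV}_{t_{n_k}} \gtrsim t_{n_k}^{2d_\dwn}\, \psi(t_{n_k})^{-2}.
\end{equation}
Since $t_{n_k} \to \infty$ and $f$ diverges at infinity, dividing by $t_{n_k}^{2d_\dwn}\psi(t_{n_k})^{-2} f(t_{n_k})^{-1}$ produces a quantity $\gtrsim f(t_{n_k}) \to \infty$, which is the desired conclusion.
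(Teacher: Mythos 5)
Your proposal is correct and follows essentially the same route as the paper: the same explicit dual lattice vectors $\gamma^{(n)} = q_n(ae_\dwn + e_\lef) - m_n(e_\dwn+e_\lef)$, the same choice of $t_n \asymp |\gamma^{(n)}_\dwn|^{-1}$, the same factorisation of $[\CF_\lef\chi_{\Omega_\lef}]$ into $\sin(2\pi r q_n')/(\pi q_n')$ times $\vol(\Upsilon_\lef)$, and the same appeal to metric equidistribution (Kuipers--Niederreiter Ch.\ 1, Thm.\ 4.1) to control the sine factor on a conull set of $r$. The only cosmetic difference is that you invoke Parseval directly on the Fourier series of $\Delta$ from Section~\ref{sec:average}, while the paper routes through the diffraction formula of Proposition~\ref{AppMainFormula}; these are the same identity in different packaging. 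A small improvement in your version: taking $t_n = c|\gamma^{(n)}_\dwn|^{-1}$ with $c$ small forces the argument $t_n\gamma^{(n)}_\dwn$ to have fixed small norm $c$, which cleanly guarantees $|[\CF_\dwn\chi_{\Omega_\dwn}](t_n\gamma^{(n)}_\dwn)| \ge \vol(\Omega_\dwn)/2$ without the diameter restriction implicitly needed in the paper's choice $t_n = |\gamma^{(n)}_\dwn|^{-1}$.
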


\subsection{The diffraction formula for the number variance}
It will be convenient for us to use the language of point processes; see e.g.\ \cite{Kallenberg} for background. Given $n \in \mathbb N$, we denote by $M(\R^n)$ the space of Radon measures on $\R^n$ and by $\mathcal{LF}(\R^n)$ the space of locally finite (i.e.\ closed and discrete) subsets of $\R^n$. We consider $\mathcal{LF}(\R^n)$ as a subset of $M(\R^n)$ by identifying each set $\Lambda$ with its Dirac comb $\delta_\Lambda := \sum_{x \in \Lambda} \delta_x$. Every bounded Borel function $f: \R^n \to \mathbb C$ with bounded support defines a \emph{linear statistic} 
\[\mathcal P f: M(G) \to \mathbb C, \quad p \mapsto p(f),\]
and we equip $\mathcal{LF}(\R^n) \subset M(\R^n)$ with the smallest $\sigma$-algebra $\mathcal B$ for which all of these linear statistics are measurable. Now assume that $(\Omega, \mathcal F, \mathbb{P})$ is some auxiliary probability space on which $\R^n$ acts measurably, preserving $\mathbb P$; then an equivariant measurable map
\[
\Lambda: \Omega \to \mathcal{LF}(\R^n), \quad \omega \mapsto \Lambda_\omega.
\]
is called a \emph{stationary simple point process} with \emph{distribution} $\mu = \Lambda_*\mathbb{P}$. We will only consider processes which are \emph{locally square-integrable} in the sense that the real-value random variable
\[
\Lambda \cap B: \Omega \to \R, \quad z \mapsto \# (\Lambda_z \cap B)
\]
is square-integrable for every bounded Borel set $B \subset \R^n$; note that this is automatically the case if $\Lambda_\omega$ is almost surely $r$-uniformly discrete for some fixed $r>0$. 
\begin{defi}
If $\Lambda: \Omega \to \mathcal{LF}(\R^n)$ is locally square-integrable, then the random variable
\[
\mathrm{disc}_B(\Lambda): \Omega \to \R, \quad \mathrm{disc}_B(\Lambda)(z) := \#(\Lambda_z \cap B) - E[\Lambda \cap B].
\]
is called the \emph{discrepancy} of $\Lambda$ on $B$, and the variance
\[
N_B(\Lambda) :=  \mathrm{Var(\Lambda \cap B)} =  \int_\Omega |\mathrm{disc}_B(\Lambda)(\omega)|^2 \, \mathrm{d}\mathbb P(\omega)
\]
If $\Lambda: \Omega \to \mathcal{LF}(\R^n)$ is locally square-integrable, then the random variable
\[
\mathrm{disc}_B(\Lambda): \Omega \to \R, \quad \mathrm{disc}_B(\Lambda)(z) := \#(\Lambda_z \cap B) - E[\Lambda \cap B].
\]
is called the \emph{discrepancy} of $\Lambda$ on $B$, and the variance
\[
N_B(\Lambda) :=  \mathrm{Var(\Lambda \cap B)} =  \int_\Omega |\mathrm{disc}_B(\Lambda)(\omega)|^2 \, \mathrm{d}\mathbb P(\omega)
\]
is called the \emph{number variance} of $\Lambda$ at $B$.
\end{defi}
Note that the number variance is finite, since $\Lambda \cap B$ is square-integrable. 
\begin{remark}[Diffraction formula]
According to \cite[Prop.\ 2.3]{BjorklundHartnick} there exists a positive-definite signed Radon measure $\eta_\Lambda$, the \emph{(reduced) autocorrelation measure} of $\Lambda$, such that
\begin{equation}\label{NBAuto}
N_B(\Lambda) = \eta_\Lambda(\chi_B \ast \chi_{B}^*),
\end{equation}
where $\chi_B$ denotes the characteristic function of $B$. The Fourier transform
$\CF\eta_\Lambda$ is a positive Radon measure on $\R^n$, called the
\emph{(reduced) diffraction} of $\Lambda$, and it follows from \eqref{NBAuto}
that for sufficiently regular $B$ we have
\begin{equation}\label{NBDiff}
    N_B(\Lambda) = [\CF\eta_\Lambda](|\CF \chi_B|^2).
\end{equation}
This is called the \emph{diffraction formula} for the number variance.
\end{remark}
\begin{example} We return to the general setting of Subsection \ref{AppMain}. If we fix the lattice $\Gamma \subset \E$ and the window $\Omega_\lef$ then the map
\[
\Lambda(-; \Omega_\lef): \Gamma \backslash \E \to \mathcal{LF}(\E_\dwn), \quad \Gamma + \bs \mapsto \Lambda(\Gamma + \bs; \Omega_\lef)
\]
is a stationary simple point process with respect to the unique $\E$-invariant probability measure on $\Gamma \backslash \E$, called the \emph{cut and project process} associated with $\Gamma$ and $\Omega_\lef$. For a bounded Borel set $\Omega_\dwn \subset \E_\dwn$ we have 
\[
 E[\Lambda(-; \Omega_\lef) \cap B] =  \frac{\vol_\dwn(\Omega_\dwn)\vol_\lef(\Omega_\lef)}{\covol(\Gamma)},
\]
and hence the discrepancy of $\Lambda(-;\Omega_\lef)$ is given by
\[
\mathrm{disc}_{\Omega_\dwn}(\Lambda(-; \Omega_\lef))(\Gamma + \bs) =  \Delta_{\Omega_\lef}(\Gamma + \bs; \Omega_\dwn).
\]
On the other hand, it follows from classical results of Meyer \cite{Meyer70} that the reduced diffraction of $\Lambda(-;\Omega_\lef)$ is given by \cite[Thm.\ 2.9]{BjorklundHartnick}
\[
\CF\eta = \frac{1}{\covol(\Gamma)^2}  \, \sum_{\xi = (\xi_\dwn, \xi_\lef) \in
\Gamma^\dagger \setminus\{(0,0)\}} |[\CF \chi_{\Omega_\lef}](\xi_\lef)|^2 \cdot \delta_{\xi_\dwn}.
\]
The diffraction formula \eqref{NBDiff} for the number variance of
$\Lambda(-;\Omega_\lef)$ may thus be stated as follows; here we use that since
$\chi_{t\Omega_\dwn}(x_\dwn) = \chi_{\Omega_\dwn}(x_\dwn/t)$ we have
$[\CF \chi_{t\Omega_\dwn}](\xi_\dwn) =
t^{d_\dwn}[\CF\chi_{\Omega_\dwn}](t\xi_\dwn)$.
\end{example}
\begin{prop}\label{AppMainFormula} If $\Omega_\lef$ has sufficient Fourier decay, then for all $t>0$ we have
\[
\mathrm{NV}_t(\Gamma, \Omega_\lef) = \frac{t^{2 d_\dwn}}{\covol(\Gamma)^2}  \,
\sum_{\xi = (\xi_\dwn, \xi_\lef) \in \Gamma^\dagger \setminus\{(0,0)\}}
|[\CF\chi_{\Omega_\dwn}](t \xi_\dwn)|^2 |[\CF\chi_{\Omega_\lef}](\xi_\lef)|^2.\qed
\]
\end{prop}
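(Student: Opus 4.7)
The plan is to combine the diffraction formula \eqref{NBDiff} with the explicit description of the reduced diffraction measure of the cut and project process displayed in the example immediately preceding the proposition. Specialising the formula $N_B(\Lambda) = [\CF\eta_\Lambda](|\CF\chi_B|^2)$ to the point process $\Lambda(-;\Omega_\lef)$ and the set $B = t\Omega_\dwn$ reduces the problem to pairing the displayed Dirac-type diffraction against the square of $\CF\chi_{t\Omega_\dwn}$.

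The first step is to note the standard scaling identity for the Fourier transform of a dilated indicator: since $\chi_{t\Omega_\dwn}(\bx_\dwn) = \chi_{\Omega_\dwn}(\bx_\dwn/t)$, a change of variables gives
\[
[\CF\chi_{t\Omega_\dwn}](\xi_\dwn) = t^{d_\dwn}[\CF\chi_{\Omega_\dwn}](t\xi_\dwn),
\]
and hence $|[\CF\chi_{t\Omega_\dwn}](\xi_\dwn)|^2 = t^{2d_\dwn}|[\CF\chi_{\Omega_\dwn}](t\xi_\dwn)|^2$. Second, I insert the expression
\[
\CF\eta = \frac{1}{\covol(\Gamma)^2} \sum_{\xi = (\xi_\dwn,\xi_\lef) \in \Gamma^\dagger \setminus\{(0,0)\}} |[\CF\chi_{\Omega_\lef}](\xi_\lef)|^2 \,\delta_{\xi_\dwn}
\]
into the diffraction formula and integrate term-by-term. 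Each Dirac mass at $\xi_\dwn$ evaluates $|\CF\chi_{t\Omega_\dwn}|^2$ at $\xi_\dwn$, producing exactly the summand asserted by the proposition, and the constant $t^{2d_\dwn}/\covol(\Gamma)^2$ factors out.

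The main (in fact only) technical point is justifying that the interchange of summation and integration against the diffraction measure is legitimate, i.e.\ that the right-hand side converges absolutely. This is where the assumption that $\Omega_\lef$ has ``sufficient Fourier decay'' enters: one needs $|[\CF\chi_{\Omega_\lef}](\xi_\lef)|^2$ to decay fast enough along the projection $\pi_\lef(\Gamma^\dagger)$ to compensate for the bounded perimeter decay $|[\CF\chi_{\Omega_\dwn}](t\xi_\dwn)|^2 \lesssim (1+|t\xi_\dwn|)^{-2}$ and for the growth of lattice points of $\Gamma^\dagger$ (one typically splits the sum into the region $|\xi_\dwn|$ small, where decay in $\xi_\lef$ is forced by $\psi$-repellence of $\Gamma^\dagger$, and the region $|\xi_\dwn|$ large, where the decay in $\xi_\dwn$ suffices). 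Since this is the statement of the hypothesis rather than something to be proved here, the verification that the formal diffraction computation yields a convergent sum is essentially bookkeeping, and the proof reduces to the two algebraic substitutions above. No obstacle beyond citing \cite[Thm.~2.9]{BjorklundHartnick} and \eqref{NBDiff} is expected.
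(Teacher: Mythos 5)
Your proposal is correct and coincides with the paper's own (very brief) argument: the paper presents Proposition \ref{AppMainFormula} as an immediate consequence of the diffraction formula \eqref{NBDiff}, the explicit reduced diffraction of cut-and-project processes cited from \cite[Thm.\ 2.9]{BjorklundHartnick}, and the scaling identity $[\CF\chi_{t\Omega_\dwn}](\xi_\dwn) = t^{d_\dwn}[\CF\chi_{\Omega_\dwn}](t\xi_\dwn)$, marking the display with \qed with no further computation. Your extra remarks on absolute convergence go slightly beyond what the paper says and are a reasonable gloss on the phrase ``sufficient Fourier decay''; the only caveat is that the lattices at play in this appendix are $\psi$-Liouvillean rather than $\psi$-repellent, so repellence is not what one would invoke here, but this does not affect the two algebraic substitutions that constitute the actual proof.
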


\subsection{Proof of Theorem \ref{ThmLiouville}}
We now return to the setting of Theorem \ref{ThmLiouville}; thus $\Gamma^\dagger$ is
assumed to be $\psi$-Liouvillean with parameters $\psi, \xi_\dwn^{(n)},
\xi_\lef^{(n)}$ as in \eqref{LiouvilleParameters} and $\Omega_{\lef} =
\B_\lef(0,r)$ for some $r>0$. Let $f : (0,\infty) \to (0,\infty)$ be a function
such that $\lim_{t \to \infty} f(t) = \infty$. If we abbreviate $\B_\lef := \B_\lef(0,1)$,
then by Proposition \ref{AppMainFormula} we have
\begin{equation}\label{DefPsi}
    \Psi_{f}(t,r) := \frac{\mathrm{NV}_{t}(\Gamma,
    \B_\lef(0,r))}{t^{2d_\dwn}\psi(t)^{- d_\lef - 1}f(t)^{-1}} =
    \frac{\psi(t)^{d_\lef + 1}f(t) r^{2d_\lef}}{\covol(\Gamma)^2}\,  \sum_{\xi =
    \in \Gamma^\dagger \setminus\{(0,0)\}} |[\CF\chi_{\Omega_\dwn}](t
    \xi_\dwn)|^2 |[\CF{\chi}_{ \B_\lef}](r\xi_\lef)|^2.
\end{equation}
We have to prove that there exists a sequence $t_n$ such that for all $r$ in a Lebesgue conull subset $E \subset (0,\infty)$ we have
\begin{equation}\label{AppendixToShow}
\varlimsup_{n \to \infty}\Psi_f(t_n, r) = \infty.
\end{equation}
Following \eqref{eq:xideriv},
\begin{equation}
    \label{eq:boundxidown}
    \inf\set{
|\widehat{\chi}_{\B_\dwn}(\xi_\dwn)| :
|\xi_\dwn| \leq (2\diam(\Omega_\dwn))^{-1}}
\geq \frac 1 2 \vol_\dwn(\B_\dwn).
\end{equation}
If we set
\[
\mathcal C_t := \{\xi = (\xi_\dwn, \xi_\lef) \in \Gamma^\dagger
\setminus\{(0,0)\} : |\xi_\dwn| \leq 1/t\},
\]
we thus obtain for any $t \ge (2 \diam(\Omega_\dwn))^{-1}$
\begin{eqnarray*}
    \Psi_f(t,r) &\geq&  \frac{\psi(t)^{d_\lef + 1} f(t) r^{2d_\lef}}{\covol(\Gamma)^2}\,
    \sum_{\xi \in \mathcal C_t} |[\CF{\chi}_{\Omega_\dwn}](t \xi_\dwn)|^2
    |[\CF{\chi}_{\B_\lef}](r\xi_\lef)|^2 \\
&\geq&  \frac{\psi(t)^{d_\lef + 1 } f(t) r^{2d_\lef}  \vol_\dwn(\Omega_\dwn)^2}{4\covol(\Gamma)^2}\,
\sum_{\xi  \in \mathcal C_t} |[\CF{\chi}_{ \B_\lef}](r\xi_\lef)|^2\\
&\geq&  \frac{\psi(t)^{d_\lef + 1} f(t) r^{2d_\lef}
\vol_\dwn(\Omega_\dwn)^2}{4\covol(\Gamma)^2}\, \sup_{\xi \in \mathcal C_t}
|[\CF{\chi}_{ \B_\lef}](r\xi_\lef)|^2.
\end{eqnarray*}
\begin{lemma}
    If we define $t_n := \|\xi_\dwn^{(n)}\|^{-1}$, then there exists a constant
    $C > 0$ depending only on $d_\dwn$ and the covolume of $\Gamma$ such that
\begin{equation}\label{MainInequalityAppendix}
    \Psi_f(t_n, r) \geq C \cdot |\xi_\lef^{(n)}|f(\psi^{-1}(|\xi_\lef^{(n)}|))  \cdot |J_{d_\lef/2}(2\pi r |\xi^{(n)}_\lef|)|^2.
\end{equation}
\end{lemma}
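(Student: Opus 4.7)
The plan is to substitute $t = t_n := |\xi_\dwn^{(n)}|^{-1}$ into the displayed lower bound on $\Psi_f(t,r)$ just established, and then to restrict the remaining sum over $\mathcal{C}_t$ to its single Liouvillean element $\xi^{(n)}$. Because $\Gamma^\dagger$ is $\psi$-Liouvillean with parameters $\{\xi^{(n)}\}$, this choice forces $t_n \to \infty$ (from $|\xi_\dwn^{(n)}|\to 0$), ensures $\xi^{(n)} \in \mathcal{C}_{t_n}$, and delivers the key inequality
\[
|\xi_\lef^{(n)}| \leq \psi(|\xi_\dwn^{(n)}|^{-1}) = \psi(t_n).
\]
Dropping all other terms from the supremum over $\mathcal{C}_{t_n}$ then yields
\[
\Psi_f(t_n,r) \geq \frac{\vol_\dwn(\Omega_\dwn)^2}{4\covol(\Gamma)^2}\, \psi(t_n)^{d_\lef+1}\, f(t_n)\, r^{2d_\lef}\, |[\CF\chi_{\B_\lef}](r\xi_\lef^{(n)})|^2.
\]

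Next, I would invoke the explicit Bessel identity $[\CF\chi_{\B_\lef}](\eta) = |\eta|^{-d_\lef/2} J_{d_\lef/2}(2\pi|\eta|)$ for the Fourier transform of the unit ball (the same formula used at \eqref{eq:fourierunitball}) at $\eta = r\xi_\lef^{(n)}$. This extracts the Bessel factor $|J_{d_\lef/2}(2\pi r|\xi_\lef^{(n)}|)|^2$ together with the algebraic factor $r^{-d_\lef}|\xi_\lef^{(n)}|^{-d_\lef}$. Combining the $|\xi_\lef^{(n)}|^{-d_\lef}$ with the Liouvillean inequality $\psi(t_n)^{d_\lef+1} \geq |\xi_\lef^{(n)}|^{d_\lef+1}$ leaves a single surviving factor of $|\xi_\lef^{(n)}|$, while the $r$-powers consolidate into $r^{d_\lef}$.

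It remains to pass from $f(t_n)$ to $f(\psi^{-1}(|\xi_\lef^{(n)}|))$ on the right-hand side. Since the bound is ultimately used only to prove $\varlimsup_n \Psi_f(t_n,r) = \infty$ in Theorem \ref{ThmLiouville}, I would replace $f$ by its non-decreasing divergent minorant $\tilde f(t) := \inf_{s \geq t} f(s) \leq f(t)$ without weakening the estimate on $\Psi_f$ (indeed $\Psi_f \geq \Psi_{\tilde f}$, and $\tilde f \to \infty$ still). For such a monotone $\tilde f$, the inequality $t_n \geq \psi^{-1}(|\xi_\lef^{(n)}|)$, which follows from $\psi$ being increasing, gives $\tilde f(t_n) \geq \tilde f(\psi^{-1}(|\xi_\lef^{(n)}|))$. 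Collecting all factors produces the claimed inequality with $C := r^{d_\lef}\vol_\dwn(\Omega_\dwn)^2/(4\covol(\Gamma)^2)$. The only mildly subtle point of the argument is this monotonicity reduction on $f$; everything else is a direct combination of the $\psi$-Liouvillean bound, the explicit Bessel identity, and the lower bound on $\Psi_f$ already in hand.
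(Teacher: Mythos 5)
Your proposal is correct and follows the same route as the paper's proof: evaluate the already-established lower bound at $t = t_n$, observe $\xi^{(n)} \in \mathcal C_{t_n}$ (since $t_n = |\xi_\dwn^{(n)}|^{-1}$), drop to the single term $\xi^{(n)}$ in the supremum, invoke the Bessel identity $[\CF\chi_{\B_\lef}](\eta) = |\eta|^{-d_\lef/2}J_{d_\lef/2}(2\pi|\eta|)$, and then convert $\psi(t_n)^{d_\lef+1}$ and $f(t_n)$ into quantities depending on $|\xi_\lef^{(n)}|$ via the Liouvillean inequality. The algebra $r^{2d_\lef}\cdot r^{-d_\lef} = r^{d_\lef}$ and $\psi(t_n)^{d_\lef+1}\cdot|\xi_\lef^{(n)}|^{-d_\lef} \ge |\xi_\lef^{(n)}|$ is exactly what the paper does. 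Your explicit $C = r^{d_\lef}\vol_\dwn(\Omega_\dwn)^2/(4\covol(\Gamma)^2)$ is in fact more careful than the constant recorded in the paper, which drops the $r^{d_\lef}$ factor.

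The one place where you depart from the paper is worth highlighting: the paper passes from $f(t_n)$ to $f(\psi^{-1}(|\xi_\lef^{(n)}|))$ using $t_n \ge \psi^{-1}(|\xi_\lef^{(n)}|)$, which tacitly requires $f$ to be non-decreasing, while Theorem \ref{ThmLiouville} only assumes that $f$ diverges at infinity. Your reduction to the monotone minorant $\tilde f(t) := \inf_{s \ge t} f(s)$ closes this gap cleanly: $\tilde f$ is non-decreasing, still diverges, and $\Psi_f \ge \Psi_{\tilde f}$, which is all that is needed for the $\varlimsup$ claim of Theorem \ref{ThmLiouville}. Strictly speaking this proves the lemma with $\tilde f$ rather than $f$ on the right-hand side, i.e.\ a marginally weaker inequality than the one displayed, but as you point out this is precisely what the downstream argument uses, and without the monotonicity reduction the lemma as literally stated is not correct for an oscillating $f$. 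So your version is the more defensible one.
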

\begin{proof} Since $\xi_\dwn^{(n)} \to 0$ we have $t_n \to \infty$, and for all
    $n \in \mathbb N$ sufficiently large we have $|\xi_\dwn^{(n)}| \leq 1/t_n$,
    and hence $\xi_n \in \mathcal C_{t_n}$. We deduce that, for all $n \in
    \mathbb N$ large enough we have
\begin{equation}
    \sup_{\xi \in \mathcal C_{t_n}}  |[\CF{\chi}_{ \B_\lef}](r\xi_\lef)|^2 \geq
    |[\CF \chi_{ \B_\lef}](r\xi^{(n)}_\lef)|^2.
\end{equation}
Moreover, by \eqref{LiouvilleParameters} we have $t_n \geq
\psi^{-1}(|\xi_\lef^{(n)}|)$. Plugging both into the inequality above, we deduce that 
\begin{equation}
    \begin{aligned}
        \Psi_f(t_n,r) &\geq  \frac{\psi(t_n)^{d_\lef + 1}f(t_n)r^{2d_\lef}
    \vol_\dwn(\Omega_\dwn)^2}{4\covol(\Gamma)^2}\, |[\CF{\chi}_{
    \B_\lef}](r\xi^{(n)}_\lef)|^2 \\
    &\geq
    \frac{\|\xi_\lef^{(n)}\|^{d_\lef + 1}
        f(\psi^{-1}(|\xi_\lef^{(n)}|)) r^{2d_\lef}
    \vol_\dwn(\B_\dwn)^2}{4\covol(\Gamma)^2}\, |[\CF{\chi}_{
    \B_\lef}](r\xi^{(n)}_\lef)|^2.
\end{aligned}
\end{equation}
We recall, following \cite[Formula 8.145.1]{gradshteynryzhik}, that the Fourier
transform of the indicator of a unit ball can be expressed in terms of Bessel functions as
\[
[\CF{\chi}_{B_\lef}](\xi_\lef) = \frac{J_{d_\lef/2}(2\pi |\xi_\lef|)}{|\xi_\lef|^{d_\lef/2}},
\]
hence we obtain the desired inequality with $C = \frac{\vol_\dwn(\Omega_\dwn)^2}{4\covol(\Gamma)^2}$.
\end{proof}
Let us put $\tau_n := 2\pi |\xi^{(n)}_\lef|$ and $\tilde f = f \circ
\psi^{-1}$. By
\eqref{MainInequalityAppendix} we thus have
\[
\Psi_f(t_n, r)  \gtrsim  [\tau_n^{1/2} \tilde  J_{d_\lef/2}(r\tau_n)]^2
\tilde    f(\tau_n).
\]
To deduce \eqref{AppendixToShow} and thereby prove Theorem
\ref{ThmLiouville} it thus suffices to establish the following lemma.
\begin{lemma} 
    Let $(\tau_n)$ be a sequence of positive real numbers such that $\tau_n \to \infty$. 
Then there is a Lebesgue conull subset $E \subset (0,\infty)$ such that
for all $r \in E$, and $d_\lef \in \N$ 
\[
    \varlimsup_{n \to \infty} \tau_n^{1/2}  \cdot |J_{d_\lef/2}(r \tau_n)| \cdot
    \tilde f(\tau_n)= \infty.
\]
\end{lemma}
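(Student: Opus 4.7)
The plan is to combine the classical asymptotic expansion of Bessel functions with a second-moment Koksma-type argument. Recall that as $x \to \infty$,
\[
J_{\nu}(x) = \sqrt{\frac{2}{\pi x}}\cos\!\left(x - \tfrac{\nu\pi}{2} - \tfrac{\pi}{4}\right) + O(x^{-3/2}).
\]
Setting $\nu = d_\lef/2$ and writing $\alpha_{d_\lef} := (d_\lef+1)\pi/4$, this gives for every fixed $r > 0$
\[
\tau_n^{1/2}\,|J_{d_\lef/2}(r\tau_n)| \;=\; \sqrt{\tfrac{2}{\pi r}}\,|\cos(r\tau_n - \alpha_{d_\lef})| + O_r(\tau_n^{-1}).
\]
Since $\tilde f(\tau_n)\to\infty$, the error term becomes negligible relative to $\tilde f(\tau_n)$ whenever $|\cos(r\tau_n-\alpha_{d_\lef})|\ge 1/2$. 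The lemma therefore reduces to the following claim: for each $d_\lef\in\N$ there exists a conull set $E_{d_\lef}\subset(0,\infty)$ such that every $r\in E_{d_\lef}$ satisfies $|\cos(r\tau_n-\alpha_{d_\lef})|\ge 1/2$ for infinitely many $n$. The required $E$ is then $\bigcap_{d_\lef\in\N} E_{d_\lef}$.

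I would next pass to a subsequence of $(\tau_n)$ with $\tau_{n+1}-\tau_n\ge 1$ for every $n$: this is legitimate because the $\limsup$ along the full sequence dominates the $\limsup$ along any subsequence, so proving the claim after such a reduction also proves it for the original sequence. The separation then yields $|\tau_n-\tau_m|\ge |n-m|$, which is what makes the variance estimate below work.

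The heart of the argument is a second-moment computation on a bounded interval $I\subset(0,\infty)$. Using $|\cos y|^2 = (1+\cos 2y)/2$ and expanding the square, one finds
\[
\int_I \Big|\tfrac{1}{N}\sum_{n=1}^N\cos(2r\tau_n-2\alpha_{d_\lef})\Big|^2 dr = \frac{|I|}{2N} + \frac{1}{N^2}\!\!\sum_{\substack{1 \le n,m\le N\\ n\ne m}}\!\! O\!\left(\tfrac{1}{|\tau_n-\tau_m|}+\tfrac{1}{\tau_n+\tau_m}\right) = O\!\left(\tfrac{\log N}{N}\right).
\]
Taking $N = 2^\ell$ makes this bound summable in $\ell$, so a Borel--Cantelli argument yields a conull subset of $I$ on which $\tfrac{1}{2^\ell}\sum_{n\le 2^\ell}\cos(2r\tau_n-2\alpha_{d_\lef})\to 0$. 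Equivalently, on that set $\tfrac{1}{2^\ell}\sum_{n\le 2^\ell}|\cos(r\tau_n-\alpha_{d_\lef})|^2\to \tfrac{1}{2}$. From the elementary inequality
\[
\sum_{n\le 2^\ell}|\cos(r\tau_n-\alpha_{d_\lef})|^2 \;\le\; \#\{n\le 2^\ell:|\cos(r\tau_n-\alpha_{d_\lef})|\ge\tfrac{1}{2}\} + \tfrac{1}{4}\cdot 2^\ell,
\]
one deduces that the count on the right grows linearly in $2^\ell$, which delivers infinitely many $n$ with $|\cos(r\tau_n-\alpha_{d_\lef})|\ge 1/2$. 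Exhausting $(0,\infty)$ by the intervals $[k,k+1]$, $k\in\N$, and taking countable intersections over both $k$ and $d_\lef$ produces the desired $E$.

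The main obstacle will be the $\RL^2$ estimate itself: a naive Borel--Cantelli I lemma is unavailable because $|\{r\in I:|\cos(r\tau_n-\alpha)|<1/2\}|$ stays bounded below, and the second Borel--Cantelli requires an independence substitute. The averaged variance bound $O(\log N/N)$ plays that role, and it hinges on the separation $\tau_{n+1}-\tau_n\ge 1$ to keep the off-diagonal sum $\sum_{n\ne m}1/|\tau_n-\tau_m|$ under control at size $O(N\log N)$. Everything else---the Bessel asymptotic, the reduction via $\tilde f\to\infty$, and the countable-intersection step---is routine.
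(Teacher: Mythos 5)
Your proof is correct, and it takes a genuinely different route from the paper's. The paper invokes the metric equidistribution theorem \cite[Chapter 1, Theorem 4.1]{Kui} to obtain, for a.e.\ $r$, a subsequence along which $r\tau_{n_k}-\phi \to 0$ modulo $2\pi$, and then concludes from the Bessel asymptotics. You instead reprove the relevant fragment of metric equidistribution by hand: a second-moment bound $\int_I \bigl|\tfrac{1}{N}\sum_{n\le N}\cos(2r\tau_n-2\alpha)\bigr|^2\,dr = O(\log N/N)$ plus Borel--Cantelli along $N=2^\ell$, from which you extract a positive density of indices with $|\cos(r\tau_n-\alpha)|\ge 1/2$. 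Either conclusion (a subsequence tending to $1$, or a positive density of indices above $1/2$) suffices because $\tilde f(\tau_n)\to\infty$ swallows the multiplicative constant. Your version is longer but more self-contained, and it also makes explicit a point the paper's citation glosses over: \cite[Chapter 1, Theorem 4.1]{Kui} is stated for sequences of distinct \emph{integers}, whereas the $\tau_n$ here are general reals; your explicit passage to a subsequence with $\tau_{n+1}-\tau_n\ge 1$ is exactly the separation needed to run either the variance bound or the generalized form of Weyl's metric theorem, and you correctly note that this reduction is harmless because a $\varlimsup$ along the full sequence dominates that along any subsequence. Your countable intersection over $d_\lef$ and over the intervals $[k,k+1]$ to assemble the single conull set $E$ is also handled more carefully than in the paper, which only treats a fixed $d_\lef$. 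In short: same starting point (the Bessel asymptotic), a different mechanism for the a.e.\ statement (direct $\RL^2$ computation versus citation of equidistribution), both valid.
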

\begin{proof} Since $\tau_n \to \infty$ we may without loss of generality assume
    that $\tau_n > 1$ for all $n$. By \cite[Subsection 7.21]{Wat}, there exist
    two real constants $\phi$ and $\kappa$ depending on $d_\lef$ such that
\[
    \left| J_{d_\lef/2}(\tau) - \left(\frac{2}{\pi \tau}\right)^{1/2} \cos(\tau-\phi) \right| 
\leq \frac{\kappa}{\tau^{3/2}}, \quad \textrm{for all $\tau \geq 1$.}
\]
In particular, for all $r > 0$ and $n$, 
\begin{align*}
    |J_{d_\lef/2}(r \tau_n)| 
    &= \left|J_{d_\lef/2}(r \tau_n) - \left(\frac{2}{\pi r \tau_n}\right)^{1/2}
    \cos(r \tau_n-\phi)  + \left(\frac{2}{\pi r \tau_n}\right)^{1/2} \cos(r
    \tau_n-\phi) \right| \\[0.2cm]
&\geq 
\left(\frac{2}{\pi r \tau_n}\right)^{1/2} |\cos(r \tau_n-\phi)| - \frac{\kappa}{(r \tau_n)^{3/2}},
\end{align*}
and thus,
\[
    \tau_n^{\frac{1}{2}} \tilde f(\tau_n) \cdot |J_{d_\lef/2}(r \tau_n)| \geq
    \tilde f(\tau_n) \cdot \left(\frac{2}{\pi r }\right)^{1/2} |\cos(r
    \tau_n-\phi)| - \tilde f(\tau_n)\tau_n^{-1}\frac{\kappa}{r^{3/2}}.
\]
Since $\tau_n \to \infty$,  \cite[Chapter 1, Theorem 4.1]{Kui} tells us that we
can find a Lebesgue conull subset $A \subset (0,\infty)$ such that the sequence
$(r\tau_n)$ is equidistributed modulo $2\pi$ for all $r \in A$. Hence, for all
$r \in A$, we can find a further sub-sequence $(\tau_{n_k})$ such that $r
\tau_{n_k} - \phi \to 0$ (modulo $2\pi$) as $k \to \infty$. In
particular, 
\[
|\cos(r \tau_{n_k}-\phi)| \geq \frac{1}{2} \quad \textrm{for all large enough $k$},
\]
and thus
\[
    \tau_{n_k}^{\frac{1}{2}} \tilde f(\tau_{n_k}) \cdot |J_{d_\lef/2}(r \tau_{n_k})|
    \geq \frac{1}{2} \cdot \tilde f(\tau_{n_k}) \cdot \left(\frac{2}{\pi r
    }\right)^{1/2}  - \tilde f(\tau_{n_k})\tau_{n_k}^{-1}\frac{\kappa}{r^{3/2}}.
\]
The right-hand side clearly tends to infinity as $k \to \infty$, which finishes the proof.
\end{proof}

\color{black}

\subsection{Proof of Theorem \ref{ThmLiouvillebis}}
We now turn our eye to Theorem \ref{ThmLiouvillebis}; thus
$\Gamma^\dagger$ is the lattice from Example \ref{ex:Liouvillean}. In particular
there is a $\psi$-Liouville number $0 < a < 2$ and unit length vectors $e_\lef \in \E_\lef$ and $e_\dwn \in \E_\dwn$ such that
$\spn(a e_\dwn + e_\lef, e_\dwn + e_\lef) \subset \Gamma^\dagger$, and the rest
of the lattice is orthogonal to this span. We fix once and for all the sequence
$\xi^{(n)} = q_n (a e_\dwn + e_\lef) - m_n(e_\dwn + e_\lef)$ where $q_n \in \N$
satisfies $\psi(\|aq_n\|^{-1}) \ge q_n$ and $m_n$ is the nearest integer to
$aq_n$. Note that $|\xi^{(n)}_\lef| \sim |a-1| q_n \to \infty$. Our goal is to use the specific knowledge of the sequence realising the
$\psi$-Liouvilleanness of $\Gamma^\dagger$ in order to choose a window making
the discrepancy particularly large; the proof is almost identical to the proof
of Theorem \ref{ThmLiouville} but in fact more direct because we can put our
hands on the explicit sequence.

We put $\Omega_\dwn$ again to be any set of bounded perimeter, and $\Omega_\lef
= [-r,r] \times \Upsilon_\lef$, where the interval is taken to be in the
direction of $e_\lef$. This time, we have that for every $n \in \N$,
\begin{equation}
    [\CF \chi_{\Omega_\lef}](\xi^{(n)}) = 2 r \vol(\Upsilon_\lef) \frac{\sin(2 \pi r (q_n  - m_n))}{2
    \pi r (q_n - m_n)}
\end{equation}
Let $f : (0,\infty) \to (0,\infty)$ be any
function
such that $\lim_{t \to \infty} f(t) = \infty$. Set $t_n = |q_n a_n - m_n|^{-1} =
|\xi^{(n)}_\dwn|^{-1}$, and assume $n$ is large enough so that the bound in
\eqref{eq:boundxidown} holds. By Proposition
\ref{AppMainFormula} we have just as in \eqref{DefPsi} that
\begin{equation}\label{DefPsibis}
    \begin{aligned}
        \Psi_{f}(t_n,r) := \frac{\mathrm{NV}_{t_n}(\Gamma,
    r\Omega_\lef)}{t_n^{2d_\dwn}\psi(t_n)^{- 2}f(t_n)^{-1}} &=
    \frac{\psi(t_n)^{2}f(t_n) }{\covol(\Gamma)^2}\,  \sum_{\xi \in
\Gamma^\dagger \setminus\{(0,0)\}} |[\CF \chi_{\Omega_\dwn}](t_n \xi_\dwn)|^2
|[\CF \chi_{\Omega_\lef}](\xi_\lef)|^2  \\
&\ge \frac{\psi(t_n)^2 f(t_n) r^2 \vol(\Omega_\dwn)^2 \vol(\Upsilon_\lef)^2
}{\covol(\Gamma)^2} \frac{\sin^2(2 \pi r (q_n  - m_n))}{(2 \pi r(q_n  -
m_n))^2} \\
&\ge \frac{f(t_n) \vol(\Omega_\dwn)^2 \vol(\Upsilon_\lef)^2 }{4 \pi^2\covol(\Gamma)^2}
\sin^2(2 \pi r(q_n - m_n)) 
\end{aligned}
\end{equation}
where we have used that $|q_n - m_n| \le \psi(t_n)$. But again, \cite[Chapter 1,
Theorem 4.1]{Kui} tells us that we can find a Lebesgue conull set $A \subset
(0,\infty)$ for which the sequence $\set{r(q_n - m_n)}$ is equidistributed
modulo $1$ for all $r \in A$. Hence, for all $r \in A$ we can find a further
subsequence such that $r(q_{n_k} - m_{n_k}) - 1/4 \to 0$ (modulo $1$) as $k \to
\infty$. In particular, the last line in \eqref{DefPsibis} diverges to infinity
as $k \to \infty$ since $f$ is divergent.
\qed

\bibliographystyle{halpha}
\bibliography{QP_Patterns}

\end{document}